\DeclareMathOperator{\sgn}{sgn}
\newcommand{\E}{\mathbb{E}}
\newcommand{\K}{\mathbb{K}}
\newcommand{\R}{\mathbb{R}}
\newcommand{\Abs}[1]{\left|#1\right|}
\newcommand{\Indi}[1]{\mathbbm{1}_{#1}}
\newcommand*{\barfix}[2][.175ex]{%
  \mathpalette{\@barfix{#1}}{#2}%
}
\newcommand*{\@barfix}[3]{%
  \vbox{%
    \kern#1\relax
    \hbox{$#2#3\m@th$}%
  }%
}
\newcommand{\modar}{\color{black}}
\newcommand{\modarn}{\color{black}}
\newcommand{\abs}[1]{\left|#1\right|}
\newcommand{\norm}[1]{\left\lVert#1\right\rVert}
\newcommand{\bracket}[2]{<#1,#2>}
\newcommand{\bracketIto}[2]{{\left[#1,#2\right]}}
\newtheorem{remark}{Remark}
\newtheorem{theorem}{Theorem}
\newtheorem{lemma}{Lemma}
\newtheorem{proposition}{Proposition}
\newtheorem{definition}{Definition}
\newcommand{\keywords}[1]{\par\addvspace\baselineskip
\noindent\enspace\ignorespaces#1}
\newcommand{\revar}{\color{black}}
\newcommand{\revarn}{\color{black}}
\newcommand{\revnotat}{\color{black}}
\newcommand{\modc}{\color{black}}
\newcommand{\modchi}{\color{black}}
\newcommand{\gaussian}{\color{black}\mathfrak{g}}
\newcommand{\modrev}{\color{black}}
\newcommand{\modr}{\color{black}}
\begin{document}


\title{Malliavin calculus for the optimal estimation of the invariant density of discretely observed diffusions in intermediate regime.}
\author{Chiara Amorino\thanks{ Universit\'e du Luxembourg, L-4364 Esch-Sur-Alzette, Luxembourg.The author gratefully acknowledges financial support of ERC Consolidator Grant 815703 “STAMFORD: Statistical Methods for High Dimensional Diffusions”.} \qquad Arnaud Gloter \thanks{Laboratoire de Math\'ematiques et Mod\'elisation d'Evry, CNRS, Univ Evry, Universit\'e Paris-Saclay, 91037, Evry, France.}} 

\maketitle

\begin{abstract}
Let $(X_t)_{t \ge 0}$ be solution of a one-dimensional stochastic
	differential equation. {\modarn Assuming that a discrete observation of the process $(X_t)_{t \in [0, T]}$ is available, when $T$ tends to $\infty$, {\modrev our aim is to study the convergence rate for the estimation of the invariant density in cases where the effect of sampling is non-negligible. This scenario is referred to as the 'intermediate regime'.}}	
We find the convergence rates associated to the kernel density estimator we proposed and a condition on the discretization step $\Delta_n$ which plays the role of threshold between the intermediate regime and the continuous case. In intermediate regime the convergence rate is $n^{- \frac{2 \beta}{2 \beta + 1}}$, where $\beta$ is the smoothness of the invariant density. After that, we complement the upper bounds previously found with a lower bound over the set of all the possible estimator, which provides the same convergence rate: it means it is not possible to propose a different estimator which achieves better convergence rates. This is obtained by the two {\modr hypotheses} method; the most challenging part consists in bounding the Hellinger distance between the laws of the two models. The key point is a Malliavin representation for a score function, which allows us to bound the Hellinger distance through a quantity depending on the Malliavin weight. 
\end{abstract}

\keywords{Class MSC. Primary: 62G07, 62G20. Secondary: 60J60 \\
Non-parametric estimation, Malliavin calculus, invariant density, discrete observations, convergence rate, local time, ergodic diffusion.}


\section{Introduction}

In this work, we consider the process $(X_t)_{t \ge 0}$, solution to the following stochastic differential equation:
\begin{equation}{\label{eq: model intro}}
X_t= X_0 + \int_0^t b( X_s)ds + \int_0^t a(X_s){\modarn dB_s,}
\end{equation}
where {\modarn $B$} is a one dimensional Brownian motion. Starting from the discrete observation of the process at the times $0 = t_0^n \le \dots \le t_n^n = T$, with $T \rightarrow \infty$, we aim at {\modchi discussing the effect of the discretization of the observations on the optimal rate of convergence for the nonparametric estimation of the invariant density.}

The field of nonparametric statistics for diffusion processes has become
more and more relevant, in statistics. Due to their practical relevance as standard models in many areas of applied science such as genetics, meteorology or financial mathematics, the statistical analysis of stochastic differential equations receives nowadays special attention. \\
Inference for stochastic differential equations (SDEs) based on
the observation of sample paths on a time interval [0, T] has already been widely investigated in several different context.
Moreover, these works have opened the field of inference for more complex stochastic
differential equations such as diffusions with jumps, diffusions with mixed effects or McKean-Vlasov diffuson models; to name a few.

Regarding the issue of nonparametric
invariant density {\revarn estimation for stochastic processes, contributions in this context include \cite{Cas_Lea}, \cite{Bosq98}; more recently \cite{AmoNua} and \cite{DexStraTro} starting from the observation of diffusion with jumps; \cite{Pan} and \cite{ComMar} for diffusions driven by a fractional Brownian motion, \cite{HofMae} for interacting particles system, \cite{MarRos} for iid copies of diffusions as in \eqref{eq: model intro}.}
The nonparametric estimation of the invariant density starting from the observation of a stochastic differential equation {\revarn in an anisotropic context} has recently been studied in \cite{Strauch} and \cite{Minimax} assuming that the continuous record of the process is available. 
Closer to the purpose of this work,
\cite{disc} deals with the estimation of the invariant density from the discrete observation of a stochastic differential equation as in \eqref{eq: model intro}, for $d \ge 2$. In particular, the results in \cite{disc} provide a condition on the discretization step needed in order to recover the continuous convergence rates, (which are the convergence rate achieved in the case where the continuous trajectory of the process is available). Moreover, Theorems 2 and 3 in \cite{disc} state that, in the case where such a condition is not satisfied, the kernel density estimator achieves the convergence rate $(\frac{1}{n})^\frac{2 \bar{\beta}}{2 \bar{\beta} + d}$, where $\bar{\beta}$ is the harmonic mean smoothness of the invariant density we want to estimate and $d$ the dimension of the process. {\revarn Up to our knowledge, few results deals with the optimality in the context of a discrete sampling. A notable exception is \cite{But}, where
the authors 
obtains lower bounds for the
estimation of the invariant density associated to the same process as in \eqref{eq: model intro} starting from the observation of $X_\Delta,\dots , X_{n\Delta}$ for $n \rightarrow \infty$ where  $\Delta=1$ is fixed. However, their approach heavily relies on the fact the discretization step does not go to zero. For example their observation can be replaced by some independent random variable while in our context $\Delta=\Delta_n$ goes to $0$ and so the data are definitively far from being independent. Hence, the technique proposed in \cite{But} can not work in our framework and some other methods need to be introduced. \\}

{\modrev 
	{\revarn In this paper we identify, in dimension $1$, the conditions on the sampling step that delineate between two regime for the estimation rate. Under a regime where the sampling step  goes to $0$ fast enough, it is possible to estimate the stationary distribution with the same rate as in the continuous observation case.}
		We introduce the term 'intermediate regime' to refer to the high-frequency case where the discretization step $\Delta_n$ approaches 0, but not rapidly enough to achieve the same convergence rate as continuous observation. {\revarn Precisely,} 
		we define the 'intermediate regime' as the scenario where the discretization step $\Delta_n := t_{i + 1} - t_i$ is greater than $(\frac{1}{T_n})^\frac{1}{2 \beta}$ (according with Theorem \ref{th: discrete d =1} below). In this context, we observe that the kernel density estimators achieve a convergence rate of $(\frac{1}{n})^\frac{2 {\beta}}{2 {\beta} + 1}$, which aligns with the findings presented in \cite{disc}.} We complement this upper bound with the corresponding lower bound, demonstrating that the infimum over all possible estimators of the pointwise $L^2$ error is always larger than $(\frac{1}{n})^\frac{2 {\beta}}{2 {\beta} + 1}$. This finding implies that the kernel density estimator we propose achieves the best possible convergence rate in the intermediate regime.

{\modrev The lower bound presented in Theorem \ref{th: borne inf discrete} represents the main result of this work and is based on the two {\modr hypotheses} method. In particular, we introduce two models, denoted as $X$ and $\Tilde{X}$, sharing the same drift but with different diffusion coefficients. It is worth noting that constructing the two hypotheses by disturbing the diffusion coefficient, instead of the drift, might seem unusual to readers familiar with lower bound proofs. However, we have chosen this approach because, even in the case of parameter estimation, the drift coefficient is estimated at a rate of $\sqrt{T}$, which hinders the ability to perceive the dependence on the discretization step. As our results confirm, perturbing the diffusion coefficient proves to be the appropriate strategy in this context, enabling the observation of the dependence on the discretization step.} \\
In order to evaluate the total variation distance between these two models we use an interpolation argument which leads us to the introduction of the process $X^\epsilon$ for $\epsilon \in [0, 1]$ (see its definition in \eqref{eq: model epsilon}). Then, the main proof of the lower bound is reduced to the research of a bound for
{\revarn the Fisher information $\E[(\frac{\dot{p}^\epsilon_{\Delta_n}(x_0,X^\epsilon_{\Delta_n})}{p^\epsilon_{\Delta_n}(x_0,X^\epsilon_{\Delta_n})})^2]$,} where $p^\epsilon_{\Delta_n}(x_0,y)$ is the transition density of the process $X^\epsilon$ starting in $x_0$ and arriving in $y$ after a time $\Delta_n$ and $\dot{p}^\epsilon_{\Delta_n}(x_0,y)$ its derivative with respect to $\epsilon$. 

The central aspect of our proof relies on a Malliavin representation for the quantity $\frac{\dot{p}^\epsilon_{\Delta_n}(x_0,y)}{p^\epsilon_{\Delta_n}(x_0,y)}$. This approach mirrors the one used to establish the LAMN (Local Asymptotic Mixed Normality) property of the process. \\
Regarding the literature concerning the LAMN property, it was initially proven for a statistical model of one-dimensional diffusion processes with synchronous, equispaced observations by Dohnal \cite{9 Ogi}. Later, the results were extended to multidimensional diffusions by Gobet \cite{Gobet LAMN}, who utilized a Malliavin calculus approach. Subsequently, in \cite{11 Ogi}, Gobet demonstrated the LAN (Local Asymptotic Normality) property for ergodic diffusion processes as T goes to infinity. This was further extended to the case of nonsynchronously observed diffusion processes in \cite{Ogi}.

In particular, our methodology consists in estimating the local Hellinger distance at time $\Delta_n$ and then conclude by {\modarn tensorization.} Using Malliavin calculus, we can then bound the Hellinger distance by a quantity depending on the Malliavin weight. The approach we propose in this part is close to the one presented in \cite{Clement}. However, in \cite{Clement} the author can bound the Hellinger distance by the $L^2$ norm of the Malliavin weight, while in our case it appears to be not enough. {\modchi A} challenge, in our paper, consists indeed in proposing a sharp bound for the conditional expectation of the Malliavin weight (see Proposition \ref{prop: bound Malliavin} and Lemma \ref{l: main term malliavin} below).

This is achieved by obtaining some occupation formulas and some upper bounds for the conditional first moment for integrals of the local time. \\
The estimation of occupation time functionals is a well-studied topic in the literature: it appears
in the study of numerical approximation schemes for stochastic differential equations (\cite{GobLab08}, \cite{22 Alt}, \cite{25 Alt}) and in
the analysis of statistical methods for stochastic processes (\cite{5 Alt}, \cite{9 Alt} , \cite{17 Alt}). Furthermore, their smoothness properties play an important role for solving ordinary differential equations, for example in combination with the phenomenon of regularization by noise (see for example \cite{Gub}). Some estimations for occupation time functionals of stationary Markov processes can be found in \cite{AltCho}, while \cite{Alt} applies also to non-Markovian processes. 

 Denoting as $\Tilde{B}$ a Brownian motion and as $(L_t^z(\Tilde{B}))_t$ the local time at level $z$ of the Brownian motion $\Tilde{B}$, we show  in Lemma \ref{L: majo func Local time cond Delta} some upper bounds of {\modrev $\E_y[\int_0^{\Delta_n} \frac{d L_s^z(\Tilde{B})}{s^\gamma} | \Tilde{B}_{\Delta_n} = x]$ for $\gamma \ge 0$, where $\E_y[\cdot]$ is the conditional expectation given $\Tilde{B}_0 = y$. In particular, we deduce some controls which extend the results in \cite{Gradinaru_et_al_1999}, where this quantity has been studied in detail for $\gamma \le \frac{1}{2}$. We remark that, for $\gamma= 0$, the conditional expectation above turns out being the expectation of the local time for the Brownian bridge, which has been intensively studied in \cite{Pitman}.}\\
{\modr Let us introduce the notation $\E_{(a,b)}$ for the expectation under the law of $(X_t)_{t\in[0,T_n]}$,  stationary solution of \eqref{eq: model intro}. Then, we} are able to prove that,
when {\modrev $\Delta_n > (\frac{1}{T_n})^\frac{1}{2 \beta}$}, the following lower bound holds true:
\begin{equation}
\inf_{\tilde{\pi}_{T_n}} \sup_{(a,b) \in \Sigma} {\modr \E_{(a,b)}}[(\tilde{\pi}_{T_n}(x^*) - \pi(x^*))^2] \ge c (\frac{1}{n})^{\frac{2 \beta}{2 \beta + 1}},
\label{eq: lower intro}
\end{equation}
where the infimum is taken {\modrev over all estimators} of the invariant density based on  $X_{0}, X_{\Delta_n},\dots, X_{n \Delta_n}$. {\modr By estimator we mean any real-valued random variable given as a measurable function of $X_{0}$, $X_{\Delta_n}$, \dots , $X_{n \Delta_n}$.}
Here above $\Sigma$ is a class of coefficients for which the stationary density has some prescribed regularity and $\beta$ is the smoothness of the invariant density $\pi$. \\
The lower bound in \eqref{eq: lower intro} complements the upper bounds we show in our Theorem \ref{th: discrete d =1}:
{\modrev \begin{equation}
		\label{eq: borne sup intro}
\sup_{(a,b)\in \Sigma} {\modr \E_{(a,b)}} [|\hat{\pi}_{h,n}(x^*) - \pi (x^*)|^2] \underset{\sim}{<}
\begin{cases}
 \frac{1}{T_n} \qquad \mbox{if } \Delta_n \le (\frac{1}{T_n})^\frac{1}{2 \beta}, \\
n^{- \frac{2{\beta}}{2{\beta}+ 1}}  \qquad \mbox{if } \Delta_n > (\frac{1}{T_n})^\frac{1}{2 \beta},
\end{cases}
\end{equation} }
where $\hat{\pi}_{h,n}(x)$ is the kernel density {\revarn estimator.} 
\\
{\modrev We observe that the convergence rate recovered above for $\Delta_n \le (\frac{1}{T_n})^\frac{1}{2 \beta}$ is the superoptimal rate $\frac{1}{T_n}$, which is the optimal rate when the continuous trajectory of the process is available and has already been deeply studied in the literature (see for example \cite{Kut_2004}). Here we also study in detail what happens in the intermediate regime, which is completely new.} \\
We remark that the convergence rate we found in the intermediate regime is the same as for the estimation of a probability
density belonging to an H\"older class, associated to $n$ iid random variables $X_1, \dots , X_n$. 
{ To summarize our finding, the results \eqref{eq: lower intro}--\eqref{eq: borne sup intro} show that the optimal rate of estimation for $\pi(x^*)$ is the slowest rate between the super-efficient rate $1/T_n$ and the classical non parametric one $n^{-\frac{2\beta}{1+2\beta}}$.  The condition $\Delta_n = \left(1 / T_n\right)^{1 /(2 \beta)}$ is the critical value for which these two rates are equal, and defines the frontier between the two regimes. \\
{\modr While our findings are confined to dimension $1$, it prompts curiosity about the potential existence of a similar dichotomy in higher dimensions. A comparison with Theorems 2 and 3 in \cite{disc} reveals a similar dichotomy in the upper bounds, even though the critical values exhibit different structures depending on the dimension. On the contrary, establishing a lower bound becomes notably more intricate in higher dimensions. In Remark \ref{R: extension}, we discuss the additional challenges that must be addressed to extend the lower bound established in Theorem \ref{th: borne inf discrete} to higher dimensions.}}

The outline of the paper is the following. In Section \ref{s: model} we introduce the model and we list the assumptions we will need in the sequel, while Section \ref{S: results} is devoted to the construction of the estimator and the statement of our main results.
In {\revarn Sections \ref{Ss : proof var upper bound}--\ref{Ss : proof Th upper bound}, we give the proof of the upper bound \eqref{eq: borne sup intro}, while in Section \ref{Ss : proof lower bound}, we construct the two {\modr hypotheses} setting and deduce the lower bound \eqref{eq: lower intro}. The Section \ref{S: study of Malliavin weight} is devoted to the proof of the main control on the Malliavin weight
used in the representation of the Fisher information.}
The proof of the technical results is delegated to the {\revar Appendix \ref{s: technical}, while Appendix \ref{S: Appendix Malliavin} is devoted to an introduction of Malliavin calculus, presenting some helpful tools used along the manuscript.}

\section{Model Assumptions}{\label{s: model}}

We aim at proposing a non-parametric estimator for the invariant density associated to a mono-dimensional diffusion process $X$.
The diffusion is a strong solution of the following stochastic differential equation:
\begin{equation}
X_t= X_0 + \int_0^t b( X_s)ds + \int_0^t a(X_s){\modarn dB_s,} \quad t \in [0,T], 
\label{eq: model}
\end{equation}
where $b : \mathbb{R} \rightarrow \mathbb{R}$, $a : \mathbb{R} \rightarrow \mathbb{R}$ and {\modarn $B = (B_t, t \ge 0)$} is a standard Brownian motion. The initial condition $X_0$ and {\modarn $B$} are independent. 

{\revarn 
	\textbf{A1}: \textit{The functions $b(x)$ and $a(x)$ are globally Lipschitz functions of class $\mathcal{C}^{3}$. For all $x\in\mathbb{R}$ the following hold true for the functions $a$, $b$ and their first three derivatives :
		\begin{equation*} |b(0)|\le b_0,~ |a(x)|\le a_0,~| b^{(l)}(x)|\le b_l, ~ |a^{(l)}(x)|\le a_l, \text{ with $l=1,2,3$,}
		\end{equation*}		
	  for some positive constants $(a_l)_{0\le l \le 3}$, $(b_l)_{0\le l \le 3}$.
		%
		Furthermore, for some $a_{\text{min}} > 0$,
		$$a_{\text{min}}^2 \le a^2(x). $$
}}
\textbf{A2 (Drift condition) }: \textit{ \\
	There exist {\revarn $\tilde{C} > 0$} and {\revarn $\tilde{\rho} > 0$} such that $x b(x), \le {\revarn -\tilde{C} |x|}$, $\forall x : |x| \ge {\revarn \tilde{\rho}}$.
} 
\\
Under the assumptions A1 - A2 the process $X$ admits a unique invariant distribution $\mu$ and the ergodic theorem holds {\revarn (see Theorem 1.4 in \cite{Kut})}. We suppose that the invariant probability measure $\mu$ of $X$ is absolutely continuous with respect to the Lebesgue measure and from now on we will denote its density as $\pi$: $ d\mu = \pi dx$. 
\\
We want to estimated the invariant density $\pi$ belonging to the H{\"o}lder class $\mathcal{H} (\beta, \mathcal{L})$ defined below.
\begin{definition}
Let $\beta > 0$, $\mathcal{L} > 0$. A function $g : \mathbb{R} \rightarrow \mathbb{R}$ is said to belong to the {\revar H{\"o}lder} class $\mathcal{H} (\beta, \mathcal{L})$ of functions if,
$$\left \|  g^{(k)} \right \|_\infty \le \mathcal{L} \qquad \forall k = 0,1, \dots , \lfloor \beta \rfloor, $$
$$\left \|  g^{(\lfloor \beta \rfloor)}(. + t) - g^{(\lfloor \beta \rfloor)}(.) \right \|_\infty \le \mathcal{L} |t|^{\beta - \lfloor \beta \rfloor} \qquad \forall t \in \mathbb{R},$$
for $g^{(k)}$ denoting the $k$-th order {\revar derivative} of $g$ and $\lfloor \beta \rfloor$ denoting the largest integer strictly smaller than $\beta$.
\end{definition}

	This leads us to consider a class of coefficients $(a,b)$ for which the stationary density $\pi=\pi_{(a,b)}$ has some prescribed H\"older regularity.
%
{\revarn 
 	\begin{definition}
	Let $\beta > 0$, $\mathcal{L} > 0$, $(a_l)_{0\le l \le 3}\in(0,\infty)^4$, $(b_l)_{0\le l \le 3}\in(0,\infty)^4$, $0<a_\text{min}< a_0$, 
	 $\tilde{C}>0, ~\tilde{\rho}>0$.
	
	We define $\Sigma (\beta, \mathcal{L},a_{\text{min}},(a_l)_{0\le l \le 3}, (b_l)_{0\le l \le 3},\tilde{C},\tilde{\rho})$ the set of couple of functions $(a,b)$ where  $a: \mathbb{R} \rightarrow \mathbb{R}$ and $b: \mathbb{R} \rightarrow \mathbb{R}$ are such that 
	\begin{itemize}
		\item the coefficients $a$ and $b$ satisfy A1 with the constants $a_\text{min}, (a_l)_{0\le l \le 3}, (b_l)_{0\le l \le 3}$,
		\item 	$b$ satisfies A2 with the constants $(\tilde{C},\tilde{\rho})$,
		\item  the density $\pi_{(a,b)}$ of the invariant measure associated to the stochastic differential equation \eqref{eq: model} belongs to $\mathcal{H} (\beta,  \mathcal{L})$.
	\end{itemize}
	\label{def: insieme sigma v2}
\end{definition}
}
{\revarn We know from Theorem 1.4 in \cite{Kut} that the stationary density $\pi=\pi_{(a,b)}$ is explicit as a function of the coefficients of the one dimensional {\modr SDE}, and in consequence the conditions in Definition \ref{def: insieme sigma v2} are not independent. In particular, if the coefficients $a$ and $b$ of the diffusion are $\mathcal{C}^k$, then the stationary density is also of class $\mathcal{C}^k$. Thus, for a diffusion satisfying A1--A2, the stationary density must be at least $\mathcal{C}^3$, and we can check that its derivatives up to order three are bounded. Hence, $\pi$ is at least of class $\mathcal{H}(3,\mathcal{L})$ for some $\mathcal{L}>0$. 
	
	Let us emphasize that the status of the conditions in Definition \ref{def: insieme sigma v2} are different. Condition (A1) is a technical smoothness condition on the model, and we require up to order three regularity for the coefficients in order to apply results in \cite{MenPes} {\modr on} the transition density of the process (see Theorem \ref{thm : transition density} below). Condition (A2) is a classical mean reverting condition on the drift, sufficient to get the existence of a stationary probability. The last condition on the H\"older smoothness of $\pi$ is a standard assumption {\modr on} the regularity of the non-parametric function which is estimated.}

{\modrev
In the subsequent discussion, our analysis heavily relies on well-established results pertaining to the transition density $p_t(x,y)$ of diffusion processes. To ensure clarity, we deem it essential to explicitly outline these 
{\revarn results}. The bound on the transition density in the presence of an unbounded drift can be derived from \cite{MenPes}. Notably, when contrasting this bound with the known results for the transition density under a bounded drift, the primary distinction arises from substituting the initial point with the flow of the initial point.
For a fixed $(s, x) \in \mathbb{R}^+ \times \mathbb{R}^d$, we denote by $\theta_{t,s}(x)$ the deterministic flow that satisfies the differential equation:
$$\dot{\theta}_{t,s}(x) = b(\theta_{t,s}(x)), \quad t \ge 0, \qquad \theta_{s,s}(x) = x.$$
With this in mind, {\revarn we recall the first point of Theorem 1.2 of \cite{MenPes} and the results of Section 4 of the same reference, in the theorem below}}. 
{\revarn 
\begin{theorem}[\cite{MenPes}]\label{thm : transition density}
Under A1-A2, for any {\revarn $\tau > 0$,} $(s, t) \in [0, \infty)^2$, {\revarn $0 < t-s < \tau$}, the unique weak solution of \eqref{eq: model} admits a {\revarn transition density} $p_{t - s}(x,y)$ which is continuous in {\revarn $x, y \in \mathbb{R}$.} Moreover, there exist $\lambda_0 \in (0, 1]$ and ${C_0}, c_0 \ge 1$ such that, for any $(s,t)$, $0< t-s < {\revarn \tau}$ and $x,y \in {\revarn \mathbb{R}}$ it is  
   \begin{equation*}
   	p_{t-s}(x,y)\le {C_0} {\revarn (t-s)^{- \frac{1}{2}}} e^{ - {\lambda_0} \frac{|\theta_{t,s}(x) - y|^2}{t-s} } 
   \end{equation*}
and the constants depend only on {\revarn $\tau ,~a_\text{min},~b_0,~a_0,~ a_1$ and $b_1$.} 

Moreover, for $k=1,2$, we have the control on the derivatives,
\begin{equation*}
	\abs{\frac{\partial^k p_{t-s}(x,y)}{\partial y^k}}\le {C_0} {\revarn (t-s)^{- \frac{1+k}{2}}} e^{ - {\lambda_0} \frac{|\theta_{t,s}(x) - y|^2}{t-s} } 
\end{equation*} 
where the constants $C_0$ and $\lambda_0$ depends on $\tau$, $a_{\text{min}}$, $a_l$, $b_l$ for $l=0,\dots,3$. 
\end{theorem}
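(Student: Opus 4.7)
The plan is to apply Theorem~1.2 of \cite{MenPes}, so the task reduces to verifying that their structural hypotheses are met under A1--A2 and that the resulting constants depend only on the parameters stated. First, I would check that A1 --- globally Lipschitz $\mathcal{C}^3$ coefficients together with uniform ellipticity $a^2\ge a_{\min}^2$ --- ensures existence and uniqueness of a (strong, hence weak) solution and provides the regularity needed for the parametrix construction. The mean-reverting condition A2 is not needed for the local-in-time Gaussian estimate itself, but it guarantees non-explosion of the flow $\theta_{t,s}(x)$ on $[s,s+\tau]$, so that the centering point in the Gaussian bound is well defined and bounded uniformly on the horizon $\tau$.

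The core of the argument is a McKean--Singer parametrix expansion with a moving frozen point. Since $b$ is not assumed bounded, one cannot freeze the coefficients at $x$; instead one freezes along the deterministic characteristic $\theta_{\cdot,s}(x)$, producing as proxy density a time-dependent Gaussian kernel
\[
\tilde p_{t-s}(x,y) \;=\; C\,(t-s)^{-1/2}\exp\!\left(-\lambda \,\frac{|\theta_{t,s}(x)-y|^2}{t-s}\right).
\]
The true density is then written as $p_{t-s}=\tilde p_{t-s}+\tilde p_{t-s}\otimes H$, where $H$ is the parametrix kernel encoding the mismatch between the true and the frozen generators. Using the $\mathcal{C}^3$ smoothness and Lipschitz control of $a,b$ from A1, one shows that $H$ satisfies a Gaussian bound with a smoothing gain of $(t-s)^{1/2}$ per iteration, so the Neumann series converges on any interval $(0,\tau]$ and inherits the same Gaussian tail centered at $\theta_{t,s}(x)$. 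This produces the first stated inequality, with constants $C_0$ and $\lambda_0$ depending only on $\tau$, $a_{\min}$, $a_0$, $a_1$, $b_0$, $b_1$.

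For the derivative estimates, I would differentiate the parametrix identity in the forward variable $y$: each $y$-derivative of the Gaussian proxy produces a polynomial factor in $(y-\theta_{t,s}(x))/(t-s)^{1/2}$ times $(t-s)^{-k/2}\tilde p_{t-s}$, and this polynomial is absorbed into the exponential at the price of a (slightly) smaller $\lambda_0$. The convolution with $H$ is treated by the same scheme, but taking up to two $y$-derivatives forces the use of the second- and third-order regularity of the coefficients, which is precisely why A1 requires $\mathcal{C}^3$. Combining both contributions yields
\[
\left|\frac{\partial^k p_{t-s}(x,y)}{\partial y^k}\right| \le C_0\,(t-s)^{-(1+k)/2}\,e^{-\lambda_0 |\theta_{t,s}(x)-y|^2/(t-s)},\qquad k=1,2.
\]

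The main technical obstacle is not the parametrix scheme itself (which is by now standard) but the bookkeeping of constants: one must verify that every constant arising in the iteration --- in the proxy density, in the kernel $H$, and in its derivatives --- can be controlled uniformly in terms of the finite list $(a_{\min},(a_l)_{0\le l\le 3},(b_l)_{0\le l\le 3},\tau)$ and is \emph{independent} of the particular pair $(a,b)$. This uniformity is what will make the theorem usable later, when the supremum over the class $\Sigma$ enters the lower bound argument; I would therefore keep explicit track of constants along the parametrix steps rather than rely on any abstract compactness or continuity argument.
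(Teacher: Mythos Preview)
The paper does not actually prove this theorem: it is stated as a direct quotation of Theorem~1.2 and Section~4 of \cite{MenPes}, with no argument supplied beyond the citation. Your sketch of the parametrix expansion with freezing along the deterministic flow $\theta_{t,s}(x)$ is a faithful outline of the method used in that reference, so in substance you are reconstructing the original proof rather than offering an alternative.

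One small correction: you say A2 is needed to guarantee non-explosion of the flow $\theta_{\cdot,s}(x)$. That is not quite right---under A1 the drift $b$ is globally Lipschitz, so the ODE $\dot\theta=b(\theta)$ already has a unique global solution with at most exponential growth, and the parametrix estimate on $[0,\tau]$ goes through without any recourse to the mean-reverting condition. A2 plays no role in the Gaussian upper bound itself; it enters only later in the paper to ensure the existence of an invariant density.
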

}

In the next section we will propose an estimator for the estimation
of the invariant density $\pi$ starting from a discrete observation of the process $X$. In particular, we want to find the convergence rates in intermediary regime, i.e. when the discretization step goes to zero but the associated error is not negligible.

\section{Construction estimator and main results}\label{S: results}

We suppose that we observe a finite sample $X_{t_0}, \dots, X_{t_n}$, with $0= t_0 < t_1 < \dots < t_n=: T_n$. The process $X$ is solution of the stochastic differential equation \eqref{eq: model}. Every observation time point depends also on $n$ but, in order to simplify the notation, we suppress this index. We assume the discretization scheme to be uniform which means that, for any $i \in \{ 0, \dots, n-1 \}$, it is $t_{i + 1} - t_i = : \Delta_n$. We will be working in a high-frequency setting i.e. the discretization step $\Delta_n \rightarrow 0$ for $n \rightarrow \infty$. We assume moreover that $T_n = n \Delta_n \rightarrow \infty$ for $n \rightarrow \infty$.  

It is natural to estimate the invariant density $\pi \in \mathcal{H} (\beta, \mathcal{L})$ by means of a kernel estimator.
We therefore introduce some kernel function $K: \mathbb{R} \rightarrow \mathbb{R}$ satisfying 
\begin{equation}
\int_\mathbb{R} K(x) dx = 1, \quad \left \| K \right \|_\infty < \infty, \quad \mbox{supp}(K) \subset [-1, 1], \quad \int_\mathbb{R} K(x) x^l dx = 0,
\label{eq: properties K}
\end{equation}
for all $l \in \left \{ 0, \dots , M \right \}$ with $M \ge  \beta$. \\
When the continuous trajectory of the process is available the convergence rates for the estimation of the invariant density are known. 
In particular, in the mono-dimensional case, it is known that {\modr in our framework} the proposed kernel estimator achieves the parametric rate $\frac{1}{T}$ and such a rate is optimal (see for example \cite{Kut_2004} or Theorem 1 in \cite{Kut_1998}).

We propose to estimate the invariant density $\pi \in \mathcal{H} (\beta, \mathcal{L})$ associated to the process $X$, solution to \eqref{eq: model}, disposing only of the discrete observation of the process. To do that, we propose the following kernel estimator: for $x^*\in \R$ we define 
\begin{align}\label{eq: def discrete estimator} 
\hat{\pi}_{h, n} (x^*) & := \frac{1}{n \Delta_n} \frac{1}{ h} \sum_{i = 0}^{n-1} K(\frac{x^* - X_{t_i}}{h}) (t_{i + 1} - t_i) \\
&= \frac{1}{n} \sum_{i = 0}^{n-1} \mathbb{K}_h(x^* - X_{t_i}), \nonumber
\end{align}
with $K$ a kernel function as in \eqref{eq: properties K}.

The asymptotic behaviour of the estimator proposed in \eqref{eq: def discrete estimator} is based on the bias-variance decomposition. To find the convergence rates it achieves we need a bound on the variance, {\modrev which heavily relies on Castellana and Leadbetter condition, as introduced in \cite{Cas_Lea}. \\
The work of Castellana and Leadbetter in \cite{Cas_Lea} establishes that, subject to the condition \textbf{CL} described below, the density can be estimated using non-parametric estimators (including kernel estimators) with a parametric rate of $\frac{1}{T}$.\\
To introduce the condition \textbf{CL}, we require that the process $X$ belongs to a class of real processes with a common marginal density $\pi$ with respect to the Lebesgue measure on $\mathbb{R}$. Furthermore, we assume that the joint density of $(X_s, X_t)$ exists for all $s \neq t$, is measurable, and satisfies $\pi_{(X_s, X_t)} = \pi_{(X_t, X_s)} = \pi_{(X_0, X_{t-s})}$. This joint density is denoted by $\pi_{|t-s|}$ for all $s, t \in \mathbb{R}$. Additionally, we define the function $g_u$ as $g_u(x,y) = \pi_u(x, y) - \pi(x) \pi(y)$. The condition \textbf{CL} can be stated as follows:
\begin{itemize}
\item[\textbf{CL:}] $u \mapsto \left \| g_u \right \|_\infty$ is integrable on $(0, \infty)$ and $g_u(\cdot, \cdot)$ is continuous for each $u > 0$. 
\end{itemize}
It is important to remark that in our context it is $\pi_u(x, y)= \pi (x) p_u(x, y)$, where we recall that $p_u(x,y)$ is the transition density. Condition \textbf{CL} can be fulfilled by ergodic continuous diffusion processes (see \cite{Ver99} for sufficient conditions). In \cite{Comte_Mer}, the authors developed a projection estimator and established that its $L^2$-integrated risk achieves the parametric rate of $\frac{1}{T}$, but under a weaker condition known as \textbf{WCL}.
\begin{itemize}
\item[\textbf{WCL:}] There exists a positive integrable function $k$ (defined on $\mathbb{R}$) such that
$$\sup_{y \in \mathbb{R}} \int_0^\infty \vert g_u(x, y) \vert du \le k(x), \qquad \text{ for all } x \in \mathbb{R}.$$
\end{itemize}
{\modr The sufficient conditions can be separated into two components: a local irregularity condition referred to as \textbf{WCL1}, and an asymptotic independence condition referred to as \textbf{WCL2}.} These conditions require the existence of two positive integrable functions $k_1$ and $k_2$ defined on $\mathbb{R}$, as well as a positive constant $u_0$, satisfying the following conditions:
\begin{align*}
&\mbox{\bf WCL1: } \sup_{y \in \mathbb{R}} \int_0^{u_0} |g_u(x,y)|\, du < k_1(x), \qquad \text{for all } x \in \mathbb{R},\\
&\mbox{\bf WCL2: } \sup_{y \in \mathbb{R}} \int_{u_0}^\infty  |g_u(x,y)| \, du < k_2(x), \qquad \text{for all } x \in \mathbb{R}.
\end{align*}
In this paper, which primarily focuses on estimation based on discrete observations, we introduce analogous conditions in the discrete framework. We refer to these conditions as \textbf{WDCL1} and \textbf{WDCL2}, where the additional 'D' denotes 'discrete'. There exist two {\revarn positive 
	functions $k_1$} and $k_2$ on $\R$, as well as a positive constant $u_0$, such that
{\revarn for all $n \ge 1$,}
\begin{align*}
	&\mbox{\bf WDCL1: } \sup_{y \in \mathbb{R}} \, {\revarn \Delta_n} \sum_{i = 1}^{\tilde{i}} |g_{t_i}(x,y)|< k_1(x), \qquad \text{for all } x \in \mathbb{R}, \\
	&\mbox{\bf WDCL2: } \sup_{y \in \mathbb{R}} \, {\revarn \Delta_n} \sum_{i = {\tilde{i}} + 1}^{\revarn n-1} |g_{t_i}(x,y)|< k_2(x), \qquad \text{for all } x \in \mathbb{R},
\end{align*}
where we have introduced {\revarn $\tilde{i} : = \left\lfloor \frac{u_0}{\Delta_n} \right\rfloor \wedge (n-1)$, which is 
such that $\tilde{i} := \sup \left \{ i \in [1, n-1] \, \mbox{ such that } i \Delta_n \le u_0 \right \}$ for $n$ large enough.}
\\
In order to show these conditions hold, the main tools consists in the bounds on the transition density $p_t(x,y)$ and its derivatives {\revarn  obtained in \cite{MenPes} and recalled in Theorem  \ref{thm : transition density}. It leads us to the following bound on the variance.}

%
{\revarn
\begin{proposition}
		Let $\beta > 0$, $\mathcal{L} > 0$, $(a_l)_{0\le l \le 3}\in(0,\infty)^4$, $(b_l)_{0\le l \le 3}\in(0,\infty)^4$, $0<a_\text{min}< a_0$, 
	$\tilde{C}>0, ~\tilde{\rho}>0$ and denote by $\Sigma=\Sigma (\beta, \mathcal{L},a_{\text{min}},(a_l)_{0\le l \le 3}, (b_l)_{0\le l \le 3},\tilde{C},\tilde{\rho})$ the set of coefficients $(a,b)$ introduced in Definition \ref{def: insieme sigma v2}. 
	We assume that $X$ is a stationary solution of \eqref{eq: model} and let $\hat{\pi}_{h,n}$ be the estimator proposed in \eqref{eq: def discrete estimator}. Then,	 there exist $c > 0$ and $T_0 > 0$ such that, for $T_n \ge T_0$,
	\begin{equation*}
		Var(\hat{\pi}_{h,n}(x^*)) \le \frac{c}{T_n}+ \frac{c}{T_n} \frac{\Delta_n}{h}.
	\end{equation*}
	Moreover, the constants $c, T_0$ are uniform over the set of coefficients $(a,b) \in \Sigma$ and $x^* \in \mathbb{R}$.
	\label{prop: bound var discrete d=1}
	\end{proposition}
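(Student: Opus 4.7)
The plan is to exploit the stationarity of $X$ to decompose $Var(\hat\pi_{h,n}(x^*))$ into a diagonal contribution, which will produce the $\Delta_n/(T_n h)$ term, and an off-diagonal sum of covariances that, via the conditions WDCL1 and WDCL2, will produce the $1/T_n$ term. The auxiliary ingredient is the verification of these two conditions from the Gaussian-type heat kernel bounds of Theorem \ref{thm : transition density} together with exponential ergodicity from A2.

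First I would write, using stationarity and symmetry in $(i,j)$,
$Var(\hat\pi_{h,n}(x^*)) = \tfrac{1}{n}Var(\mathbb{K}_h(x^*-X_0)) + \tfrac{2}{n^2}\sum_{k=1}^{n-1}(n-k)C_k$, with $C_k := Cov(\mathbb{K}_h(x^*-X_0),\mathbb{K}_h(x^*-X_{t_k}))$. For the diagonal, a direct change of variables yields $\mathbb{E}[\mathbb{K}_h(x^*-X_0)^2] = h^{-1}\int K(u)^2 \pi(x^*-hu)\,du \le h^{-1}\|K\|_\infty^2 \|\pi\|_\infty$, so $\tfrac{1}{n}Var(\mathbb{K}_h(x^*-X_0)) \le c/(nh) = c\Delta_n/(T_n h)$, uniformly in $(a,b)\in\Sigma$ and $x^*$, since $\|\pi\|_\infty$ is controlled by the H\"older constant $\mathcal{L}$.

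For the off-diagonal part, I express $C_k = \int\int \mathbb{K}_h(x^*-x)\mathbb{K}_h(x^*-y)\,g_{t_k}(x,y)\,dx\,dy$ with $g_u(x,y)=\pi(x)p_u(x,y)-\pi(x)\pi(y)$. Splitting the sum at $\tilde i=\lfloor u_0/\Delta_n\rfloor\wedge(n-1)$ and applying WDCL1 resp.\ WDCL2, I obtain $\sum_{k=1}^{\tilde i}|C_k| \le \Delta_n^{-1}\int\int \mathbb{K}_h(x^*-x)\mathbb{K}_h(x^*-y)\,k_1(x)\,dx\,dy$, and analogously with $k_2$ for $k\ge \tilde i+1$. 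Since $\int \mathbb{K}_h=1$ and each $k_j$ is bounded/integrable, Fubini gives $\sum_{k=1}^{n-1}|C_k| \le C/\Delta_n$. Combining with the prefactor $2/n$, the off-diagonal contribution is at most $c/(n\Delta_n) = c/T_n$, which is the first term of the claim.

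The main technical obstacle is the verification of WDCL1 and WDCL2 with constants uniform over the class $\Sigma$. For WDCL1, taking the supremum in $y$ of the exponential in Theorem \ref{thm : transition density} gives $\sup_y p_t(x,y)\le C_0 t^{-1/2}$, and $\Delta_n\sum_{i=1}^{\tilde i} t_i^{-1/2}$ is a Riemann-sum upper bound for $\int_0^{u_0} C_0 t^{-1/2}\,dt = 2C_0\sqrt{u_0}$, uniformly in $x,y$ and in $\Sigma$; the $\pi(y)$ part contributes at most $\|\pi\|_\infty u_0$. For WDCL2 one needs the quantitative mixing estimate $\sup_y|p_t(x,y)-\pi(y)|\le C e^{-\alpha t}$ for $t\ge u_0$, which is deduced from the mean-reverting condition A2 combined with the Gaussian upper bound via a standard Lyapunov/coupling argument, making the tail series geometric and hence summable. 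Tracking the dependence of every constant on the structural parameters $(a_\text{min},(a_l),(b_l),\tilde C,\tilde\rho,\mathcal{L},\beta)$ of $\Sigma$, so as to yield the required uniformity, is the delicate bookkeeping step.
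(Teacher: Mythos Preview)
Your overall architecture coincides with the paper's: the same stationary variance decomposition into a diagonal piece giving $c\Delta_n/(T_nh)$ and an off-diagonal covariance sum handled through the discrete Castellana--Leadbetter conditions WDCL1 and WDCL2, with WDCL1 obtained exactly as you do from the Gaussian upper bound $p_t(x,y)\le C_0t^{-1/2}$ of Theorem~\ref{thm : transition density}.

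The gap is in your verification of WDCL2. You invoke a ``standard Lyapunov/coupling argument'' to deduce $\sup_{y}|p_t(x,y)-\pi(y)|\le Ce^{-\alpha t}$, but Lyapunov/coupling methods typically yield total-variation or $L^1$-type bounds $\int|p_t(x,\cdot)-\pi|\le C(x)e^{-\alpha t}$, not a \emph{pointwise} sup-in-$y$ estimate. Upgrading from $L^1$ to $\sup_y$ requires an additional regularity argument on the transition density, and this is precisely the nontrivial step the paper works out. The paper proceeds by writing $p_t(x,y)-\pi(y)$ as an inverse Fourier transform of $\varphi_x(\xi,t)-\varphi(\xi)$, proving the decay estimates $|\varphi_x(\xi,t)|,|\varphi(\xi)|\le c(1+|\xi|)^{-2}$ via integration by parts and the derivative bounds on $p_1$ from Theorem~\ref{thm : transition density}, and then combining this polynomial decay in $\xi$ with the $L^2(\pi)$ spectral-gap contraction of the semi-group (Lemma~8 of \cite{Minimax}) plus a short hypercontractive-type lemma (Lemma~\ref{l: hyper contract}) that passes from $L^1(\pi)$ to pointwise control. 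This yields $|g_t(x,y)|\le c\,\pi(x)^{3/4}e^{-t/c}$ uniformly over $\Sigma$, which is what feeds into WDCL2. Your sketch skips this mechanism; to make your route complete you would need either to reproduce an equivalent sup-in-$y$ argument or to cite a result giving uniform exponential convergence of one-dimensional diffusion densities in $L^\infty$ over classes like $\Sigma$, which is not standard.
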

}
{\revarn
	We deduce the following result on the risk of the estimator $\hat{\pi}_{h,n}$.
\begin{theorem}
		Let $\beta > 0$, $\mathcal{L} > 0$, $(a_l)_{0\le l \le 3}\in(0,\infty)^4$, $(b_l)_{0\le l \le 3}\in(0,\infty)^4$, $0<a_\text{min}< a_0$, 
	$\tilde{C}>0, ~\tilde{\rho}>0$ and denote  $\Sigma=\Sigma (\beta, \mathcal{L},a_{\text{min}},(a_l)_{0\le l \le 3}, (b_l)_{0\le l \le 3},\tilde{C},\tilde{\rho})$. 
Then, there exist  $c > 0$ and $T_0 > 0$ such that
for $T_n \ge T_0$, the following hold true.
\begin{itemize}
    \item[$\bullet$] If {\modrev $\Delta_n \underset{\sim}{<} (\frac{1}{T_n})^{\frac{1}{2 \beta}}$}, then there exists a sequence $(h_n)_n$ such that 
    $$\sup_{(a,b) \in \Sigma} \mathbb{E}_ {(a,b)}[|\hat{\pi}_{h_n,n}(x^*) - \pi (x^*)|^2] \le \frac{c}{T_n}.$$
    \item[$\bullet$] If otherwise {\modrev $\Delta_n \underset{\sim}{>} (\frac{1}{T_n})^{\frac{1}{2 \beta}} $}, then then there exists a sequence $(h_n)_n$ such that
    $$\sup_{(a,b) \in \Sigma} \mathbb{E}_ {(a,b)}[|\hat{\pi}_{h_n,n}(x^*) - \pi (x^*)|^2] \le c (\frac{1}{n})^{\frac{2 \beta}{2 \beta + 1}}.$$
\end{itemize}
We use the notation $\E_{(a,b)}$ to emphasize that we compute the expectation under the law of $(X_t)_{t\in[0,T_n]}$  stationary solution of \eqref{eq: model}.
\label{th: discrete d =1}
\end{theorem}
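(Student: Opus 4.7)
The plan is to perform the standard bias-variance decomposition
$$\mathbb{E}_{(a,b)}[(\hat{\pi}_{h,n}(x^*) - \pi(x^*))^2] = \bigl(\mathbb{E}_{(a,b)}[\hat{\pi}_{h,n}(x^*)] - \pi(x^*)\bigr)^2 + \operatorname{Var}(\hat{\pi}_{h,n}(x^*)),$$
and then to optimize the bandwidth $h_n$ according to which of the two quantities $(1/T_n)^{1/(2\beta)}$ or $\Delta_n$ is larger. Since the hard work on the variance has already been done in Proposition \ref{prop: bound var discrete d=1}, the only remaining input is a uniform bias bound plus a bandwidth optimization.

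First, I would control the bias. By stationarity,
$$\mathbb{E}_{(a,b)}[\hat{\pi}_{h,n}(x^*)] - \pi(x^*) = \int_{\R} K(u)\bigl[\pi(x^*-hu) - \pi(x^*)\bigr]\,du.$$
Taylor-expanding $\pi$ at $x^*$ to order $\lfloor\beta\rfloor$, the polynomial terms are killed by the vanishing-moment conditions on $K$ from \eqref{eq: properties K} (since $M\ge\beta$), and the remainder is controlled by the H\"older regularity $\pi\in\mathcal{H}(\beta,\mathcal{L})$ together with $\operatorname{supp}(K)\subset[-1,1]$. This gives the classical bound
$$\bigl|\mathbb{E}_{(a,b)}[\hat{\pi}_{h,n}(x^*)] - \pi(x^*)\bigr| \le c\,h^{\beta},$$
with a constant $c$ depending only on $\beta$, $\mathcal{L}$ and $K$, hence uniform in $(a,b)\in\Sigma$ and $x^*\in\R$.

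Combining with Proposition \ref{prop: bound var discrete d=1}, the risk is bounded above by
$$c\,h^{2\beta} + \frac{c}{T_n} + \frac{c\,\Delta_n}{T_n\,h}.$$
It remains to select $h_n$ in each regime. If $\Delta_n \le (1/T_n)^{1/(2\beta)}$, take $h_n = (1/T_n)^{1/(2\beta)}$; then $h_n^{2\beta}=1/T_n$ and, since $\Delta_n\le h_n$, also $\Delta_n/(T_n h_n)\le 1/T_n$, so all three terms are of order $1/T_n$. If instead $\Delta_n>(1/T_n)^{1/(2\beta)}$, use $T_n h = n\Delta_n h$ to rewrite the last term as $1/(n h)$, and minimize $h^{2\beta}+1/(nh)$ to obtain the optimizer $h_n \asymp n^{-1/(2\beta+1)}$, which yields $h_n^{2\beta}\asymp 1/(nh_n)\asymp n^{-2\beta/(2\beta+1)}$. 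A short computation shows that the condition $\Delta_n>(1/T_n)^{1/(2\beta)}$ is equivalent to $\Delta_n^{2\beta+1}>1/n$, that is $\Delta_n>n^{-1/(2\beta+1)}$, which forces $1/T_n = 1/(n\Delta_n) \le n^{-2\beta/(2\beta+1)}$; hence the residual variance $1/T_n$ does not spoil the non-parametric rate.

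There is no genuine obstacle at this stage: the delicate step, namely establishing the discrete Castellana--Leadbetter conditions \textbf{WDCL1}--\textbf{WDCL2} and the resulting variance bound with the extra factor $\Delta_n/h$, has already been carried out in Proposition \ref{prop: bound var discrete d=1} via the transition-density estimates of Theorem \ref{thm : transition density}. The uniformity of all constants over $\Sigma$ inherits directly from the uniform bounds in that proposition and from the fact that the bias constant depends only on $\beta$, $\mathcal{L}$ and the kernel $K$.
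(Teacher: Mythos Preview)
Your proposal is correct and follows essentially the same route as the paper: a bias-variance decomposition with the classical bias bound $c h^{2\beta}$, Proposition \ref{prop: bound var discrete d=1} for the variance, and the same bandwidth choices $h_n=(1/T_n)^{1/(2\beta)}$ and $h_n=n^{-1/(2\beta+1)}$ in the two regimes. The paper's proof is slightly terser (it does not spell out the bias argument and in the second regime it first bounds $1/T_n$ by $n^{-2\beta/(2\beta+1)}$ rather than rewriting $\Delta_n/(T_n h)=1/(nh)$), but the logic is identical.
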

}
{\modrev 
	\begin{remark}
	The conditions $\Delta_n \underset{\sim}{<} (\frac{1}{T_n})^{\frac{1}{2 \beta}}$ and $\Delta_n \underset{\sim}{>} (\frac{1}{T_n})^{\frac{1}{2 \beta}}$ {\modr separate} the two different regimes under consideration. The first corresponds to the case where it is possible to recover the continuous convergence rate, while the second is what we refer to as the intermediate regime. When $\Delta_n$ equals the threshold $(\frac{1}{T_n})^{\frac{1}{2 \beta}}$, and knowing that $T_n = n \Delta_n$, we find that $\Delta_n = n^{- \frac{1}{2 \beta + 1}}$. Substituting this value, we discover that $\frac{1}{T_n} = (\frac{1}{n})^{\frac{2 \beta}{2 \beta + 1}}$. Consequently, there is no effective difference between the two cases in Theorem \ref{th: discrete d =1} when the discretization step is equal to the critical value $(\frac{1}{T_n})^{\frac{1}{2 \beta}}$.\\
From Theorem \ref{th: discrete d =1} it follows that, in the intermediate regime, the convergence rate achieved by the proposed estimator is $(\frac{1}{n})^{\frac{2 \beta}{2 \beta + d}}$, where $d$ is the dimension and it is here equal to $1$. It has been shown in Theorems 2 and 3 of \cite{disc} that the convergence rates in the intermediate regime are the same also in higher dimension, up to replacing $\beta$ with $\bar{\beta}$, the harmonic mean of the smoothness over the $d$ different directions.
It is interesting to remark that it is also the convergence rate for the estimation of a probability density belonging to an {\modr H\"older} class, associated to $n$ iid random variables $X_1, \dots, X_n$.
\end{remark}
	\begin{remark}
		It can be seen in the proof of Theorem \ref{th: discrete d =1}, that the optimal bandwidths $(h_n)_n$ depend on the unknown smoothness degree $\beta$. Moreover, the condition which {\modr separates} the two regimes depends on $\beta$ as well.		 
		 Consequently, it can be worthwhile to propose an adaptive procedure, akin to the one initially introduced by Goldenslugher and Lepski in \cite{GL}, which allows to choose the bandwidth using only the data without the prior knowledge of $\beta$. {\modr This aspect has been analyzed in a context close to ours in \cite{Minimax}, which studies the same model as in \eqref{eq: model} but assumes that the continuous observation of the process is available. The analogous procedure, in the case where only a discrete sampling of the process is available, has not yet been considered and is left for future investigation.}
\end{remark}
\begin{remark} {\modr Observe that we know the rate $1/T_n$ is optimal in a minimax sense starting from a discrete sampling. This is a consequence of Theorem 4.3 in \cite{Kut}, which establishes that the convergence rate $1/T$ is optimal in a minimax sense for the estimation of $\pi$ starting from the observation of the continuous trajectory of the process.} Thus, in the case 
	$\Delta_n \underset{\sim}{<} (\frac{1}{T_n})^{\frac{1}{2 \beta}}$, our estimator based on a discrete sampling is optimal {\modr in a minimax sense}.
	In the intermediate regime, we can also prove that the convergence rate is optimal, as demonstrated in Theorem \ref{th: borne inf discrete} below. 
\end{remark}
}}
{\modrev The following theorem demonstrates the optimality of the convergence rate identified in the intermediate regime. Its proof relies on Malliavin calculus, which is extensively detailed in Appendix \ref{S: Appendix Malliavin}. Particularly, the crucial element for our result is the Malliavin representation of a score function through a Malliavin weight {\revarn (refer to Section
	\ref{Ss: proof of prop sans Malliavin} 
 and Appendix \ref{Ss: proof of score by Malliavin}
	below for more information).}
}
{\revarn
\begin{theorem}
	Let  $\beta \ge  3$ , $\mathcal{L} > 0$, $(a_l)_{0\le l \le 3}\in(0,\infty)^4$, $(b_l)_{0\le l \le 3}\in(0,\infty)^4$, $0<a_\text{min}< a_0$
	{\modr $\tilde{C}>0$.} Assume also that {\revar 
			$(\frac{1}{T_n})^{\frac{1}{2 \beta}} \le \Delta_n $, $\forall n \ge 1$, and $\Delta_n = O(n^{-\varepsilon})$, for some $\varepsilon>0$}. 
	{\revar 
		Then, there exist {\modr $\tilde{\rho}$, $c > 0$} and $T_0 > 0$ such that, for $T_n \ge T_0$,}
	$$\inf_{\tilde{\pi}_{T_n}} \sup_{(a,b) \in \Sigma} \E_{(a,b)}[(\tilde{\pi}_{T_n}(x^*) - \pi(x^*))^2] \ge c (\frac{1}{n})^{\frac{2 \beta}{2 \beta + 1}},$$
	where $\Sigma=\Sigma (\beta, \mathcal{L},a_{\text{min}},(a_l)_{0\le l \le 3}, (b_l)_{0\le l \le 3},\tilde{C},\tilde{\rho})$ and the infimum is taken {\revar over all estimators} of the invariant density at point $x^*$. 
	\label{th: borne inf discrete}
\end{theorem}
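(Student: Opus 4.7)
The plan is to apply Le Cam's two-hypotheses method as announced in the introduction. I would construct two coefficient pairs $(a_0,b)$ and $(a_1,b)$ in $\Sigma$ sharing the same drift but with diffusion coefficients differing only by a bump of bandwidth $h$ around $x^*$: concretely $a_1^2(x)=a_0^2(x)+h^\beta\varphi((x-x^*)/h)$ for a smooth compactly supported $\varphi$, with $h\asymp n^{-1/(2\beta+1)}$ so that $nh^{2\beta+1}\asymp 1$. Using the explicit one-dimensional formula for the invariant density (recalled after Definition \ref{def: insieme sigma v2}), a direct computation gives $|\pi_0(x^*)-\pi_1(x^*)|\gtrsim h^\beta$, while both couples can be checked to lie in $\Sigma$ after a suitable choice of $\tilde\rho$. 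By the classical reduction scheme, the theorem follows if the squared Hellinger distance between the laws $\Pb^{(a_0,b)}$ and $\Pb^{(a_1,b)}$ of the discrete sample $(X_{t_0},\dots,X_{t_n})$ stays bounded away from $2$.

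To bound this Hellinger distance, I would interpolate with $a^\epsilon=\sqrt{(1-\epsilon)a_0^2+\epsilon a_1^2}$ and consider the associated process $X^\epsilon$ of \eqref{eq: model epsilon}. Starting from stationarity and using the Markov property, the squared Hellinger distance tensorizes: it is bounded by a sum over $i=0,\dots,n-1$ of local squared Hellinger distances at time $\Delta_n$ between the transition kernels of $X^0$ and $X^1$, averaged against the invariant law. An integration in $\epsilon$ then replaces each local Hellinger distance by the Fisher information
\[
F^\epsilon_{\Delta_n}(x_0):=\E\!\left[\left(\frac{\dot p^\epsilon_{\Delta_n}(x_0,X^\epsilon_{\Delta_n})}{p^\epsilon_{\Delta_n}(x_0,X^\epsilon_{\Delta_n})}\right)^2\,\bigg|\,X^\epsilon_0=x_0\right],
\]
and it suffices to prove that its average under the stationary law, summed over the $n$ steps, is $O(1)$ for the chosen $h$.

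The heart of the argument, and its main obstacle, is a sharp estimate for the score $\dot p^\epsilon_{\Delta_n}/p^\epsilon_{\Delta_n}$. I would use a Bismut-type integration-by-parts formula (Appendix \ref{S: Appendix Malliavin}) to write this score as the conditional expectation of a Malliavin weight $W^\epsilon_{\Delta_n}$ whose integrand involves the perturbation $\dot a^\epsilon$ and is therefore spatially localized in $|x-x^*|\le h$. A plain $L^2$ bound on $W^\epsilon_{\Delta_n}$, in the spirit of \cite{Clement}, ignores this localization and loses a factor $h$; I would instead estimate $\E[W^\epsilon_{\Delta_n}\mid X^\epsilon_{\Delta_n}=y]$. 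After a change of variables reducing to Brownian motion, this conditional weight is expressed through integrals involving Brownian local time, $\E_y[\int_0^{\Delta_n}s^{-\gamma}\,dL^z_s(\tilde B)\mid\tilde B_{\Delta_n}=x]$, whose Brownian-bridge controls were announced in the introduction as extending the results of \cite{Gradinaru_et_al_1999} and \cite{Pitman} beyond $\gamma\le 1/2$.

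Putting these ingredients together, the spatial localization of $\varphi$ produces the missing factor of $h$: each $F^\epsilon_{\Delta_n}$ averaged against the invariant law contributes $O(h^{2\beta+1}/\Delta_n)$, so that summing $n$ such terms yields $nh^{2\beta+1}\asymp 1$. The regime hypothesis $\Delta_n\ge (1/T_n)^{1/(2\beta)}$ is exactly what guarantees that the local-time gain does not exceed the discretization step (otherwise one would recover the super-efficient rate $1/T_n$), while the technical assumption $\Delta_n=O(n^{-\varepsilon})$ is used to control the remainder terms in the Malliavin expansion. The two-hypotheses argument then yields the announced lower bound of order $(1/n)^{2\beta/(2\beta+1)}$.
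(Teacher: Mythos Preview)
Your proposal follows essentially the same route as the paper: two hypotheses with a localized bump on the diffusion coefficient, Hellinger tensorization for the Markov chain, interpolation in $\epsilon$, Malliavin representation of the score, and the crucial refinement of bounding the \emph{conditional} expectation of the Malliavin weight via Brownian-bridge local-time estimates rather than its crude $L^2$ norm. The parameterizations differ cosmetically (you perturb $a^2$ additively and interpolate via a square root, the paper perturbs $a$ linearly with amplitude $1/M_n\asymp h^\beta$), but the mechanics are the same.

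One arithmetic slip: you write that each averaged Fisher information contributes $O(h^{2\beta+1}/\Delta_n)$ and then conclude the sum over $n$ steps is $nh^{2\beta+1}$, which is inconsistent. The correct per-step bound is $O(h^{2\beta+1})$ (this is exactly Proposition~\ref{prop: control Fisher sans Malliavin} in the paper, with $h_n/M_n^2\asymp h_n^{2\beta+1}$), and summing $n$ copies gives $nh^{2\beta+1}\asymp 1$ directly; no $\Delta_n$ appears at that stage. Also, your phrase ``change of variables reducing to Brownian motion'' is a bit optimistic: the paper does not transform the diffusion into a Brownian motion but rather (i) localizes to $|x_0|\le\Delta_n^{1/2-\gamma}$ and removes the drift via a coupling argument, then (ii) decomposes the Malliavin weight into several pieces, only one of which (the principal part of $\widehat W^2$) ultimately requires the local-time bound, after an It\^o-formula manipulation that converts a stochastic integral of $\psi_{h_n}'$ into a time integral of $\psi_{h_n}''$.
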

}
{\revar \begin{remark}
	Contrary to the upper bound, the lower bound is not stated over any set $\Sigma$ potentially given by Definition \ref{def: insieme sigma v2}, but only with some sufficiently {\modr large value for $\tilde{\rho}$.} Actually, it is impossible to get the lower bound for all values {\modr $\tilde{\rho}$} as the mean reverting condition A2 could conflict with the upper bounds on the drift defined by the constants $b_0$, $b_1$ appearing in A1. In such case, the set $\Sigma$ is empty, and the lower bound cannot hold true.
\end{remark}}
Theorem \ref{th: borne inf discrete} here above implies that, on a {\revar sufficiently large} class of diffusion $X$ discretely observed (with a uniform discretization step), whose invariant density belongs to ${\modarn \mathcal{H}}(\beta,{\revarn  \mathcal{L}})$, it is not possible to find an estimator with rate of estimation better than $(\frac{1}{n})^{\frac{2 \beta}{2 \beta + 1}}$. \\
Comparing the results here above with the upper bound in Theorem \ref{th: discrete d =1} we observe that the convergence rate we found in the lower bound and in the upper bound in the intermediate regime are the same. It follows that the estimator $\hat{\pi}_{h,n}$ we proposed in \eqref{eq: def discrete estimator} achieves the best possible convergence rate. {\modr On the other side, however, it is worth underlining that the class $\Sigma$ considered in Theorem \ref{th: borne inf discrete} is more restricted than the classes allowed in the upper bounds.}\\
\\
\begin{remark}\label{R: extension}
{\modrev One may wonder if it possible to extend the lower bound gathered in Theorem \ref{th: borne inf discrete} in higher dimension, proving in this way that the convergence rate in intermediate regime found in \cite{disc} is optimal for any $d$. On one side, the proof of Theorem \ref{th: borne inf discrete} relies on Malliavin calculus, which can be easily extended for $d > 1$. On the other side, we need some controls on the local time to bound the main term coming from the Malliavin weight and this does not allow us to move to higher dimension. In particular, the result in Lemma \ref{l: main term malliavin} below holds true only for $d = 1$, and less sharp estimations on the conditional expectation in the left hand side of Lemma \ref{l: main term malliavin} are not enough to recover the wanted convergence rate. An idea to overcome the problem and to obtain similar controls in higher dimension could be to 
		use {\revar the solution of the Poisson equation associated to the generator of the diffusion,}
		in a similar way as in Lemma 1 of \cite{Richard}. This will be object of further investigation. 
}
\end{remark}

\section{Proofs}{\label{S: proofs}}

This section is devoted to the proof of our main results. {\revarn Remark, that in the proofs, the constant $c$ may change from line to line, but remains uniform on the class of models with diffusion and drift coefficients in $\Sigma$.}

\subsection{Proof of Proposition \ref{prop: bound var discrete d=1}}\label{Ss : proof var upper bound}
\begin{proof}
{\modrev
	{\revarn We start by {\modr expanding} the variance term in the following way
	\begin{align*}
		Var(\hat{\pi}_{h,n}(x^*)) & = Var(\frac{1}{n \Delta_n} \sum_{j = 0}^{n - 1} \K_h(x^* - X_{t_j}) \Delta_n) \nonumber \\
		& = \frac{\Delta_n^2}{T_n^2} \left\{nk(t_0)+ 2 \sum_{j=1}^{n-1} (n-j) k(t_j)\right\},
	\end{align*}
with
\begin{equation*}
	k(t_j) =  \text{Cov}(\mathbb{K}_h (x^* - X_{t_j}), \mathbb{K}_h (x^* - X_0)).
 \end{equation*}
We have $k(t_0) \le \E\left[ \mathbb{K}_h (x^* - X_0)^2\right]=\int_\R \mathbb{K}_h (x^* - y)^2 \pi(y) dy \le \frac{\norm{\pi}_\infty \int_\R \K^2(y)dy}{h} \le \frac{c}{h}$, where $c$ is some constant independent of $(a,b) \in \Sigma$, as we know $\norm{\pi}_\infty \le \mathcal{L}$ by Definition \ref{def: insieme sigma v2}.  This leads us to write,
	\begin{equation}{\label{eq: def I tilde_rev}}
	Var(\hat{\pi}_{h,n}(x^*)) \le \frac{c\Delta_n}{T_n h} + 2 \abs{I}
\end{equation} where 
$I= \frac{\Delta_n^2}{T_n^2}\sum_{j=1}^{n-1} (n-j) k(t_j)$.
}	
{\modrev On {\revarn $I$} we want to use the weak discrete Castellana and Leadbetter condition as formulated in \textbf{WDCL1} and \textbf{WDCL2}. We observe it is 
$$k(t_j) =  \text{Cov}(\mathbb{K}_h (x^* - X_{t_j}), \mathbb{K}_h (x^* - X_0)) = \int_{\mathbb{R}} \int_{\mathbb{R}} \mathbb{K}_h (x^* - y) \mathbb{K}_h (x^* - z) g_{t_j} (y,z) dy\, dz\,$$
where {\revarn we recall that $g_{t_j} (y,z) = \pi(y) p_{t_j}(y,z) - \pi(y) \pi(z)$. 
	Hence, we can write }
\begin{align*}
|{\revarn I}| & \le | \frac{\Delta_n^2}{T_n^2} \sum_{{\revarn j = 1}}^{{\revarn n-1}} (n-j) \int_{\mathbb{R}} \int_{\mathbb{R}} \mathbb{K}_h (x^* - y) \mathbb{K}_h (x^* - z) g_{t_j} (y,z) dy\, dz\ | \\
& \le c \frac{\Delta_n}{T_n} \int_{\mathbb{R}} \int_{\mathbb{R}} |\mathbb{K}_h (x^* - y) |\, | \mathbb{K}_h (x^* - z)| \, | \sum_{{\revarn j = 1} }^{{\revarn n-1}} g_{t_j} (y,z) | dy dz \\
& \le c \frac{\Delta_n}{T_n} \int_{\mathbb{R}} |\mathbb{K}_h (x^* - y) |\, \sup_{z \in \R} | \sum_{{\revarn j = 1} }^{{\revarn n-1}} g_{t_j} (y,z) | dy,
\end{align*}
where we have used that 
\begin{equation*}
\int_{\R} | \mathbb{K}_h (x^* - z)| dz \le c,   
\end{equation*}
for some $c > 0$. Now the result follows once we show that \textbf{WDCL1} and \textbf{WDCL2} hold true, as stated in next proposition and proved at the end of this section. }

\begin{proposition}{\label{prop: WDCL}}
{\revarn Suppose that $(a,b) \in \Sigma$. 
Then, conditions \textbf{WDCL1} and \textbf{WDCL2} are satisfied with two bounded functions $k_1$ and $k_2$. Moreover, $\norm{k_1}_\infty$ and $\norm{k_2}_\infty$ are bounded independently of $(a,b) \in \Sigma$.}
\end{proposition}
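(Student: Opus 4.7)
The plan is to decompose $g_u(x,y) = \pi(x)\bigl[p_u(x,y) - \pi(y)\bigr]$ and then treat the two regimes (short time $t_i \le u_0$ for \textbf{WDCL1}, long time $t_i > u_0$ for \textbf{WDCL2}) by completely different tools. Since $\pi(x) \le \mathcal{L}$ uniformly on $\Sigma$ by Definition \ref{def: insieme sigma v2}, it will be enough to control the quantities $\sup_y |p_u(x,y) - \pi(y)|$ through bounds depending only on the parameters defining $\Sigma$.

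For \textbf{WDCL1} I would simply use the Gaussian-type upper bound of Theorem \ref{thm : transition density}, which yields $\sup_{x,y} p_u(x,y) \le C_0 u^{-1/2}$ with constants uniform over $\Sigma$ (for $u \le \tau$, choosing $\tau = u_0$). Combined with $\pi(y) \le \mathcal{L}$, this gives $\sup_y |g_u(x,y)| \le \pi(x)\bigl(C_0 u^{-1/2} + \mathcal{L}\bigr)$, and hence
\begin{equation*}
\Delta_n \sum_{i=1}^{\tilde{i}} |g_{t_i}(x,y)| \le \pi(x)\left[ C_0 \sqrt{\Delta_n}\sum_{i=1}^{\tilde{i}} i^{-1/2} + \mathcal{L}\,\Delta_n \tilde{i}\right] \le \pi(x) \bigl[ 2 C_0 \sqrt{u_0} + \mathcal{L} u_0 \bigr],
\end{equation*}
by the Riemann-sum comparison $\sum_{i=1}^{\tilde{i}} i^{-1/2} \le 2\sqrt{\tilde{i}}$ and $\tilde{i}\Delta_n \le u_0$. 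One then takes $k_1(x) := \pi(x)\bigl[2C_0\sqrt{u_0} + \mathcal{L} u_0\bigr]$, which is bounded in sup-norm by $\mathcal{L}(2 C_0\sqrt{u_0} + \mathcal{L} u_0)$, uniformly over $\Sigma$.

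For \textbf{WDCL2} the crucial input is exponential ergodicity in the form
\begin{equation*}
\sup_{y \in \R} |p_u(x,y) - \pi(y)| \le C_1 e^{-\kappa u}, \qquad u \ge u_0,
\end{equation*}
with constants $C_1, \kappa>0$ that depend only on $a_{\min}, (a_l), (b_l), \tilde{C}, \tilde{\rho}$. Such a bound can be obtained by combining the short-time Gaussian upper bound of Theorem \ref{thm : transition density} (which gives an upper bound on $p_{u_0/2}$) with a Foster--Lyapunov/Meyn--Tweedie type argument: the drift condition A2 furnishes a Lyapunov function (for instance $V(x) = 1 + x^2$ or $V(x) = \cosh(\alpha x)$) for which the generator satisfies $\Lc V \le -\lambda V + K \Indi{|x|\le R}$, and the uniform ellipticity in A1 together with the Gaussian lower bound give a uniform minorisation of the $u_0/2$-transition kernel on compacts. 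The constants in these steps depend only on the class parameters, so the decay rate $\kappa$ and the prefactor $C_1$ are uniform on $\Sigma$. Once this estimate is in hand,
\begin{equation*}
\Delta_n \sum_{i=\tilde{i}+1}^{n-1} |g_{t_i}(x,y)| \le \pi(x)\, C_1 \Delta_n \sum_{i \ge \tilde{i}+1} e^{-\kappa i\Delta_n} \le \pi(x)\, C_1\, \frac{\Delta_n\, e^{-\kappa u_0}}{1 - e^{-\kappa \Delta_n}} \le C_2\, \pi(x),
\end{equation*}
using $\Delta_n/(1-e^{-\kappa\Delta_n}) \le 2/\kappa$ (valid for $\Delta_n$ small enough, hence for $n$ large, and otherwise absorbed in $C_2$). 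Setting $k_2(x) := C_2 \pi(x)$ concludes, with $\|k_2\|_\infty \le C_2 \mathcal{L}$ uniformly over $\Sigma$.

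The only delicate step is the uniform geometric ergodicity used in \textbf{WDCL2}; all other steps are elementary Riemann-sum comparisons based on Theorem \ref{thm : transition density}. The uniformity hinges on the fact that A1 and A2 in Definition \ref{def: insieme sigma v2} are imposed with constants that are fixed along the class $\Sigma$, so the Lyapunov drift inequality and the local minorisation both hold with the same quantitative constants, and standard coupling/Doeblin arguments then deliver $C_1$ and $\kappa$ that do not depend on the particular choice of $(a,b)\in\Sigma$.
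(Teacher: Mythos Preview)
Your treatment of \textbf{WDCL1} is essentially the paper's argument and is correct.

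For \textbf{WDCL2} there is a real gap. The bound you assert,
\[
\sup_{y\in\R}\bigl|p_u(x,y)-\pi(y)\bigr|\le C_1\,e^{-\kappa u},\qquad u\ge u_0,
\]
with $C_1$ \emph{independent of $x$}, does not follow from a Foster--Lyapunov/Meyn--Tweedie argument under A1--A2, and in fact fails: if $|x|$ is large the process needs time of order $|x|$ (the drift is only linearly confining) to reach the bulk of $\pi$, so for fixed $u$ one has $\sup_y|p_u(x,y)-\pi(y)|\ge \pi(0)/2$ as $|x|\to\infty$. What Meyn--Tweedie actually gives is a $V$-weighted total-variation bound $\|P_u(x,\cdot)-\pi\|_{\mathrm{TV}}\le C\,V(x)\,e^{-\kappa u}$ with $V$ unbounded; this is geometric, not uniform, ergodicity. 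Your argument can be repaired by (i) keeping the factor $V(x)$, (ii) upgrading the TV bound to a pointwise density bound via Chapman--Kolmogorov together with the short-time estimate $\sup_{z}p_1(z,y)\le C_0$ from Theorem~\ref{thm : transition density}, and (iii) observing that $\pi(x)V(x)$ is bounded since $\pi$ has exponential tails under A2. This yields $|g_t(x,y)|\le c\,\pi(x)V(x)\,e^{-\kappa t}$ and your geometric summation then goes through with $k_2(x)=c\,\pi(x)V(x)$.

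The paper takes a different route for \textbf{WDCL2}: it writes $p_t(x,y)-\pi(y)$ via the inverse Fourier transform, proves uniform Fourier decay $|\varphi_x(\xi,t)|+|\varphi(\xi)|\le c(1+|\xi|)^{-2}$ from the bounds on $\partial_y^2 p_1$ in Theorem~\ref{thm : transition density}, and interpolates this with the $L^2(\pi)$ spectral-gap contraction $\|P_t\psi\|_{L^2(\pi)}\le c\,e^{-t/c}\|\psi\|_\infty$ (plus the smoothing Lemma~\ref{l: hyper contract}) to obtain $|g_t(x,y)|\le c\,\pi(x)^{3/4}e^{-t/c}$. Both approaches ultimately produce a bounded $k_2$ with sup-norm uniform on $\Sigma$; the paper's gives the explicit weight $\pi(x)^{3/4}$, while a corrected Lyapunov argument would give $\pi(x)V(x)$.
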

{\modrev 
It directly follows from the boundedness of $\pi$ and the properties of $K$
\begin{align*}
|{\revarn  I }| & \le c \frac{\Delta_n}{T_n} \int_{\mathbb{R}} |\mathbb{K}_h (x^* - y)|
{\revarn \left[ \frac{k_1(y)+k_2(y)}{\Delta_n} \right]  dy}
\\
& {\revarn \le \frac{c(\norm{k_1}_\infty+\norm{k_2}_\infty) }{T_n}  \int_{\mathbb{R}} |\mathbb{K}_h (x^* - y)| dy \le \frac{c}{T_n}},
\end{align*}
where we have used the fact that the kernel function has compact support and its $L^1$ norm is bounded by a constant. 
\noindent We put all the pieces together {\revarn in \eqref{eq: def I tilde_rev},} which implies the following bound holds true: 
 {\revarn 
 $$Var(\hat{\pi}_{h,n}(x^{ *})) \le \frac{c\Delta_n}{T_n h} + \frac{c}{T_n},$$ 
 which gives the proposition.}}}
\end{proof}
\begin{proof}[Proof of Proposition \ref{prop: WDCL}]
	{\modrev
		We start proving {\bf WDCL1}. Because of {\revarn Theorem \ref{thm : transition density}} we have {\revarn for $0\le t \le 2$,}
		\begin{equation*}
			p_t(x,y) \le c t^{- \frac{1}{2}} 
			e^{ - {\lambda_0} \frac{|{\revarn \theta_{t,0}(x)} - y|^2}{t} } 
			\le {\revarn {c}} t^{- \frac{1}{2}} 
				.
		\end{equation*}
		As $\sup_{y \in \mathbb{R}} \pi(y) < \infty$, it gives
		\begin{align}{\label{eq: start WDCL1}}
			 \sup_{y \in \R} \sum_{{\revarn i = 1}}^{{\revarn \tilde{i}}} |g_{t_i}(x,y)| & \le \sup_{y \in \R} 
			\sum_{{\revarn i = 1}}^{{\revarn \tilde{i}}} {\revarn  c (t_i^{- \frac{1}{2}}  + 1)}  \nonumber
			 \\
			&{\revarn \le \big( \frac{c}{\sqrt{\Delta_n}} \sum_{{\revarn i = 1}}^{{\revarn \tilde{i}}}  \frac{1}{\sqrt{i}} \big) + c \, \tilde{i} 
				\le \frac{c \sqrt{\tilde{i}}}{\sqrt{\Delta_n}} + c \, \tilde{i} =  \frac{c \sqrt{\tilde{i}\Delta_n} + \tilde{i}\Delta_n}{\Delta_n},}
		\end{align}
		where we chose $\tilde{i} = \sup \{ i \in {\revarn [1, n-1]} \mbox{ such that } i \Delta_n \le 2 \}$. Then, Equation \eqref{eq: start WDCL1} here above provides {\bf WDCL1} with
		{\revarn $u_0=2$ and $k_1 (x) = c (\sqrt{2}+2)$.} 
		\\
		We move to the proof of {\bf WDCL2}. Let us set $\varphi(\xi) := \mathbb{E}[\exp(i \xi X_t)]$ and $\varphi_x(\xi, t) := \mathbb{E}[\exp(i \xi X_t)| X_0 = x]$ and claim that there exists a constant $\hat{c} > 0$ such that for all $\xi \in \mathbb{R}$,
		\begin{equation} 
			|\varphi(\xi)| \le  \hat{c}(1 + |\xi|)^{-2}.
			\label{eq: cond varphi}
		\end{equation}
		Moreover, there exists $\tilde{c} > 0$,  such that for all $t \geq 2$, $x \in \mathbb{R}$, and $\xi \in \mathbb{R}$,
		\begin{equation}
			|\varphi_x(\xi, t)| \le \tilde{c}(1 + |\xi|)^{-2}.
			\label{eq: cond varphi x}
		\end{equation}
		We will now prove that, if the conditions \eqref{eq: cond varphi} and \eqref{eq: cond varphi x} hold true, then the result follows. After that, we will conclude our proof by proving that the above mentioned conditions are satisfied in our context. \\
		{\revarn 
Using the inverse Fourier transform, we can write
\begin{equation}\label{eq : Fourier inverse}
	2 \bm{\pi} (p_t(x,y) - \pi(y)) = \int_{\mathbb{R}} \exp(- i \xi y)(\varphi_x(\xi, t)-  \varphi(\xi)) d\xi.
\end{equation}
We set $\psi_\xi(y)=e^{i \xi y} - \int_\R e^{i \xi z} \pi(z) dz$ which is a centered function under the stationary probability, and remark that, with this notation,
$	\varphi_x(\xi, t)-  \varphi(\xi)=P_t(\psi_\xi)(x) $ where $(P_t)_{t \ge 0}$ is the semi-group of the diffusion,
$P_t(\psi_\xi)(x)=\int_\R  p_t(x,y) \psi_\xi(y)dy$.
We know from Lemma 8 in \cite{Minimax} that we have the following semi-group contraction property, with a constant $c$ uniform on $\Sigma$,
\begin{equation}{\label{eq: semi}}
	\norm{P_t(\psi_\xi)}_{L^2(\pi)} \le c e^{-t/c} \norm{\psi_\xi}_\infty.
\end{equation}
{\revarn Let us state the following lemma, whose proof is postponed to the  Appendix \ref{s: technical}.
	\begin{lemma}\label{l: hyper contract}
	There exists a constant $c>0$ depending on the class $\Sigma$, such that for  all $\psi \in L^1(\pi)$, $s\in(0,1]$, $x\in \R$, we have, $|P_s(\psi)(x)| \le \frac{c}{\sqrt{s} \pi(x)} \norm{\psi}_{L^1(\pi)}$.
	\end{lemma}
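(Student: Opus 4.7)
The plan is to combine the Aronson-type upper bound from Theorem \ref{thm : transition density} with the reversibility of the scalar diffusion \eqref{eq: model} under its invariant probability $\pi$. The starting point is the detailed balance identity
$$\pi(x)\, p_s(x,y) = \pi(y)\, p_s(y,x), \qquad x,y\in\R,\ s>0,$$
which is automatic for any one-dimensional diffusion satisfying A1--A2: the invariant density is proportional to the speed measure, so the generator $L=\frac{1}{2} a^2 \partial_{xx}+b\partial_x$ is symmetric in $L^2(\pi)$. Rearranging, one gets the pointwise relation $p_s(x,y)=\frac{\pi(y)}{\pi(x)}\, p_s(y,x)$.

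Next, I would apply Theorem \ref{thm : transition density} to the factor $p_s(y,x)$, taking $\tau=1$ so that all $s\in(0,1]$ are covered. Dropping the Gaussian factor yields
$$p_s(y,x) \le \frac{C_0}{\sqrt{s}}\exp\!\Big(-\lambda_0\frac{|\theta_{s,0}(y)-x|^2}{s}\Big) \le \frac{C_0}{\sqrt{s}},$$
with $C_0$ uniform over $(a,b)\in\Sigma$. Combined with the previous display this produces the pointwise heat-kernel comparison $p_s(x,y)\le \frac{C_0\,\pi(y)}{\sqrt{s}\,\pi(x)}$. Integrating against $|\psi(y)|$ then directly gives
$$|P_s(\psi)(x)| \le \int_\R p_s(x,y)|\psi(y)|\,dy \le \frac{C_0}{\sqrt{s}\,\pi(x)}\int_\R|\psi(y)|\pi(y)\,dy = \frac{C_0}{\sqrt{s}\,\pi(x)}\norm{\psi}_{L^1(\pi)},$$
so the announced bound holds with $c=C_0$, uniformly on $\Sigma$.

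No serious obstacle is expected: the only step that deserves care is the reversibility, which relies on the explicit speed-measure representation of $\pi$ available only in dimension one. This is precisely the feature that fails for $d\ge 2$, which is consistent with the comments in Remark \ref{R: extension}.
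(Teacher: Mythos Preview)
Your proof is correct and follows essentially the same route as the paper: both use the detailed balance identity $\pi(x)p_s(x,y)=\pi(y)p_s(y,x)$ from one-dimensional reversibility, then invoke the Gaussian upper bound of Theorem~\ref{thm : transition density} to estimate $p_s(y,x)\le C_0/\sqrt{s}$, and integrate against $|\psi(y)|\pi(y)$. The only cosmetic difference is that you first derive the pointwise kernel bound $p_s(x,y)\le C_0\pi(y)/(\sqrt{s}\,\pi(x))$ before integrating, whereas the paper works directly with $\pi(x)P_s(\psi)(x)$.
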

}	
Applying this lemma with $s=1$, we can write for $t \ge 2$,
\begin{align*}
	\abs{P_t(\psi_\xi)(x)}&=\abs{P_1( P_{t-1}(\psi_\xi))(x)} \le \frac{1}{\pi(x)} \norm{P_{t-1}(\psi_\xi)}_{L^1(\pi)}
	\\
	& \le  \frac{1}{\pi(x)} \norm{P_{t-1}(\psi_\xi)}_{L^2(\pi)} \le \frac{c}{\pi(x)} e^{-(t-1)/c} \norm{\psi_\xi}_\infty,
\end{align*}
where in the second line we used $\norm{\cdot}_{L^1(\pi)} \le \norm{\cdot}_{L^2(\pi)}$ and eventually the contraction property of the semi-group 
as in \eqref{eq: semi}. Since the functions $|\psi_\xi|$ are bounded by the constant $2$, we get
$	\abs{P_t(\psi_\xi)(x)} \le c e^{-t/c}$ for some $c>0$.

Meanwhile, as we claimed that both \eqref{eq: cond varphi} and \eqref{eq: cond varphi x} are satisfied, we have
$ \abs{P_t(\psi_\xi)(x)}=\abs{\varphi_x(\xi,t)-\varphi(\xi)} \le \frac{\hat{c}+\tilde{c}}{1+|\xi|^2}$.
Using \eqref{eq : Fourier inverse}, we deduce
\begin{align*}
	\abs{p_t(x,y) - \pi(y)} &= \frac{1}{2 \bm{\pi} }
	\abs{\int_\R \exp(-i\xi y) P_t(\Psi_\xi)(x)d\xi} \le \frac{1}{2 \bm{\pi} } \int_\R  \abs{P_t(\Psi_\xi)(x)}^{3/4}\abs{P_t(\Psi_\xi)(x)}^{1/4}d\xi
\\	&\le\frac{1}{2 \bm{\pi} } \int_\R  \abs{\frac{\hat{c}+\tilde{c}}{1+|\xi|^2 }}^{3/4}\abs{\frac{c e^{-t/c}}{\pi(x)}}^{1/4}d\xi \le \frac{c}{\pi(x)^{1/4}} e^{-t/c}
\end{align*}
for some $c>0$.
Hence, we get that there exists a finite constant $c$ such that, for all $t \ge 2$ and $x, y \in \R$,
		\begin{equation*}
\vert g_t(x,y) \vert=\pi(x)|p_t(x,y) - \pi(y)| \le c \pi(x)^{3/4} e^{-t/c}.
		\end{equation*}}
		It is worth noting that the right hand side is independent of $y$.

		Replacing it in the definition of \textbf{WDCL2} it yields
		\begin{align*}
			&\sup_{y \in \R} \Delta_n\sum_{i = \tilde{i}+1}^{{\revarn n-1}} |g_{t_i}(x,y)| \le \sup_{y \in \R}  \Delta_n \sum_{i = \tilde{i}+1}^{{\revarn n-1}}  
			{\revarn c\pi(x)^{3/4}} {\revarn e^{- t_i/c}} \\
			& \le c {\revarn  \pi(x)^{3/4} \Delta_n  \sum_{i = \tilde{i}+1}^{n-1} e^{- i \Delta_n/c}
			\le  c \pi(x)^{3/4} \int_2^{\infty} e^{- s/c} ds \le c^2 \pi^{3/4}(x).}			
		\end{align*}}
	{\revarn This implies \textbf{WDCL2} with $u_0 = 2$ and $k_2(x) = c^2 \pi(x)^{3/4}$, and we remark that $\norm{k_2}_{\infty}$ is bounded by $c^2\mathcal{L}^{3/4}$, independently of $(a,b)\in\Sigma$.}
		\\
		{\modrev To conclude, we need to demonstrate that the constraints presented in \eqref{eq: cond varphi} and \eqref{eq: cond varphi x} are satisfied.
		The proof closely follows the argumentation provided on page 7 of \cite{Ver99}. It is based on the integrability of the derivatives of the transition density, which is well-established in the case of a bounded drift (refer to Theorem 7, Chapter 9, Section 6 in Friedman \cite{Friedman64}). However, when dealing with an unbounded drift, the proof becomes more challenging and relies on some bounds {\revarn obtained in \cite{MenPes}.}
		We are ready to prove \eqref{eq: cond varphi} and \eqref{eq: cond varphi x}. Let us start with the proof of \eqref{eq: cond varphi x}. Integrating by parts and using Fubini Theorem yields
		\begin{align*}
			|\varphi_x(\xi, t)| & = |\int_{\R} \exp(i \xi y ) p_t(x,y) dy| = |\xi|^{-2} |\int_{\R} \exp(i \xi y ) \partial^2_y p_t(x,y) dy| \\
			& = |\xi|^{-2} |\int_{\R} \exp(i \xi y ) [\partial^2_y \int_{\R} p_{t-1}(x,z) p_{1}(z,y) dz ]dy| \le |\xi|^{-2} \int_{\R^2} p_{t-1}(x,z) |\partial^2_y  p_{1}(z,y)| dz \, dy \\
			& = |\xi|^{-2} \int_{\R}  p_{t-1}(x,z) \int_{\R}|\partial^2_y  p_{1}(z,y)| dy \, dz.
		\end{align*}
		We want to prove that 
		\begin{equation}{\label{eq: bound second deriv}}
			\int_{\R}|\partial^2_y  p_{1}(z,y)| dy < \infty.
		\end{equation}
		{\revarn From  the results of Section 4 of \cite{MenPes} recalled in Theorem \ref{thm : transition density}, 
			 we know that, under our hypothesis, it is} 
		$$|\partial^2_y  p_{1}(z,y)| \le {\revarn C_0} e^{- {\revarn \lambda_0} {|\theta_{1,0}(z) - y|^2}}$$
		{\revarn where $\theta_{t,s}(z)$ is the deterministic flow as introduced above Theorem \ref{thm : transition density} and $C_0$, $\lambda_0$ depends only on the class $\Sigma$.} 
		Then, the change of variable $\tilde{y} := \theta_{1,0}(z) - y$ yields 
		$$\int_{\R}|\partial^2_y  p_{1}(z,y)| dy \le \int_{\R} {\revarn C_0}e^{-  {\revarn \lambda_0} |\tilde{y}|^2} d\tilde{y},$$
		which is bounded as we wanted. It follows 
		$$|\varphi_x(\xi, t)| \le c|\xi|^{-2} \int_{\R} p_{t-1}(x,z) dz \le c|\xi|^{-2}{\revarn ,}$$
		{\revarn for some $c>0$. This gives \eqref{eq: cond varphi x}.}
		
		The same inequalities hold true for $\varphi$ and provide \eqref{eq: cond varphi}. Indeed, 
		\begin{align*}
			|\varphi(\xi)| & = |\int_{\R} \exp(i \xi y ) \pi(y) dy| = |\xi|^{-2} |\int_{\R} \exp(i \xi y ) [\partial^2_y \int_{\R} \pi(z) p_{1}(z,y) dz ]dy| \\
			& \le |\xi|^{-2} \int_{\R} \pi(z) \int_{\R}|\partial^2_y  p_{1}(z,y)| dy \, dz \le c |\xi|^{-2},
		\end{align*}
		where we have used \eqref{eq: bound second deriv} once again. 
		The proof of Proposition \ref{prop: WDCL} is therefore concluded.}
\end{proof}
	
\subsection{Proof of Theorem \ref{th: discrete d =1}}\label{Ss : proof Th upper bound}
 \begin{proof}
		If $\Delta_n \le (\frac{1}{T_n})^{\frac{1}{2 \beta}}$, then it is enough to choose $h(T_n) := (\frac{1}{T_n})^{\frac{1}{2 \beta}}$ to get
	\begin{align*}
			\mathbb{E}[|\hat{\pi}_{h,n}(x^*) - \pi (x^*)|^2] & \le ch^{2 \beta} + {\revarn \frac{c}{T_n} } + \frac{c}{T_n} \frac{\Delta_n}{h}  \le {\revar  \frac{c}{T_n},} 
		\end{align*}
		which is the first result we aimed {\revarn to show.}\\
		\\
		On the other side, when $\Delta_n > (\frac{1}{T_n})^{\frac{1}{2 \beta}}$, as $T_n = n \Delta_n$ it is also
		$$\Delta_n > (\frac{1}{n})^{\frac{1}{2 \beta + 1}}.$$
		Using the bias-variance decomposition and Proposition \ref{prop: bound var discrete d=1} it follows 
		\begin{align*}
			\mathbb{E}[|\hat{\pi}_{h,n}(x^*) - \pi (x^*)|^2] & \le ch^{2 \beta} + {\revarn \frac{c}{T_n} }  + \frac{c}{T_n} \frac{\Delta_n}{h} \\
			& \le ch^{2 \beta} + {\revarn c (\frac{1}{n})^{1 - \frac{1}{2 \beta + 1}}}  + \frac{c}{n} \frac{1}{h }. 
		\end{align*}
		We take $h(n) := (\frac{1}{n})^{\frac{1}{2 \beta + 1}}$, it yields
		$$\mathbb{E}[|\hat{\pi}_{h,n}(x^*) - \pi (x^*)|^2] \le c (\frac{1}{n})^{\frac{2 \beta}{2 \beta + 1}} + {\revar c (\frac{1}{n})^{\frac{2 \beta}{2 \beta + 1}}} + c (\frac{1}{n})^{\frac{2 \beta}{2 \beta + 1}} $$
		and so the balance is achieved with the convergence rate we wanted.
		\end{proof} 
\subsection{Proof of Theorem \ref{th: borne inf discrete}}\label{Ss : proof lower bound}

The proof of Theorem \ref{th: borne inf discrete} relies on the two {\modr hypotheses} method, as explained for example in Section 2.3 of \cite{Ts}. In the sequel we will introduce the Hellinger distance and we will need to bound it. In order to do that we will use Malliavin calculus {\revarn as it appears in Section \ref{Ss: proof of prop sans Malliavin}.}. The reader may refer to \cite{Nualart} for a detailed exposition of the subject {\revarn and we recall in Appendix \ref{S: Appendix Malliavin} useful notations and results related to the Malliavin calculus.}
\begin{proof}
	{\modr We remind that the set  $\Sigma(\beta, \mathcal{L},a_{\text{min}},(a_l)_{0\le l \le 3}, (b_l)_{0\le l \le 3},\tilde{C},\tilde{\rho})$ 
	has been introduced in Definition \ref{def: insieme sigma v2}.  Using a scaling argument, which consists in replacing the process $X$ by $\lambda X$ with $\lambda>0$, it is possible to assume that $a_{\text{min}}<1<a_0$. This choice will simplify some notations.  }
{\modr	In the proof we will lower bound the risk using in particular the following two models: 
	\begin{align*}
		dX_t = b(X_t) dt + a(X_t) dB_t,
		\quad  X_0 \sim \pi(x) dx,~ \text{has stationary distribution,}
		\\
		d\tilde{X}_t = b(\tilde{X}_t) dt + \tilde{a}(\tilde{X}_t) dB_t, 
		\quad  \tilde{X}_0 \sim \tilde{\pi}(x) dx,~ \text{has stationary distribution,}
	\end{align*}
	where we take $a(x) = 1$ and $\tilde{a}(x)= 1 + \frac{1}{M_n} \psi_{h_n}(x)$ with $\psi_{h_n}(x) = \psi(\frac{x-x^*}{h_n})$, $\psi: \R \rightarrow \R$ is a $C^\infty$ function with support on $[-1, 1]$ such that  {\revar $\psi(0)= 1$,  $\int_{-1}^1 \psi(z) dz=0$ and $\norm{\psi}_\infty \le 1$.} The quantities $M_n$ and $h_n$ will be calibrated later and satisfy $M_n \rightarrow \infty$, $h_n \rightarrow 0$ and $M_nh_n \to \infty$ for $n \rightarrow \infty$; moreover {\revarn we impose $M_n >2$, $h_n<1$, $M_n h_n^3 \ge \frac{1}{\alpha_0} $, where the constant $\alpha_0\in(0,1)$ will be fixed later.}}
	The function $b$ is such that
		\begin{align}\label{eq: def drift}	
			b(x)= -\eta \widehat{\text{sgn}} ({\modr \frac{x-x^*}{A}}), \quad
			\widehat{\text{sgn}} (x) = \begin{cases}
				0 \qquad |x| \le 1 \\
				\sgn x \qquad |x| > 2 \\
				\in (0,  \sgn x) \qquad 1 < |x| \le 2,
			\end{cases}
		\end{align}
		where $x\mapsto \widehat{\text{sgn}} (x)$ is $C^\infty$ {\modr 
			and $\eta>0$, $A>1$.}
	We build $b(x)$ such that it is a $C^\infty$ function satisfying A1-A2 with {\modr $(b_l)_{0\le l \le 3}$, $\tilde{C}$ and some $\tilde{\rho}$. The  constants $\eta$  and $A$ will be calibrated with that regard. }
	{\modr First,} we observe that both the couples of coefficients $(a, b)$ and $(\tilde{a}, b)$ satisfy {\modr the Assumptions A1.} 
	 Indeed, we have $0<1-\frac{1}{M_n}<\tilde{a}(x)<1+\frac{1}{M_n}$, {\modr $|b(0)|=\eta  |\widehat{\text{sgn}}(-x^*/A)|$, }
		 $\norm{\tilde{a}^{(k)}}_\infty \le \frac{\norm{\psi^{(k)}}_\infty}{h_n^kM_n} \le \frac{\norm{\psi^{(k)}}_\infty}{h_n^3M_n} \le \alpha_0 \norm{\psi^{(k)}}_\infty$, $\norm{b^{(k)}}_\infty \le {\modr \frac{\eta}{A^k}} \norm{\widehat{\text{sgn}}^{(k)}}_\infty$, for $k=1,\dots,3$. 
	 Thus, by letting {\modr $A\ge |x^*|$ we get $|b(0)|=0 \le b_0$ and if $\alpha_0$ and $\eta/A$ are sufficiently small,} the coefficients  $(a, b)$ and $(\tilde{a}, b)$ are satisfying {\modr A}ssumption A1 for any fixed $a_\text{min}<1<a_0$, $(a_l)_{1\le l \le 3}$, $(b_l)_{0\le l \le 3}$. 
	 {\modr We now check that the condition A2 holds with $\tilde{C}=\eta/2$ and some $\tilde{\rho}>0$.
For $|x| \ge |x^*|+2A$, we have $\frac{|x-x^*|}{A} \ge 2$, and we can write
\begin{align*}	
	xb(x)&=-\eta x\text{sgn} (\frac{x-x^\star}{A})=-\eta (x-x^*)\text{sgn} (\frac{x-x^\star}{A})
	-\eta x^* \text{sgn}(\frac{x-x^*}{A})	\\
	&\le -\eta |x-x^*| + \eta |x^*| \le -\eta |x| + 2 \eta |x^*|.
\end{align*} 
 We deduce that if $|x| \ge 4 |x^*|$, we have $xb(x) \le -\eta|x|/2 $, and the condition A2 follows with $\tilde{C}=-\eta/2$ and $\rho=\max(|x^*|+2A,4|x^*|)$. 
 It entails to set $\eta=2\tilde{C}$ while $\alpha_0$, $1/A$ can be chosen arbitrarily, up to guarantee they are small enough to ensure the constraints A1 on the coefficients $a$ and $b$.}
	
	As a consequence {\modr of A2}, we know {\revarn from Theorem 1.4 in \cite{Kut}} that both $X$ and $\tilde{X}$ admit a unique invariant distribution that we call $\mu$ (and $\tilde{\mu}$, respectively). We denote their densities as $\pi$ and $\tilde{\pi}$, respectively. \\
	Having as a purpose to show the lower bound using the two {\modr hypotheses} method, following {\revar Sections 2.2--2.4 in} \cite{Ts}, the strategy consists in finding two densities $\pi$ and $\tilde{\pi}$ such that 
	{\revar for $n$ large enough,}
	\begin{enumerate}
		\item $\pi$, $\tilde{\pi} \in \mathcal{H}(\beta,2 \mathcal{L})$, so that $(a,b)$, $(\tilde{a}, b) \in 
		{\modr \Sigma}(\beta, \mathcal{L}, a_{\text{min}},(a_l)_{0\le l \le 3},(b_l)_{0\le l \le 3},\tilde{C}, \tilde{\rho} )=: {\modr \Sigma}$.
		\item $|\tilde{\pi}({\modr x^*}) - \pi({\modr x^*})| \ge \frac{{\revar c}}{M_n}$ {\revar for some $c>0$.}
		\item {\revar $\limsup_n H^2(Law((X_{i \Delta_n})_{i = 0, \dots, n}), Law((\tilde{X}_{i \Delta_n})_{i = 0, \dots, n}) < \epsilon_0 < 2$, where $H^2$ is the squared Hellinger distance on probabilities.}
	\end{enumerate}
	Then, it follows {\revarn from Theorem 2.2 and Section 2.2. in \cite{Ts},}
	\begin{equation}
		\inf_{\tilde{\pi}_{T_n}} \sup_{(a,b) \in \Sigma} \E[(\tilde{\pi}_{T_n}({\modr x^*}) - \pi({\modr x^*}))^2] \ge {\revar \tilde{c} (\frac{1}{M_n})^{2},}
		\label{eq: result lower with calibration}
	\end{equation}
	{\revar for some $\tilde{c}>0$.}
	We now want to check that the three points here above hold true {\revar with a choice of calibration $h_n=\left( \frac{1}{n} \right)^{\frac{1}{1+2\beta}}$, $M_n=1/( \alpha_0h_n^{\beta})=\frac{n^{\frac{\beta}{1+2\beta}}}{\alpha_0}$, where $\alpha_0$ {\revarn is some sufficiently small constant in $(0,1]$.} Let us stress that this choice is consistent with $M_nh_n^3 \ge 1/\alpha_0$ as $\beta\ge 3$ and $h_n \le 1$.}
	
	{\revar \underline{Proof of point 1.}} Regarding {\revar this point, we need to prove that $\pi, \tilde{\pi} \in \mathcal{H}(\beta,{\revarn \mathcal{L}})$
		{\modr as soons as $\alpha_0>0$ and $1/A>0$ are fixed small enough}. 
		We know from Theorem 1.4 in } \cite{Kut} that
	\begin{equation}\label{eq : pi explicit}
		\pi(x) = c_\pi e^{2 \int^x_{\modr x^*} b(y) dy} {\revar  ~= c_\pi e^{-2 \eta \int^x_{\modr x^*} \widehat{\text{sgn}}({\modr \frac{y-x^*}{A}}) dy} }\qquad \mbox{and }
	\end{equation}
	\begin{equation}\label{eq : pi tilde explicit}
		{\revar 	\tilde{\pi}(x) =  \frac{\tilde{c}_\pi}{(1 + \frac{1}{M_n} \psi_{h_n}(x))^2} e^{2 \int^x_{\modr x^*}\frac{b(y)}{(1 + \frac{1}{M_n }\psi_{h_n}(y))^2} dy}=
			\frac{\tilde{c}_\pi}{(1 + \frac{1}{M_n} \psi({\modr \frac{x-x^*}{h_n}}))^2}e^{-2\eta  \int^x_{\modr x^*} \widehat{\text{sgn}}({\modr \frac{y-x^*}{A}}) dy},}
	\end{equation}
	{\revar where we used that $b=0$ on the interval {\modr $[x^*-A,x^*+A]$} which contains the support of $\psi_{h_n}$ as $h_n \le 1$ {\modr and $A\ge1$}.}
	We recall that {\modr $A \ge 1$ in the} definition of $b$ can be chosen as large as we want. 
	{\revar Using the definition of $\widehat{\text{sgn}}$ in \eqref{eq: def drift}, we have
		\begin{align*}
			1 \ge  \int_{{\modr x^*-A}}^{{\modr x^*+A}}\pi(x)dx = c_\pi \int_{{\modr x^*-A}}^{{\modr x^*+A}}
			\exp(-2\eta{\modr \int_{{\modr x^*}}^x  \widehat{\text{sgn}}((y-x^*)/A)dy}) dx =   c_\pi \int_{{\modr x^*-A}}^{{\modr x^*+A}}
			{\modr \exp(0)}dx  \ge {\modr 2A c_\pi.}			
		\end{align*}
		We deduce {\modr $c_\pi \le 1/(2A)$, and} analogously we have {\modr $\tilde{c}_\pi \le 2/A$.}
		From \eqref{eq : pi explicit} and the sign of $b$, we infer that $\norm{\pi}_\infty \le c_\pi \le {\modr 1/(2A)}$. And thus $\norm{\pi}_\infty \le \mathcal{L}$ if {\modr $1/A$ is} chosen small enough. In the same manner, and after differentiations of \eqref{eq : pi explicit}}, we see that  $\norm{\frac{\partial^k}{\partial x^k} \pi}_\infty \le {\modr c(k)/A}$ for some constants $c(k)$ and all $k \ge 1$. 
	 Choosing {\modr $1/A$} small enough, we obtain 
	$\pi \in \mathcal{H}(\beta, {\revarn \mathcal{L}})$. 
	{\revar It is more delicate to see that $\tilde{\pi} \in \mathcal{H}(\beta, {\revarn \mathcal{L}})$ under the condition 
		\begin{equation}
			\frac{1}{M_n} \frac{1}{h_n^\beta}  = \alpha_0  \le 1.
			\label{eq: cond parameter Holder}
		\end{equation}
		By differentiation of \eqref{eq : pi tilde explicit} with $\tilde{c}_\pi \le {\modr 2/A}$, we can prove that 
		$\norm{\frac{\partial^k}{\partial x^k} \tilde{\pi}}_\infty \le \frac{\tilde{c}(k)}{{\modr A}M_n h_n^k}$ for some constants $\tilde{c}(k)$ and all $k \ge 1$. Choosing {\modr $A$ large enough,} it is sufficient with \eqref{eq: cond parameter Holder} to imply that $\norm{\frac{\partial^k}{\partial x^k}\tilde{\pi}}_\infty \le \mathcal{L}$ for all
		$k \in \left\{ 0, 1, \dots , \lfloor \beta \rfloor \right\}$. It remains to prove that $\frac{\partial^{ \lfloor \beta \rfloor}}{\partial x^{ \lfloor \beta \rfloor}}\tilde{\pi}$ is $(\beta- \lfloor \beta \rfloor)$--H\"older. We proceed, as in Lemma 3 of \cite{Lower},
		\begin{align*}
			\abs{\frac{\partial^{ \lfloor \beta \rfloor}}{\partial x^{ \lfloor \beta \rfloor}}\tilde{\pi}(x+l)-\frac{\partial^{ \lfloor \beta \rfloor}}{\partial x^{ \lfloor \beta \rfloor}}\tilde{\pi}(x)}
			&= \abs{\frac{\partial^{ \lfloor \beta \rfloor}}{\partial x^{ \lfloor \beta \rfloor}}\tilde{\pi}(x+l)-\frac{\partial^{ \lfloor \beta \rfloor}}{\partial x^{ \lfloor \beta \rfloor}}\tilde{\pi}(x)}^{\beta-\lfloor \beta \rfloor}
			\abs{\frac{\partial^{ \lfloor \beta \rfloor}}{\partial x^{ \lfloor \beta \rfloor}}\tilde{\pi}(x+l)-\frac{\partial^{ \lfloor \beta \rfloor}}{\partial x^{ \lfloor \beta \rfloor}}\tilde{\pi}(x)}^{1-\beta-\lfloor \beta \rfloor} 
			\\ & \le  \norm{\frac{\partial^{ \lfloor \beta \rfloor+1}}{\partial x^{ \lfloor \beta \rfloor}}\tilde{\pi}}_\infty^{{\beta-\lfloor \beta \rfloor}} |l|^{\beta-\lfloor \beta \rfloor}
			\left(2 \norm{\frac{\partial^{ \lfloor \beta \rfloor}}{\partial x^{ \lfloor \beta \rfloor}}\tilde{\pi}}_\infty\right)^{1-\beta-\lfloor \beta \rfloor} 
			\\ & \le \left(\frac{ \tilde{c}(\lfloor \beta \rfloor+1) }{{\modr A}M_n h_n^{\lfloor \beta \rfloor+1}}\right)^{\beta-\lfloor \beta \rfloor}
			\left(2 \frac{\tilde{c}(\lfloor \beta \rfloor)}{{\modr A}M_n h_n^{\lfloor\beta\rfloor}} \right)^{1-\beta-\lfloor \beta \rfloor}  |l|^{\beta-\lfloor \beta \rfloor}
			\\& =  \frac{1}{{\modr A}M_n h_n^\beta}  \tilde{c}(\lfloor \beta \rfloor+1)^{\beta-\lfloor \beta \rfloor} (2 \tilde{c}(\lfloor \beta \rfloor))^{1-\beta-\lfloor \beta \rfloor} 
			|l|^{\beta-\lfloor \beta \rfloor} 
			\\
			& \le \mathcal{L} |l|^{\beta-\lfloor \beta \rfloor} 
		\end{align*}
		where in the last line we used \eqref{eq: cond parameter Holder} and the fact that {\modr $A$ can be chosen large enough.} Remark that the choices of ${\modr A}$ {\revarn and $\alpha_0$} depend only on the values {\modr $\beta$, $\mathcal{L}$, $|x^*|$} and is fixed from now on.
	}
	\\
	{\revar \underline{Proof of point 2.}}
	We observe it is $\pi({\modr x^*}) = c_\pi$ and {\revar $\tilde{\pi}({\modr x^*}) = \frac{\tilde{c}_\pi}{(1 + \frac{1}{M_n})^2}$.}
	We know that
	$$1 = \int_{\R} \pi(x) dx = c_\pi \int_{\R} e^{2 \int_{{\modr x^*}}^x b(y) dy} dx $$
	and so 
	$$\frac{1}{c_\pi} = \int_{\R} e^{2 \int_{{\modr x^*}}^x b(y) dy} dx.$$
	In the same way
	$$1= \int_{\R} \tilde{\pi}(x) dx =
	\tilde{c}_\pi \int_{\R} \frac{1}{\revar (1 + \frac{1}{M_n} \psi_{h_n}(x))^2} e^{2 \int_{{\modr x^*}}^x 
	{\revar  b(y)} dy} dx.
$$
{\revar 
	We can therefore write, recalling that the support of $\psi_{h_n}$ is $[{\modr x^*}-h_n,{\modr x^*}+h_n]$,}
\begin{align*}
	\frac{1}{\tilde{c}}_\pi &= \int_{\R} \frac{1}{{\revar (1 + \frac{1}{M_n} \psi_{h_n}(x))^2}} e^{2 \int_{{\modr x^*}}^x b(y) dy} dx \\
	& = \int_{|x{\modr - x^*}| > h_n} e^{2 \int_{{\modr x^*}}^x b(y) dy} dx + \int_{|x{\modr - x^*}| \le h_n} \frac{1}{{\revar (1 + \frac{1}{M_n} \psi_{h_n}(x))^2}} e^{2 \int_{{\modr x^*}}^x b(y) dy} dx \\
	& = \frac{1}{c_\pi} + \int_{|x{\modr - x^*}| \le h_n} (\frac{1}{{\revar (1 + \frac{1}{M_n} \psi_{h_n}(x))^2}}-1) e^{2 \int_{{\modr x^*}}^x b(y) dy} dx.
\end{align*}
Remarking that 
$$\sup_{x \in \R} |\frac{1}{{\revar (1 + \frac{1}{M_n} \psi_{h_n}(x))^2}}-1) |= {\revar \frac{-\frac{2}{M_n}\psi_{h_n}(x)+\frac{1}{M_n^2}\psi_{h_n}(x)^2}{ (1 + \frac{1}{M_n} \psi_{h_n}(x))^2}
	=O(\frac{1}{M_n}),}$$
it is easy to see that 
$$\frac{1}{\tilde{c}_\pi} - \frac{1}{c_\pi} = O(\frac{h_n}{M_n})$$
and so it is also 
{\revar 
	\begin{equation} \label{eq : comparaison c c tilde}
		\tilde{c}_\pi - c_\pi= O(\frac{h_n}{M_n}).
\end{equation}}
It follows 
\begin{align*}
	|\tilde{\pi} (0) - \pi(0)| &  = |\frac{\tilde{c}_\pi}{{\revar (1 + \frac{1}{M_n})^2}} - c_\pi| \\
	&={\revar | \frac{\tilde{c}_\pi-c_\pi}{(1 + \frac{1}{M_n})^2}  - c_\pi \frac{\frac{2}{M_n}+\frac{1}{M_n^2}}{(1 + \frac{1}{M_n})^2} | }
	\\&= {\revar\frac{2 c_\pi}{M_n} + O(\frac{h_n}{M_n}) + O(\frac{1}{M_n^2})}
	\\ & {\revar \ge \frac{c}{M_n}}
\end{align*}
which implies the point 2, as we wanted. \\
\\
{\revar \underline{Proof of point 3.}} 
We first recall some properties of total variation and Hellinger distance (refer to Section 2.4 in \cite{Ts}). 
Let $\mathbb{P}$ and $\mathbb{Q}$ be two probability measures on the probability space $(\Omega, \mathcal{F})$, dominated by $\mu$. 
the Hellinger distance $H$ is defined by 
$$H^2(\mathbb{P}, \mathbb{Q}) := \int_{\Omega} \big(\sqrt{\frac{d \mathbb{P}}{d\mu}} - \sqrt{\frac{d \mathbb{Q}}{d\mu}}\big)^2 d\mu.$$
For a product measure we have the following tensorization property, 
$$H^2 (\otimes_{i = 1}^n \mathbb{P}_i, \otimes_{i = 1}^n \mathbb{Q}_i) \le \sum_{i = 1}^n H^2(\mathbb{P}_i, \mathbb{Q}_i).$$
Such a property has been extended to the distribution of Markov chains in Proposition 2.1 of \cite{Clement}. In particular, adapting the results in \cite{Clement} to our framework, we have $(X_{i\Delta_n})_{i \ge 0}$ and $(\tilde{X}_{i \Delta_n})_{i \ge 0}$ two homogeneous Markov chains on $\R$ with transition densities $p$ and $q$ with respect to the Lebesgue measure. The conditional Hellinger distance given $X_0 = \tilde{X}_0 = x$ is defined by 
$$H^2_x(p,q) = \int_{\R}(\sqrt{p(x,y)} - \sqrt{q(x,y)})^2 dy.$$
We denote by $\mathbb{P}^n$ and $\mathbb{Q}^n$ the laws of $X_0, X_{\Delta_n}, \dots , X_{n \Delta_n}$ and $\tilde{X}_0, \tilde{X}_{\Delta_n}, \dots , \tilde{X}_{n \Delta_n}$, respectively. We know that
$${\revar \frac{\mathbb{P}^n(dx)}{dx}} = \prod_{i = 0}^{n- 1} p_{\Delta_n}(x_i, x_{i + 1}) \quad \mbox{and } {\revar \frac{\mathbb{Q}^n(dx)}{dx}} = \prod_{i = 0}^{n- 1} q_{\Delta_n}(x_i, x_{i + 1}),$$
where
$$p_{\Delta_n}(x, y) dy = \mathbb{P}(X_{\Delta_n}\in  dy | X_0 = x), $$
$$q_{\Delta_n}(x, y) dy = \mathbb{P}(\tilde{X}_{\Delta_n}\in  dy | X_0 = x). $$
{\revarn From a slight extension of Proposition 2.1 in \cite{Clement}, which allows for different initial laws and can be obtained by the same induction argument
	as in the proof of Proposition 2.1 in \cite{Clement} },
we have that 
\begin{equation}
H^2(\mathbb{P}^n, \mathbb{Q}^n) \le \frac{1}{2} \sum_{i = 0}^{n -1} \left[\E[H^2_{X_{i \Delta_n}}(p_{\Delta_n}, q_{\Delta_n})] + \E[H^2_{\tilde{X}_{i \Delta_n}}(p_{\Delta_n}, q_{\Delta_n})]\right] 
+{H^2(\pi,\tilde{\pi}).}
\label{eq: hellinger in clement}
\end{equation}
{\modarn Using \eqref{eq : pi explicit}--\eqref{eq :  pi tilde explicit} {\revar with \eqref{eq : comparaison c c tilde},} one can show that 
\begin{equation} \label{eq : Hellinger initiale}
	H^2(\pi,\tilde{\pi}) \le \frac{C h_n}{M_n^2}.
\end{equation}}
Hence, we need to control the conditional {\revar squared} Hellinger distance $H^2_x(p_{\Delta_n}, q_{\Delta_n})$. With this purpose in mind, we interpolate between the two laws $X$ and $\tilde{X}$. For $\epsilon \in [0, 1]$ we define
\begin{equation}
dX_t^\epsilon = b(X_t^\epsilon) dt + (1 + \epsilon \frac{1}{M_n} \psi_{h_n}(X_t^\epsilon))d{\modarn B_t}, \qquad \widehat{X}_0 = x_0,
\label{eq: model epsilon}
\end{equation}
{\revar and let $p_{\Delta_n}^\epsilon(x_0, y)$ denote} the density of $X_{\Delta_n}^\epsilon$. We observe that $p_{\Delta_n}^0 = p_{\Delta_n}$, while $p_{\Delta_n}^1 = q_{\Delta_n}$. \\
As the coefficients are $C^\infty$ and bounded, the function $(x, y, \epsilon) \mapsto p_{\Delta_n}^\epsilon(x, y)$ is smooth. Then, it is possible to bound the conditional Hellinger distance as below: 
\begin{align*}
H^2_{x_0}(p_{\Delta_n}, q_{\Delta_n}) & = \int_{\R}(\sqrt{p^0_{\Delta_n}(x_0,y)} - \sqrt{p^1_{\Delta_n}(x_0,y)})^2 dy \\
&= \int_{\R}(\int_0^1 \frac{\dot{p}^\epsilon_{\Delta_n}(x_0,y)}{\sqrt{p^\epsilon_{\Delta_n}(x_0,y)}} d\epsilon)^2 dy.
\end{align*}
We recall that, as the volatility coefficient is uniformly lower bounded, it is $p^\epsilon_{\Delta_n}> 0$. From Jensen inequality and {\revar Fubini--Tonelli's} theorem we get that the term here above is upper bounded {\revar in the following way }
\begin{align}\nonumber
H^2_{x_0}(p_{\Delta_n},q_{\Delta_n}) &\le  \int_{\R}\int_0^1 (\frac{\dot{p}^\epsilon_{\Delta_n}(x_0,y)}{\sqrt{p^\epsilon_{\Delta_n}(x_0,y)}})^2 d\epsilon dy \\
\nonumber
&= \int_0^1 \int_{\R} (\frac{\dot{p}^\epsilon_{\Delta_n}(x_0,y)}{\sqrt{p^\epsilon_{\Delta_n}(x_0,y)}})^2  dy d\epsilon \\
\nonumber
& {= } \int_0^1 \int_{\R} ( \frac{\dot{p}^\epsilon_{\Delta_n}(x_0,y)}{p^\epsilon_{\Delta_n}(x_0,y)})^2
{ p^\epsilon_{\Delta_n}(x_0,y)}  dy d\epsilon  
\\ \label{eq: majo H2 par Fisher}
& = \int_0^1 { \E_{x_0}} [(\frac{\dot{p}^\epsilon_{\Delta_n}(x_0,{ X^\epsilon_{\Delta_n}})}{p^\epsilon_{\Delta_n}(x_0,{ X^\epsilon_{\Delta_n}})})^2] d\epsilon ,
\end{align}
{\modarn  having denoted as $\E_{x_0}[\cdot]$ the conditional expectation given $X_0^\epsilon = x_0$.}
{\revar  We recognize in the above integral the Fisher information of the statistical model $\epsilon \to X^\epsilon_{\Delta_n}$. 
We now state the following control whose proof is delayed to Section \ref{Ss: proof of prop sans Malliavin}. 
\begin{proposition} Assume that $h_n \le \Delta_n$, $1/M_n = O(h_n^\beta) $ with $\beta\ge 3$ 
	{\revarn and $\Delta_n=O(n^{-\varepsilon})$ for some $\varepsilon>0$.  Then,}
	\label{prop: control Fisher sans Malliavin} 
	there exists a constant $\hat{c}>0$ such that for all $\epsilon \in [0,1]$,
	\begin{equation*}
		\int_{\mathbb{R}} \E_{x_0} [(\frac{\dot{p}^\epsilon_{\Delta_n}(x_0,{ X^\epsilon_{\Delta_n}})}{p^\epsilon_{\Delta_n}(x_0,{X^\epsilon_{\Delta_n}})})^2] \pi^\epsilon(x_0)dx_0
		\le \hat{c} \frac{h_n}{M_n^2},
	\end{equation*} 
	where $\pi^{\epsilon}$ denotes the density of the stationary distribution of $X^{\epsilon}$.
\end{proposition}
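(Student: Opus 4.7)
The plan is to exploit a Malliavin integration-by-parts formula to represent the score function $\frac{\dot{p}^\epsilon_{\Delta_n}}{p^\epsilon_{\Delta_n}}(x_0,y)$ as a conditional expectation of a Malliavin weight $H^\epsilon_{\Delta_n}$, then reduce the Fisher information bound to an $L^2(\pi^\epsilon)$ estimate on that weight. Concretely, differentiating the flow of \eqref{eq: model epsilon} with respect to $\epsilon$ produces a tangent process driven by $\dot a^\epsilon(x) = \tfrac{1}{M_n}\psi_{h_n}(x)$, and a Skorohod-type integration-by-parts in the spirit of Gobet and Clément yields
\begin{equation*}
\frac{\dot{p}^\epsilon_{\Delta_n}(x_0,y)}{p^\epsilon_{\Delta_n}(x_0,y)} = \E_{x_0}\!\left[H^\epsilon_{\Delta_n}\;\big|\;X^\epsilon_{\Delta_n}=y\right],
\end{equation*}
where $H^\epsilon_{\Delta_n}$ is an explicit functional of $\psi_{h_n}(X^\epsilon_\cdot)$, the Malliavin derivatives $D_s X^\epsilon_{\Delta_n}$, and the (scalar) Malliavin covariance $\int_0^{\Delta_n} (a^\epsilon)^2(X^\epsilon_s)\,ds$, the latter being uniformly elliptic by A1.

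Applying Jensen's inequality inside the conditional expectation and integrating in $y$ against $p^\epsilon_{\Delta_n}(x_0,\cdot)$ gives
\begin{equation*}
\E_{x_0}\!\left[\Big(\tfrac{\dot{p}^\epsilon_{\Delta_n}(x_0,X^\epsilon_{\Delta_n})}{p^\epsilon_{\Delta_n}(x_0,X^\epsilon_{\Delta_n})}\Big)^{\!2}\right] \le \E_{x_0}\!\left[(H^\epsilon_{\Delta_n})^2\right],
\end{equation*}
so the proposition reduces to the stationary bound $\E_{\pi^\epsilon}[(H^\epsilon_{\Delta_n})^2] \le \hat c\, h_n/M_n^2$. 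Expanding the square of $H^\epsilon_{\Delta_n}$ produces a global prefactor $1/M_n^2$ because $\dot a^\epsilon=\psi_{h_n}/M_n$ appears linearly in every term. The remaining integrand is concentrated on the strip $\{|x-x^*|\le h_n\}$ through the factor $\psi_{h_n}^2(X^\epsilon_\cdot)$. Under the stationary distribution, the direct deterministic heuristic
\begin{equation*}
\E_{\pi^\epsilon}\!\left[\int_0^{\Delta_n}\psi_{h_n}^2(X^\epsilon_s)\,ds\right] = \Delta_n\int_\R\psi_{h_n}^2(x)\pi^\epsilon(x)\,dx \le C\,\Delta_n\,h_n
\end{equation*}
already yields the correct scaling, but only for the pure Itô isometry part of $H^\epsilon_{\Delta_n}$.

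The main obstacle is the sharpness: several terms of $(H^\epsilon_{\Delta_n})^2$ contain singular weights of the type $s^{-\gamma}$ with $\gamma>1/2$ coming from the Malliavin derivative kernel, and plain $L^2$ moments would produce a factor $\Delta_n$ rather than $h_n$, which would break the intermediate-regime rate. The remedy is to first take the \emph{conditional} expectation of these singular Malliavin terms given the endpoints of the diffusion; via the Lamperti transform the problem becomes a control of
\begin{equation*}
\E_y\!\left[\int_0^{\Delta_n}\frac{dL^z_s(\tilde B)}{s^\gamma}\,\Big|\,\tilde B_{\Delta_n}=x\right],
\end{equation*}
for which Lemma~\ref{L: majo func Local time cond Delta} extends the Gradinaru et al. estimates to the range $\gamma>1/2$ relevant here. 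Plugging this into the key Lemma~\ref{l: main term malliavin} gives, after integrating against $\pi^\epsilon$, exactly the factor $h_n$ needed.

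Finally, to close the estimate one combines the prefactor $1/M_n^2$, the local-time-driven factor $h_n$, and a stationarity/ergodicity argument that uses the assumptions $h_n\le\Delta_n$, $1/M_n=O(h_n^\beta)$, $\beta\ge3$ and $\Delta_n=O(n^{-\varepsilon})$ only to absorb lower-order remainder terms (in particular the contribution of $H^2(\pi,\tilde\pi)$ of \eqref{eq : Hellinger initiale}, and the off-diagonal terms arising from the drift $b$, which is bounded and harmless on the stationary time scale). The regularity A1 of class $\mathcal C^3$ is used precisely to justify the Malliavin-differentiability of $X^\epsilon$ and the manipulations leading to $H^\epsilon_{\Delta_n}$, while the ellipticity $a_{\min}\le a^\epsilon \le 1+1/M_n$ ensures invertibility of the Malliavin covariance uniformly in $\epsilon$.
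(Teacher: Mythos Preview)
Your overall strategy matches the paper's: represent the score as the conditional expectation of a Malliavin weight, split the weight, and handle the principal piece via conditional local-time bounds (Lemma~\ref{l: main term malliavin}). However, your write-up contains a genuine logical gap. You first apply Jensen globally to reduce the statement to $\E_{\pi^\epsilon}[(H^\epsilon_{\Delta_n})^2]\le \hat c\,h_n/M_n^2$, and only afterwards observe that the unconditional $L^2$-norm is too large for the singular terms and that one must ``first take the conditional expectation''. These two steps are incompatible: once Jensen has removed the conditioning, it cannot be reinstated. The paper is explicit on this point (see the sentence following \eqref{eq: 28.5}): for the principal term $\hat I_3=\int\E_{x_0}[(\E_{x_0}[-\frac{1}{2\Delta_n M_n}\int_0^{\Delta_n}\psi''_{h_n}(\widehat X^\epsilon_s)s\,ds\mid \widehat X^\epsilon_{\Delta_n}])^2]\pi^\epsilon(x_0)dx_0$, the unconditional second moment does not yield the factor $h_n$, and one must bound the squared conditional expectation directly via Lemma~\ref{l: main term malliavin}. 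The correct logical order is therefore: split the weight \emph{first} (into $\widehat W^1$, $I_1$, $I_2$, $I_3$ and remainders), apply Jensen only to the subordinate pieces, and keep the conditioning for $\hat I_3$.

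A few secondary inaccuracies. (i) The paper does not use a Lamperti transform; instead it removes the drift by a coupling on a high-probability event (Lemma~\ref{l: AB}), exploiting that $b\equiv 0$ on a neighbourhood of $x^*$ by construction. (ii) The hypothesis $\Delta_n=O(n^{-\varepsilon})$ is used precisely at that step, to turn $\mathbb P(\Omega_n^c)\le c\Delta_n^{\gamma r'}$ into an arbitrary negative power of $n$; it is not merely cosmetic. (iii) Your mention of $H^2(\pi,\tilde\pi)$ and \eqref{eq : Hellinger initiale} is misplaced: that term appears in the tensorization step of Theorem~\ref{th: borne inf discrete}, not in the present proposition, which concerns only the one-step Fisher information.
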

Remark that the choices for $h_n$ and $M_n$ with the condition $\Delta_n \ge (\frac{1}{T_n})^{1/(2\beta)}$ implies that $h_n \le \Delta_n$ and $1/M_n = O(h_n^\beta) $. Hence, we can use the previous proposition.
Before this, we note that in the same manner as \eqref{eq : pi tilde explicit} and \eqref{eq : comparaison c c tilde} are obtained,  the stationary distribution of $X^\epsilon$ is explicitly given by 
\begin{equation}\label{eq : pi epsilon explicit}
	\revar 	{\pi^\epsilon}(x) =  \frac{{c}_{\pi^\epsilon}}{(1 + \frac{\epsilon}{M_n} \psi_{h_n}(x))^2} e^{-2\eta \int_{{\modr x^*}}^x
		 \widehat{\text{sgn}}({\modr \frac{y-x^*}{A}}) dy},
\end{equation}
with ${c}_{\pi^\epsilon}-c_\pi=O(\frac{h_n}{M_n})$. Remarking that $c_\pi$ is a constant independent of $n$ and $\epsilon$, and  by comparison of the 
expressions \eqref{eq :  pi explicit}, \eqref{eq :  pi tilde explicit} and  \eqref{eq : pi epsilon explicit} we deduce that 
$\frac{\pi}{\pi^\epsilon}$ and $\frac{\tilde{\pi}}{\pi^\epsilon}$ are bounded by constants independent of $\epsilon$ and $n$.
}
{\revar Now, we integrate \eqref{eq: majo H2 par Fisher} with respect to $\pi(x_0)dx_0$, 
and infer from the Proposition \ref{prop: control Fisher sans Malliavin} that
\begin{equation*}
	\E[H^2_{X_{i \Delta_n}}(p_{\Delta_n}, q_{\Delta_n})]= \int_{\R} H^2_{x_0}(p_{\Delta_n}, q_{\Delta_n}) \pi(x_0)dx_0 \le \int_0^1 
	\int_{\R} \E_{x_0} [(\frac{\dot{p}^\epsilon_{\Delta_n}(x_0,{ X^\epsilon_{\Delta_n}})}{p^\epsilon_{\Delta_n}(x_0,{ X^\epsilon_{\Delta_n}})})^2] \pi(x_0) dx_0d\epsilon		
	\le c \frac{h_n}{M_n^2}.
\end{equation*}	
Similarly, integrating \eqref{eq: majo H2 par Fisher} with respect to $\tilde{\pi}(x_0)dx_0$ yields to $\E[H^2_{\tilde{X}_{i \Delta_n}}(p_{\Delta_n}, q_{\Delta_n})] \le c \frac{h_n}{M_n^2}$. 
{\modarn }
Then, 
\eqref{eq: hellinger in clement} and 
\eqref{eq : Hellinger initiale}
provide
\begin{equation*}
	H^2(\mathbb{P}^n, \mathbb{Q}^n ) \le  {c} n\frac{h_n}{M_n^2}.
\end{equation*}
}
It follows that the third point of the scheme for the lower bound through the two {\modr hypotheses} method is satisfied, up to requiring the following condition:
\begin{equation}
{\revarn \limsup_n  \frac{nh_n}{M_n^2} <}{\modchi \frac{\epsilon_0}{{c}} ,}
\label{eq: cond laws}
\end{equation}
{\modchi with $\epsilon_0$ small enough}. {\revar Recalling $M_n=1/(\alpha_0 h_n^\beta)$ and $h_n=n^{-1/(1+2\beta)}$, we deduce $n\frac{h_n}{M_n^2}=\alpha_0^2 n h_n^{1+2\beta}=\alpha_0^2$. Thus, \eqref{eq: cond laws}  
holds true as soon as $\alpha_0^2 < \epsilon_0/{c}$, which is a feasible choice as 
$\alpha_0 \in (0,1]$ is arbitrary. This proves \eqref{eq: result lower with calibration}.}
\end{proof}

\subsection{Proof of Proposition \ref{prop: control Fisher sans Malliavin}}
	\label{Ss: proof of prop sans Malliavin}

The key point is that we have a Malliavin representation for the score function: 
\begin{equation}\label{eq : Score by Malliavin}
	\frac{\dot{p}^\epsilon_{\Delta_n}(x_0,y)}{p^\epsilon_{\Delta_n}(x_0,y)} = 
{\modarn \E_{x_0}}[W_{x_0, \Delta_n, \epsilon} | X_{\Delta_n}^\epsilon = y]
\end{equation}
for some {\revar random variable $W_{x_0, \Delta_n, \epsilon}$ which is usually referred as} Malliavin weight.
{\revar  The reader may find in Nualart \cite{Nualart} a detailed exposition on Malliavin calculus. In Appendix \ref{s: Malliavin sub section recall}, we also provide a recap of the notations and main properties of the Malliavin operators utilized in this paper. Specifically, the Malliavin operators are defined within the underlying Hilbert space $H = L^2([0, \revar \Delta_n])$. We denote by $\delta$ the Skorohod integral, which is defined as the adjoint operator of the Malliavin operator $D$. Additionally, $\bracket{\cdot}{ \cdot }$ represents the scalar product in $L^2([0, \Delta_n])$, and $\norm{\cdot}_H$ signifies the corresponding norm.}
	  From Theorem 5 in \cite{Glo_Gob} we know it is 
\begin{equation}
W_{x_0, \Delta_n, \epsilon} = \delta \big(\frac{D_\cdot X_{\Delta_n}^\epsilon \dot{X}_{\Delta_n}^\epsilon}{\bracket{D_\cdot X_{\Delta_n}^\epsilon}{D_\cdot X_{\Delta_n}^\epsilon }	}\big),
\label{eq: Malliavin weight}
\end{equation}
 {\revar and $\dot{X}_{\Delta_n}^\epsilon =\frac{\partial  X_{\Delta_n}^\epsilon}{\partial \epsilon}$
	is solution to the {\modr SDE} obtained by formal differentiation of \eqref{eq: model epsilon}:
	\begin{equation}
		\label{eq: X dot avec drift}
		\dot{X}^\epsilon_{t}=\int_0^t b'(X^\epsilon_s) \dot{X}^\epsilon_{s} ds + \int_0^t \left[\frac{\epsilon}{M_n} \psi'_{h_n}(X_s^\epsilon)\dot{X}^\epsilon_{s} + \frac{1}{M_n} \psi_{h_n}(X_s^\epsilon) \right] dB_s.
	\end{equation}
	For the sake of readability, we give in Appendix \ref{Ss: proof of score by Malliavin} a short proof of the formulae \eqref{eq : Score by Malliavin}--\eqref{eq: Malliavin weight}.} 
{\modc We remark that the terms in \eqref{eq: Malliavin weight} depend on $x_0$ as $X_{\Delta_n}^\epsilon$ is actually $X_{\Delta_n}^{\epsilon, x_0}$. 
The Malliavin weight can be {\revarn bounded} as in the following {\revar proposition, which will be {\revarn shown} in the Section \ref{S: study of Malliavin weight}.} 
\begin{proposition} {\revarn  
		Assume that $h_n \le \Delta_n$, $1/M_n = O(h_n^\beta) $ with $\beta\ge 3$ 
		{\revarn and $\Delta_n=O(n^{-\varepsilon})$ for some $\varepsilon>0$.}		}
Let $(X_{i \Delta_n}^\epsilon)_{i = 0, \dots , n-1}$ be the discrete observations of the process solution to \eqref{eq: model epsilon} and $W_{x_0, \Delta_n, \epsilon}$ the Malliavin weight as in \eqref{eq: Malliavin weight}. Then, there exists a constant $\hat{c} > 0$ such that, for any $\epsilon \in [0, 1]$,
{\revar
	\begin{equation}	\label{eq : but control Malliavin}
		|\int_{\mathbb{R}}\mathbb{E}_{x_0} [\big(\E_{x_0}[W_{x_0, \Delta_n, \epsilon} | X_{\Delta_n}^\epsilon]\big)^2]
		\pi^{\epsilon}(x_0)
		dx_0 | \le \hat{c} \frac{h_n}{M_n^2},
	\end{equation}}
	where $\pi^{\epsilon}$ denotes the density of the stationary {\revar distribution} of $X^{\epsilon}$.
\label{prop: bound Malliavin}
\end{proposition}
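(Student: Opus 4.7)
The plan is to peel off explicit expressions for $D_s X^\epsilon_{\Delta_n}$, $\dot X^\epsilon_{\Delta_n}$, and $\Norm{D X^\epsilon_{\Delta_n}}_H^2$, reduce the Skorohod integral \eqref{eq: Malliavin weight} to a principal term plus a small remainder, and then bound $\E_{x_0}[W_{x_0,\Delta_n,\epsilon}\mid X^\epsilon_{\Delta_n}]$ sharply by means of Brownian--bridge local time estimates. First I would use the standard representation of Malliavin derivatives of a diffusion: $s\mapsto D_s X^\epsilon_{\Delta_n}$ on $[0,\Delta_n]$ equals $\tilde a_\epsilon(X^\epsilon_s)\,\mathcal{E}_{s,\Delta_n}$, where $\mathcal{E}_{s,t}$ is the stochastic exponential built from $b'(X^\epsilon_u)$ and $\tilde a'_\epsilon(X^\epsilon_u)=\frac{\epsilon}{M_n}\psi'_{h_n}(X^\epsilon_u)$. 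Since $\tilde a_\epsilon=1+O(1/M_n)$ and, under the assumptions $M_n h_n^3\ge 1/\alpha_0$, $h_n\le\Delta_n$, $\Delta_n=O(n^{-\varepsilon})$, every $L^p$ norm of the exponent in $\mathcal{E}_{s,\Delta_n}$ is small, I get $D_s X^\epsilon_{\Delta_n}=1+R_s$ with quantitative $L^p$ control on $R_s$, and $\Norm{D X^\epsilon_{\Delta_n}}_H^2=\Delta_n(1+\tilde R)$. Variation of constants applied to \eqref{eq: X dot avec drift} yields $\dot X^\epsilon_{\Delta_n}=\frac{1}{M_n}\int_0^{\Delta_n}\mathcal{E}'_{s,\Delta_n}\,\psi_{h_n}(X^\epsilon_s)\,dB_s$ plus a lower order drift term, so $\dot X^\epsilon_{\Delta_n}$ is of size $1/M_n$.

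Applying $\delta(F u)=F\,\delta(u)-\bracket{DF}{u}$ to the argument of $\delta$ in \eqref{eq: Malliavin weight}, and substituting the previous expansions, I split
\begin{equation*}
W_{x_0,\Delta_n,\epsilon}=W^{\text{princ}}+W^{\text{rem}},\qquad W^{\text{princ}}=\frac{\dot X^\epsilon_{\Delta_n}}{\Delta_n}\,B_{\Delta_n}-\frac{1}{\Delta_n}\int_0^{\Delta_n} D_s \dot X^\epsilon_{\Delta_n}\,ds,
\end{equation*}
where $W^{\text{rem}}$ collects contributions linear or higher order in $R_s$ and $\tilde R$. A direct $L^2$ Malliavin estimate combined with the $L^p$ controls on $R_s,\tilde R$ shows $\E_{x_0}[(W^{\text{rem}})^2]\le c\, h_n/M_n^2$ uniformly in $x_0$, so after Jensen's inequality the remainder already meets the target in \eqref{eq : but control Malliavin}.

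The main obstacle is the sharp bound on $\E_{x_0}[W^{\text{princ}}\mid X^\epsilon_{\Delta_n}=y]$: a blind $L^2$ bound on $W^{\text{princ}}$ loses the factor $\Delta_n/h_n$, so one really has to exploit the conditioning. Plugging in the explicit expression for $\dot X^\epsilon_{\Delta_n}$ and computing $D_s\dot X^\epsilon_{\Delta_n}$, the conditional expectation reduces, after Itô manipulations, to quantities of the form $\E_{x_0}\bigl[\int_0^{\Delta_n}\psi_{h_n}(X^\epsilon_s)\,k(s,X^\epsilon_s)\,ds\mid X^\epsilon_{\Delta_n}=y\bigr]$ with bounded kernels $k$. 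Freezing $\tilde a_\epsilon\equiv 1$ and absorbing $b$ through a Girsanov-type change of measure turns these into conditional expectations over a Brownian bridge, and by the occupation time formula they become integrals of the conditional local time of the bridge at levels in the support of $\psi_{h_n}$. At this point the estimates gathered in Lemma~\ref{L: majo func Local time cond Delta} (extending those of \cite{Gradinaru_et_al_1999}) and their application in Lemma~\ref{l: main term malliavin}, which holds specifically in dimension one, provide a pointwise bound of the form $C\,\psi_{h_n}(\cdot)\,g(\Delta_n,y-\theta_{\Delta_n,0}(x_0))$ where $g$ is compatible with the Gaussian control of Theorem~\ref{thm : transition density}.

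Finally, squaring the conditional expectation and integrating first with respect to $p^\epsilon_{\Delta_n}(x_0,y)\,dy$ using the Gaussian upper bound of Theorem~\ref{thm : transition density}, and then against $\pi^\epsilon(x_0)\,dx_0$ (which is bounded uniformly in $\epsilon$ and $n$), the support condition $\mathrm{supp}\,\psi_{h_n}\subset[x^*-h_n,x^*+h_n]$ delivers an additional factor $h_n$ from the $x_0$-integration. Combined with the factor $1/M_n^2$ already present in $|\dot X^\epsilon_{\Delta_n}|^2$, this produces exactly the bound $\hat c\,h_n/M_n^2$ claimed in \eqref{eq : but control Malliavin}.
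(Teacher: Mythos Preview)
Your outline matches the paper's route: linearise $D_sX^\epsilon_{\Delta_n}\approx 1$, split $W$ into a principal part plus lower-order remainders, and control the delicate piece through Brownian-bridge local-time estimates (Lemmas~\ref{l: main term malliavin} and~\ref{L: majo func Local time cond Delta}). The paper likewise separates your $W^{\text{princ}}$ into $\widehat W^1=\frac{\dot X^\epsilon_{\Delta_n}}{\Delta_n}B_{\Delta_n}$ and $\widehat W^2=-\bracket{DX^\epsilon_{\Delta_n}}{D\dot X^\epsilon_{\Delta_n}}/\Norm{DX^\epsilon_{\Delta_n}}^2_H$, but handles $\widehat W^1$ by a direct $L^2$ argument (It\^o product formula plus integration against $\pi^\epsilon$), reserving the local-time machinery for $\widehat W^2$ alone.

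One point in your sketch is inaccurate and hides the real difficulty: the conditional expectation does \emph{not} reduce to $\int_0^{\Delta_n}\psi_{h_n}(X^\epsilon_s)\,k(s,\cdot)\,ds$ with \emph{bounded} $k$. After the It\^o step the leading term is $\frac{1}{\Delta_n M_n}\int_0^{\Delta_n}\psi''_{h_n}(\widehat X^\epsilon_s)\,s\,ds$ (Lemma~\ref{L: suppressing dB in principal term}); integrating by parts twice against the bridge density to pass from $\psi''_{h_n}$ back to $\psi_{h_n}$ generates time-weights of order $\frac1s+\frac1{\Delta_n-s}$, which are \emph{singular}. This is precisely why the paper must extend the local-time results of~\cite{Gradinaru_et_al_1999} to exponents $\gamma>1/2$ (Lemma~\ref{L: majo func Local time cond Delta}, point~2, and the truncated estimate~\eqref{E: moment LT int cond xy tronque h2}); with bounded kernels the classical results would already suffice. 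Two smaller technical deviations: the drift is removed not by Girsanov but by localisation---since $b\equiv0$ on $[x^*-A,x^*+A]$ by construction~\eqref{eq: def drift}, one has $X^\epsilon=\widehat X^\epsilon$ on a high-probability event and Lemma~\ref{l: AB} transfers the estimate---and several remainder bounds (Lemmas~\ref{l: I3} and~\ref{L: suppressing dB in principal term}) are not uniform in $x_0$ but only reach $h_n/M_n^2$ after integrating against $\pi^\epsilon(x_0)\,dx_0$ via Lemma~\ref{l: bound stationarity}.
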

{\revar Proposition \ref{prop: control Fisher sans Malliavin} is then an immediate consequence of Proposition \ref{prop: bound Malliavin} with \eqref{eq : Score by Malliavin}.
\qed}
\section{Proof of Proposition \ref{prop: bound Malliavin}}
\label{S: study of Malliavin weight}
{\modr After translation of the process $X^\varepsilon$ by a fixed value, it is possible to assume  that $x^*=0$. This will lighten the notations in the proofs appearing in this section and in the Appendix.}
The proof of the bound of the Malliavin weight is divided in several steps. {\modr In the first step below, we show that the main contribution in the integral of  the LHS of 
\eqref{eq : but control Malliavin} comes from a neighborhood of $x^*=0$. In step 2, we compute a more tractable expression of the Malliavin weight \eqref{eq: Malliavin weight} as sum of different terms, while in step 3--5 we study the contribution of each term.}
  \\
\\
\textbf{Step 1: Modifying the process.} \\
A first step is to prove that we can remove the contribution of the drift and that {\revar only the case where $x_0$ is such that $|x_0| \le \Delta_n^{\frac{1}{2} - \gamma}$, for $\gamma > 0$ arbitrarily small matters.}  To do that, we need the following lemma, whose proof can be found in the {\revar Appendix \ref{s: technical}.} 
\begin{lemma}
Assume that {\revar $(A_n)_n$, $(B_n)_n$, $(A'_n)_n$, $(B'_n)_n$ }are some sequences such that, on some set $\Omega_n$, it is 
{\revar $$(A_n, B_n) = (A'_n,B'_n).$$}
Moreover the complementary set of $\Omega_n$ is such that
\begin{equation}
\mathbb{P}(\Omega_n^c) \le {\revarn \kappa_r} n^{-r}
\label{eq: proba omega}
\end{equation}
for any $r > 1$ {\revarn and constants $\kappa_r\ge0$. Assume also that} 
{\modar
	\begin{equation}
		\left \| {\revar A_n} \right \|_{L^p} + \left \| {\revar A'_n } \right \|_{L^p} \le c_p n^{r_0}
		\label{eq: maj grossiere moment}
\end{equation}  }
for some {\revarn $r_0 \ge 0$, $p > 1$, $c_p \ge 0$.} Then, for any ${ \revar 1 \le p' }  < p$ and for any $r> 1$, we have
 $${\revar \left \| |\E[A_n | B_n] - \E[A'_n | B'_n]|\right \|_{L^{p'}} } \le c n^{-r},$$
where the constant c depends on {\revarn $p'$, $r_0$, $p$, $c_p$ and the $(\kappa_r)_r$. } 
\label{l: AB}
\end{lemma}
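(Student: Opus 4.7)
The plan is to reduce the comparison of the two conditional expectations to H\"older-type estimates driven by the superpolynomial smallness of $\mathbb{P}(\Omega_n^c)$. I first apply the triangle inequality
\[
\|\E[A_n | B_n] - \E[A'_n | B'_n]\|_{L^{p'}} \le \|\E[A_n - A'_n | B_n]\|_{L^{p'}} + \|\E[A'_n | B_n] - \E[A'_n | B'_n]\|_{L^{p'}}.
\]
For the first summand, Jensen's inequality gives $\|\E[A_n - A'_n | B_n]\|_{L^{p'}} \le \|A_n - A'_n\|_{L^{p'}}$, and since $A_n - A'_n$ is supported on $\Omega_n^c$, H\"older with exponents $p/p'$ and its conjugate yields
\[
\|A_n - A'_n\|_{L^{p'}} \le (\|A_n\|_{L^p}+\|A'_n\|_{L^p})\,\mathbb{P}(\Omega_n^c)^{1/p'-1/p} \le c\, n^{r_0}\, n^{-r''(1/p'-1/p)},
\]
which is $O(n^{-r})$ once $r''$ is chosen large enough.

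For the second summand I insert $A'_n = A'_n \mathbbm{1}_{\Omega_n} + A'_n \mathbbm{1}_{\Omega_n^c}$ inside each conditional expectation; the pieces involving $A'_n \mathbbm{1}_{\Omega_n^c}$ are handled by the same H\"older argument. Setting $Y := A'_n \mathbbm{1}_{\Omega_n}$, it remains to bound $\|\E[Y | B_n] - \E[Y | B'_n]\|_{L^{p'}}$.

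For this core estimate I use the Bayes-type factorisation, valid because $Y$ vanishes off $\Omega_n$:
\[
\E[Y | B_n] = \mathbb{P}(\Omega_n | B_n)\, h(B_n),\qquad \E[Y | B'_n] = \mathbb{P}(\Omega_n | B'_n)\, h(B'_n),
\]
where $h(b) := \E[A'_n | B_n = b,\,\Omega_n]$ is the regression of $A'_n$ on $B_n$ under the conditional probability $\mathbb{P}(\,\cdot\,|\,\Omega_n)$. Since $(A'_n, B_n) = (A'_n, B'_n)$ under $\mathbb{P}(\,\cdot\,|\,\Omega_n)$, the analogous regression of $A'_n$ on $B'_n$ yields the \emph{same} function $h$. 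Splitting once more by $\mathbbm{1}_{\Omega_n}$ and $\mathbbm{1}_{\Omega_n^c}$, the latter contribution is controlled via H\"older exactly as before. On $\Omega_n$, $B_n = B'_n$ implies $h(B_n) = h(B'_n)$, so
\[
\bigl(\E[Y | B_n] - \E[Y | B'_n]\bigr)\,\mathbbm{1}_{\Omega_n} = h(B_n)\,\bigl(\mathbb{P}(\Omega_n | B_n) - \mathbb{P}(\Omega_n | B'_n)\bigr)\,\mathbbm{1}_{\Omega_n}.
\]
A final H\"older inequality with exponents $p$ and $s$ defined by $1/p + 1/s = 1/p'$, together with $\|h(B_n)\mathbbm{1}_{\Omega_n}\|_{L^p} \le \|A'_n\|_{L^p}$ (from Jensen applied under $\mathbb{P}(\,\cdot\,|\,\Omega_n)$) and the a.s.\ bound $\|\mathbb{P}(\Omega_n^c | B_n)\|_{L^s} \le \mathbb{P}(\Omega_n^c)^{1/s}$ (same for $B'_n$, using $x^s \le x$ on $[0,1]$ for $s \ge 1$), then closes the argument.

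The principal obstacle lies in this final step: the function $h$ has no canonical definition off $\Omega_n$, so multiplying by $\mathbbm{1}_{\Omega_n}$ before applying H\"older is essential in order to keep the $L^p$-norm of the factor $h(B_n)$ under control. Once this splitting is performed, every remaining factor can either be bounded polynomially in $n$ (via the moment hypothesis and Jensen) or by a positive power of $\mathbb{P}(\Omega_n^c)$, and the arbitrariness of $r'' > 1$ delivers the advertised $O(n^{-r})$ rate.
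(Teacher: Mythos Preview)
Your argument is correct and takes a genuinely different route from the paper's proof. The paper proceeds by duality: it writes $\|g_n(B_n)-g'_n(B'_n)\|_{L^{p'}}=\sup_Z \E[(g_n(B_n)-g'_n(B'_n))Z]$ over test variables $Z=h(B_n,B'_n)$ with $\|Z\|_{q'}\le 1$, truncates $Z$ at a level $M=n^C$, and then exploits that on $\Omega_n$ the truncated $Z^{(M)}$ is simultaneously $\sigma(B_n)$- and $\sigma(B'_n)$-measurable, which lets one replace $g_n(B_n)$ by $A_n$ and $g'_n(B'_n)$ by $A'_n$ inside the pairing, up to errors of size $n^{r_0}M\,\mathbb{P}(\Omega_n^c)^{1-1/p}$.

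Your approach avoids both the duality and the truncation: after the initial triangle inequality and Jensen reductions, the Bayes factorisation $\E[A'_n\mathbbm{1}_{\Omega_n}\mid B]=\mathbb{P}(\Omega_n\mid B)\,h(B)$ makes explicit that the two conditional expectations share the \emph{same} regression function $h$ (because under $\mathbb{P}(\cdot\mid\Omega_n)$ the pairs $(A'_n,B_n)$ and $(A'_n,B'_n)$ coincide), so the only residual difference on $\Omega_n$ is $h(B_n)\bigl(\mathbb{P}(\Omega_n^c\mid B'_n)-\mathbb{P}(\Omega_n^c\mid B_n)\bigr)$. The H\"older split with $1/p+1/s=1/p'$ and the elementary inequality $x^s\le x$ on $[0,1]$ then close the estimate. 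This is more direct and arguably more transparent than the paper's argument; the price is the need to keep careful track of where $h$ is well defined, which you handle correctly by always multiplying by $\mathbbm{1}_{\Omega_n}$ before invoking the $L^p$ bound on $h(B_n)$. The paper's duality approach, on the other hand, sidesteps the Bayes identity entirely and would adapt more readily to settings where such a factorisation is less convenient.
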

We observe that, if in the lemma here above we can choose $p' = 2$ and so if $p > 2$, then it follows 
\begin{equation}
{\revar \E[\E[A_n | B_n]^2] \le c \E[\E[A_n' | B_n']^2] }+ cn^{- r}.
\label{eq: AB}
\end{equation}
We want to apply this inequality to our context, we therefore need to define {\revar $A_n := W_{x_0, \Delta_n, \epsilon}$,} where $W_{x_0, \Delta_n, \epsilon}$ has been defined in \eqref{eq: Malliavin weight}, and {\revar $B_n = X_{\Delta_n}^\epsilon$.} Having as a purpose to show that the contribution provided by the case where $|x_0| \ge \Delta_n^{\frac{1}{2} - \gamma}$ is negligible, we introduce the set 
$$\Omega_n := \left \{ \omega : \, X_s^\epsilon(\omega ) \notin [-h_n, h_n] \quad \forall s \in [0, \Delta_n] \right \}.$$
Let $|x_0| \ge \Delta_n^{\frac{1}{2} - \gamma}$.
{\revarn 	We  have assumed $h_n \le \Delta_n$, 
hence,} using Markov inequality, we get
{\revar 
\begin{align*}
    \mathbb{P}_{x_0}(\Omega_n^c) & \le \mathbb{P}_{x_0} 
  (\exists s \in [0, \Delta_n]: \, |X_s^{\epsilon} - x_0|\ge \Delta_n^{\frac{1}{2} - \gamma} - {\revar  \Delta_n}) \\
 & \le \mathbb{P}_{x_0} ( \sup_{s \in [0,\Delta_n] } |X_s^\epsilon-x_0| \ge \frac{\Delta_n^{1/2-\gamma}}{2}), \quad \text{for $n$ large enough}
 \\
 & \le 2^{r'} \frac{\E_{x_0}\left[ \sup_{s \in [0,\Delta_n] } |X_s^\epsilon-x_0|^{r'}\right] }{\Delta_n^{(1/2-\gamma)r'}}, \quad \text{with any $r'  > 1$.}
\end{align*}
To control the expectation appearing  in the last equation, we use Burkholder-Davis-Gundy's inequality, and the fact that the coefficients of the {\modr SDE} \eqref{eq: model epsilon} are bounded in the following way :
$\norm{b}_\infty \le 1$, $\norm{1+\frac{\epsilon\psi(\cdot)}{M_n}}_\infty \le 1+\frac{\norm{\psi}_\infty}{M_n}  \le 3/2$. This implies that
$ \E_{x_0}\left[ \sup_{s \in [0,\Delta_n] } |X_s^\epsilon-x_0|^{r'}\right]  \le c_{r'} \Delta_n^{r'/2}$. In turn, we get $\mathbb{P}_{x_0}(\Omega_n^c) \le c_{r'}\Delta_n^{\gamma r'}$. As the constant $0<\gamma<1/2$ is fixed, and {\revarn the sampling step converges to zero at least with some polynomial rate in $n$,} it is possible to choose appropriately $r'$ in order to satisfy the condition 
\begin{equation*}
	\mathbb{P}_{x_0}(\Omega_n^c) \le c_{r}n^{-r},
\end{equation*}
with any $r > 0$.} 
We remark that the bound we get in the right hand side here above is independent on $x_0$. \\
We let then ${\revar A'_n} = 0$ and ${\revar B'_n = B_n} = X_{\Delta_n}^\epsilon$. We remark that, on $\Omega_n$, by the definition of $X^\epsilon$ we know that its derivative with respect to $\epsilon$ is going to be $0$ and so $ W_{x_0, \Delta_n, \epsilon} = 0$ as well. It follows that, on $\Omega_n$,  ${\revar A'_n = A_n}$ (and {\revar by definition} ${\revar B'_n = B_n}$).
To apply Lemma \ref{l: AB} we are left to check {\revar that the $L^p$ norm of the Malliavin weight $W_{x_0,\Delta_n,\epsilon}$ satisfies 
the condition \eqref{eq: maj grossiere moment}.
We use the following lemma whose proof is given in Appendix \ref{s: technical}.
\begin{lemma}\label{L: maj grossiere Malliavin}
	We have $\sup_{x_0\in\mathbb{R}} \E[|W_{x_0,\Delta_n,\epsilon}|^4] = O(\Delta_n^{-4})$.
\end{lemma}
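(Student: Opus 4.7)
The goal is to obtain a polynomial-in-$\Delta_n^{-1}$ moment bound for the Skorohod integral $W_{x_0,\Delta_n,\epsilon}=\delta(u)$, with
$$u_s=\frac{D_sX^\epsilon_{\Delta_n}\,\dot X^\epsilon_{\Delta_n}}{\langle DX^\epsilon_{\Delta_n},DX^\epsilon_{\Delta_n}\rangle}.$$
I would apply Meyer's inequality for the Skorohod integral: for any $p>4$ there is $C_p>0$ with
$$\norm{W_{x_0,\Delta_n,\epsilon}}_{L^4}\le C_p\Big(\norm{u}_{L^p(\Omega;H)}+\norm{Du}_{L^p(\Omega;H\otimes H)}\Big),$$
so it suffices to bound each term on the right polynomially in $\Delta_n^{-1}$, uniformly in $x_0\in\R$.

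Since $\tilde a(x)=1+\epsilon\psi_{h_n}(x)/M_n\in[1/2,2]$ for $n$ large and the coefficients of \eqref{eq: model epsilon} and \eqref{eq: X dot avec drift} are bounded together with their derivatives, all of $D_sX^\epsilon_t$, $D_rD_sX^\epsilon_t$, $\dot X^\epsilon_{\Delta_n}$, $D_r\dot X^\epsilon_{\Delta_n}$ solve linear SDEs with bounded coefficients and bounded forcing terms (the last two with an additional factor $1/M_n$ that we simply discard). The Burkholder--Davis--Gundy inequality then yields, for every $p\ge 1$,
$$\sup_{0\le s\le t\le\Delta_n}\norm{D_sX^\epsilon_t}_{L^p}+\sup_{0\le r\le s\le t\le\Delta_n}\norm{D_rD_sX^\epsilon_t}_{L^p}+\norm{\dot X^\epsilon_{\Delta_n}}_{L^p}+\sup_{r\le\Delta_n}\norm{D_r\dot X^\epsilon_{\Delta_n}}_{L^p}\le C_p,$$
uniformly in $x_0$. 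For the denominator, the explicit Dol\'eans--Dade representation $D_sX^\epsilon_t=\tilde a(X^\epsilon_s)\,\mathcal{E}^s_t$, combined with uniform ellipticity, gives the classical Malliavin matrix estimate
$$\big\|\langle DX^\epsilon_{\Delta_n},DX^\epsilon_{\Delta_n}\rangle^{-1}\big\|_{L^p}\le C_p\,\Delta_n^{-1}.$$

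Combining these ingredients, $\norm{u}_H=\abs{\dot X^\epsilon_{\Delta_n}}/\sqrt{\langle DX^\epsilon_{\Delta_n},DX^\epsilon_{\Delta_n}\rangle}$ has $L^p$-norm of order $\Delta_n^{-1/2}$, and a routine differentiation of the quotient defining $u_s$, using the identity $D_r\langle DX,DX\rangle=2\int_0^{\Delta_n}D_sX^\epsilon_{\Delta_n}\,D_rD_sX^\epsilon_{\Delta_n}\,ds=O(\Delta_n)$, yields $\norm{Du}_{L^p(\Omega;H\otimes H)}=O(1)$. Meyer's inequality then gives $\norm{W_{x_0,\Delta_n,\epsilon}}_{L^4}=O(\Delta_n^{-1/2})$, which is considerably stronger than the announced $O(\Delta_n^{-1})$ and in particular implies the claim after raising to the fourth power.

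The main obstacle is not any delicate estimate but the bookkeeping of the several terms produced when one differentiates the ratio $u_s$ and applies Meyer's inequality to each of them. Since the lemma is only used to verify the polynomial-growth condition \eqref{eq: maj grossiere moment} of Lemma \ref{l: AB}, no constant or exponent needs to be tracked sharply; in particular the favourable $1/M_n$ and $h_n$ factors coming from $\psi_{h_n}/M_n$ are discarded everywhere.
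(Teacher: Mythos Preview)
Your proposal is correct and follows essentially the same route as the paper: continuity of $\delta$ from $\mathbb{D}^{1,p}(H)$ to $L^p$ (Proposition 1.5.8 in Nualart, which is your Meyer inequality), the pathwise two-sided bound on $D_sX^\epsilon_{\Delta_n}$ coming from the explicit one-dimensional representation of the derivative flow, and standard $L^p$ estimates on first and second Malliavin derivatives of SDE solutions with bounded coefficients. The paper organizes the H\"older step slightly differently, writing $\norm{u}_{\mathbb{D}^{1,4}(H)}\le c\,\norm{D_\cdot X^\epsilon_{\Delta_n}}_{\mathbb{D}^{1,16}(H)}\norm{\dot X^\epsilon_{\Delta_n}}_{\mathbb{D}^{1,16}(\R)}\norm{\langle DX^\epsilon_{\Delta_n},DX^\epsilon_{\Delta_n}\rangle^{-1}}_{\mathbb{D}^{1,8}(\R)}$ and bounding each factor separately, but the ingredients are identical. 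Your sharper conclusion $\E[|W_{x_0,\Delta_n,\epsilon}|^4]=O(\Delta_n^{-2})$ is indeed valid---the paper simply does not track the extra $\sqrt{\Delta_n}$ gained from $\norm{D_\cdot X^\epsilon_{\Delta_n}}_H$ and states only the coarser $O(\Delta_n^{-4})$, which is all that is needed to feed condition \eqref{eq: maj grossiere moment} of Lemma~\ref{l: AB}.
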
	
Using this lemma with
the condition $\Delta_n >  (\frac{1}{n})^\frac{1}{2 \beta + 1}$ we deduce that \eqref{eq: maj grossiere moment} holds true for some $r_0>0$ and $p=4$. }
{\revar Thus,} we can choose $p'=2$ in Lemma \ref{l: AB} and apply \eqref{eq: AB}. It follows that, for $|x_0| \ge \Delta_n^{\frac{1}{2} - \gamma}$, 
\begin{align*}
\E_{x_0}[\E_{x_0}[W_{x_0, \Delta_n, \epsilon} | X_{\Delta_n}^\epsilon]^2] & \le 2 \E_{x_0}[\E_{x_0}[0 | X_{\Delta_n}^\epsilon]^2] + o(n^{- r}) \\
= o(n^{- r})
\end{align*}
for any $r > 0$ and it is independent 
{\modarn of $x_0$. 
} We can therefore focus on the case where $|x_0| \le \Delta_n^{\frac{1}{2} - \gamma}$ {\revar and so the control 
\eqref{eq : but control Malliavin} becomes a consequence of
\begin{equation*}
	|\int_{|x_0|\le \Delta_n^{1/2-\gamma}}\mathbb{E}_{x_0} [\big(\E_{x_0}[W_{x_0, \Delta_n, \epsilon} | X_{\Delta_n}^\epsilon]\big)^2]
\pi^{\epsilon}(x_0)
dx_0 | \le \hat{c} \frac{h_n}{M_n^2}.
\end{equation*}}

We now want to show that the drift function does not provide any contribution and so we can replace the models here above with the same ones, but with a drift coefficient which is now $0$. \\
{\revarn Consider 
	${\revnotat \widehat{X}^\epsilon}$  the same model as 
	$X^\epsilon$, but with $b \equiv 0$.} Set now 
$$\tilde{\Omega}_n := \left \{ \omega, \, X_s^{\epsilon}(\omega) \in [-1, 1], \, \forall s \in [0, \Delta_n] \right \}.$$
Acting as we did in order to compute the probability of $\Omega_n^c$ it is {\revar clearly, for $n$ large enough,}
{\revar 
\begin{align*}
\sup_{|x_0|\le \Delta_n^{1/2-\gamma}} \mathbb{P}_{x_0}(\tilde{\Omega}_n^c) 
&\le
\sup_{|x_0|\le \Delta_n^{1/2-\gamma}}  \mathbb{P}_{x_0} (\sup_{s\in[0,\Delta_n]} |X^\epsilon_s-x_0| \ge 1-{\Delta_n^{1/2-\gamma}})
\\&\le
\sup_{|x_0|\le \Delta_n^{1/2-\gamma}}  \mathbb{P}_{x_0} (\sup_{s\in[0,\Delta_n]} |X^\epsilon_s-x_0| \ge 1/2)
= o(n^{- r})
\end{align*}
for any $r > 0$.} 
From the definition of the drift coefficient in \eqref{eq: def drift} we can see that, on $\tilde{\Omega}_n$, ${\revnotat \widehat{X}^\epsilon} = X^\epsilon$ $\forall \epsilon$ and 
$W_{x_0, \Delta_n, \epsilon} = {\revnotat \widehat{W}}_{x_0, \Delta_n, \epsilon} $, where $ {\revnotat \widehat{W}}_{x_0, \Delta_n, \epsilon} $ is the Malliavin weight associated to the model with $b \equiv 0$. Hence, applying Lemma \ref{l: AB} with ${\revar A_n} = W_{x_0, \Delta_n, \epsilon} $, ${\revar A'_n} =  {\revnotat \widehat{W}}_{x_0, \Delta_n, \epsilon} $ and ${\revar B'_n} = {\revnotat \widehat{X}}_{\Delta_n}^\epsilon$ we get
$$\E_{x_0}[\E_{x_0}[W_{x_0, \Delta_n, \epsilon} | X_{\Delta_n}^\epsilon]^2]  \le c \E_{x_0}[\E_{x_0}[{\revnotat \widehat{W}}_{x_0, \Delta_n, \epsilon}  | {\revnotat \widehat{X}}_{\Delta_n}^\epsilon]^2] + o(n^{- r}).$$
Therefore, for $|x_0| \le \Delta_n^{\frac{1}{2} - \gamma}$, we can do as if $b \equiv 0$, which yields to some simplification in the computation of the Malliavin weight. \\
{\modc It follows our goal becomes to show that 
\begin{equation}
\int_{|x_0| \le \Delta_n^{\frac{1}{2} - \gamma}} \E_{x_0}[(\E_{x_0} [{\revnotat \widehat{W}}_{x_0, \Delta_n, \epsilon}  | {\revnotat \widehat{X}}_{\Delta_n}^\epsilon ])^2] 
{\modarn \pi^\epsilon} (x_0) {\modarn d{x_0}} \le {\revar \hat{c}} \frac{h_n}{M_n^2}.
\label{eq: 16.5}
\end{equation}
{\modchi This will be a consequence of \eqref{eq: 22.5}, \eqref{eq: 23.5} and \eqref{eq: 31.5} that we show below.} \\
 We underline that, even if in the sequel in all our computations we will refer to the model where the drift function $b$ is identically $0$, the invariant density clearly refers to the stationary process, for which the drift is different from $0$. Moving to the case where the drift is equal to $0$, we need to prove some upper bounds  which would have been straightforward with the original drift, thanks to the stationarity of the process. They are gathered in the following lemma, whose proof can be found in the {\revar Appendix \ref{s: technical}.} 
\begin{lemma}
Let ${X}_{\Delta_n}^\epsilon$ and ${\revnotat\widehat{X}}_{\Delta_n}^\epsilon$ be defined as above. Then, the following estimations hold true for $r > 1$ as in Lemma \ref{l: AB}. 
\begin{enumerate}
    \item For any $p \ge 1$ there exists a constant $c> 0$ such that\\
    $
    {\modar \sup_{0<s\le\Delta_n}}
    \int_{\R} \E_{x_0}[|\psi_{h_n}({\revnotat\widehat{X}}_{{\modar s}}^\epsilon)|^p] {\modarn \pi^\epsilon} (x_0) d{x_0} \le c h_n + o(n^{- r})$. 
    \item For any $p \ge 1$ and $k \ge 1$ there exists a constant $c> 0$ such that\\
    $ {\modar \sup_{0<s\le\Delta_n}}\int_{\R} \E_{x_0}[|\psi_{h_n}^{(k)}({\revnotat\widehat{X}}_{{\modar s}}^\epsilon)|^p] {\modarn \pi^\epsilon}(x_0) d{x_0} \le c h_n^{1 - k p} + o(n^{- r})$.
    \item There exists a constant $c > 0$ such that \\
    $\int_0^{\Delta_n} \int_{\R} \E_{x_0}[({\revnotat\widehat{X}}_{u}^\epsilon - {\revnotat\widehat{X}}_{0}^\epsilon)^2 \psi_{h_n}^2({\revnotat\widehat{X}}_{u}^\epsilon)] du \, {\modarn \pi^\epsilon} (x_0) d{x_0} \le c \Delta_n^2 h_n + o(n^{- r})$.
\end{enumerate}
\label{l: bound stationarity}
\end{lemma}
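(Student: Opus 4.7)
The plan is to prove all three parts by the same strategy: bound the inner expectation via a Gaussian-type upper estimate on the transition density $\widehat{p}^\epsilon_s(x_0,y)$ of the drift-less process $\widehat{X}^\epsilon$, and then exchange the order of integration by Fubini so that the integral over $x_0$ produces a uniformly bounded factor. What remains in the outer integral is a weighted $L^p$ norm of $\psi_{h_n}$ or of one of its derivatives, whose compact support in $[-h_n,h_n]$ immediately generates the required power of $h_n$. The process $\widehat{X}^\epsilon$ solves an SDE with $b\equiv 0$ and diffusion coefficient $\widehat{\sigma}^\epsilon(x)=1+(\epsilon/M_n)\psi_{h_n}(x)\in[1/2,3/2]$ whose derivatives up to order three remain uniformly bounded thanks to the calibration $M_n h_n^3 \ge 1/\alpha_0$ together with $h_n \le 1$ and $\beta \ge 3$. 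Hence $\widehat{\sigma}^\epsilon$ satisfies A1 with constants independent of $\epsilon$ and $n$, and a standard Aronson-type upper estimate for uniformly elliptic SDEs yields
$$\widehat{p}^\epsilon_s(x_0,y) \le C_0 s^{-1/2}\exp\bigl(-\lambda_0 (x_0-y)^2/s\bigr), \qquad s\in(0,\Delta_n],$$
with $C_0,\lambda_0>0$ uniform in $\epsilon$ and $n$ (the mean-reverting condition A2 is not required for this upper bound).

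For parts 1 and 2, Fubini gives
$$\int_\R \E_{x_0}\bigl[|\psi_{h_n}^{(k)}(\widehat{X}^\epsilon_s)|^p\bigr]\pi^\epsilon(x_0)\,dx_0 = \int_\R |\psi_{h_n}^{(k)}(y)|^p \Bigl[\int_\R \widehat{p}^\epsilon_s(x_0,y)\pi^\epsilon(x_0)\,dx_0\Bigr] dy.$$
The inner bracket is bounded by $\|\pi^\epsilon\|_\infty C_0 s^{-1/2}\int_\R e^{-\lambda_0 v^2/s}dv$, which is a uniform constant. Since $\psi_{h_n}^{(k)}(y)=h_n^{-k}\psi^{(k)}(y/h_n)$ has support in $[-h_n,h_n]$, a change of variable gives $\int|\psi_{h_n}^{(k)}(y)|^p dy = h_n^{1-kp}\int|\psi^{(k)}(u)|^p du \le c h_n^{1-kp}$, which yields part 1 (with $k=0$) and part 2.

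Part 3 follows the same recipe with an extra factor $(\widehat{X}^\epsilon_u - x_0)^2$. Fubini gives
$$\int_\R \E_{x_0}\bigl[(\widehat{X}^\epsilon_u - x_0)^2\psi_{h_n}^2(\widehat{X}^\epsilon_u)\bigr]\pi^\epsilon(x_0)\,dx_0 = \int_\R \psi_{h_n}^2(y)\Bigl[\int_\R (y-x_0)^2 \widehat{p}^\epsilon_u(x_0,y)\pi^\epsilon(x_0)dx_0\Bigr]dy,$$
and using the Gaussian bound with $\int v^2 e^{-\lambda_0 v^2/u}dv = O(u^{3/2})$, the inner bracket is bounded by $cu$; the whole quantity is therefore at most $cu\int\psi_{h_n}^2(y)dy\le cuh_n$, and integration over $u\in[0,\Delta_n]$ produces the announced $c\Delta_n^2 h_n$. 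The only technical subtlety is that the Aronson constants must be uniform in $n$ despite the $n$-dependence of $\widehat{\sigma}^\epsilon$; this is where the calibration $M_n h_n^3 \ge 1/\alpha_0$ is essential, ensuring that the relevant $L^\infty$ norms of $\widehat{\sigma}^\epsilon$ and its first derivatives stay bounded. The $o(n^{-r})$ remainder in the statement is not required by this transition-density approach and is only a harmless allowance accommodating alternative swap-based proofs via the high-probability event $\tilde{\Omega}_n$ introduced in Step 1.
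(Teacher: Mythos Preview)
Your proof is correct but follows a genuinely different route from the paper. The paper's argument swaps $\widehat{X}^\epsilon$ back to the stationary process $X^\epsilon$ via Lemma~\ref{l: AB} and the high-probability event $\tilde{\Omega}_n$ (this is precisely where the $o(n^{-r})$ remainder enters), then exploits stationarity to write $\int_\R \E_{x_0}[|\psi_{h_n}^{(k)}(X_s^\epsilon)|^p]\pi^\epsilon(x_0)\,dx_0 = \E_{\pi^\epsilon}[|\psi_{h_n}^{(k)}(X_s^\epsilon)|^p] = \int_{-h_n}^{h_n}|\psi_{h_n}^{(k)}(y)|^p\pi^\epsilon(y)\,dy$; for part~3 it additionally invokes reversibility of the diffusion to move $\psi_{h_n}^2$ onto the initial point before conditioning. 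Your approach sidesteps all of this: you stay with the drift-less process, use an Aronson-type Gaussian upper bound on $\widehat{p}^\epsilon_s$ (valid here since $b\equiv0$ makes the flow trivial and the uniform ellipticity plus bounded derivatives of $\widehat\sigma^\epsilon$ are guaranteed by $M_n h_n^3 \ge 1/\alpha_0$), and Fubini together with $\|\pi^\epsilon\|_\infty<\infty$ to reduce everything to $\int|\psi_{h_n}^{(k)}(y)|^p\,dy$. This is more elementary and yields the bounds without the $o(n^{-r})$ term, as you note. The paper's method has the advantage of being self-contained relative to the tools already developed (the swap machinery of Lemma~\ref{l: AB}), whereas yours imports a transition-density estimate that, while standard and indeed used elsewhere in the paper (the Azencott bounds in the proof of Lemma~\ref{l: main term malliavin}), is not formally stated for the drift-less model under A1 alone.
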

}
\noindent
\textbf{Step 2: Formal derivation of the processes.} \\
  We recall that the Malliavin weight {\revnotat $\widehat{W}_{x_0,\Delta_n,\epsilon}$} is explicit and it is as in \eqref{eq: Malliavin weight}, {\revnotat where
  	the process $X^\epsilon$ is replaced by $\widehat{X}^\epsilon$}. 
 We now want to formally derive all the processes that play a role in ${\revnotat \widehat{W}}_{x_0, \Delta_n, \epsilon}$. As 
 {\modar
 	\begin{equation}
 		\label{eq: SDE sans drift}
 		{\revnotat \widehat{X}}_t^\epsilon = x_0 + \int_0^t (1 + \epsilon \frac{1}{M_n} \psi_{h_n}({\revnotat \widehat{X}}_s^\epsilon))d{\modarn B_s,}
 \end{equation} }
we have that ${\revnotat \smash{\dot{\widehat{X}}}}^\epsilon_t := \frac{\partial {\revnotat \widehat{X}}_t^\epsilon}{\partial \epsilon}$, {\revar by formal derivation (see Theorem 5.24 in \cite{Bichteler_et_al87})}, is solution to
\begin{equation}
{\revnotat \smash{\dot{\widehat{X}}}}^\epsilon_t = \int_0^t \frac{\epsilon}{M_n} \psi_{h_n}'({\revnotat \widehat{X}}_s^\epsilon) {\revnotat \smash{\dot{\widehat{X}}}}^\epsilon_s dB_s + \int_0^t \frac{1}{M_n} \psi_{h_n}({\revnotat \widehat{X}}_s^\epsilon) dB_s. 
\label{eq: def x dot}
\end{equation}
Let {\revar $Y_t^\epsilon := \frac{\partial {\revnotat \widehat{X}}_t^\epsilon}{\partial x_0}$ be the flow of {\modr SDE} \eqref{eq: SDE sans drift}, which is the} solution of 
$$Y_t^\epsilon = 1 + \int_0^t \frac{\epsilon}{M_n} \psi_{h_n}'({\revnotat \widehat{X}}_s^\epsilon) Y^\epsilon_s dB_s.$$
Using variation of the constant method, we have 
\begin{equation}
{\revnotat \smash{\dot{\widehat{X}}}}^\epsilon_t = Y_t^\epsilon \int_0^t \psi_{h_n}({\revnotat \widehat{X}}_s^\epsilon)(Y_s^\epsilon)^{-1}\frac{1}{M_n} dB_s - \frac{{\revar \varepsilon}Y_t^\epsilon}{M_n^2} \int_0^t \psi_{h_n}'({\revnotat \widehat{X}}_s^\epsilon) \psi_{h_n}({\revnotat \widehat{X}}_s^\epsilon)(Y_s^\epsilon)^{-1} ds
\label{eq: model X dot}
\end{equation}
and $Y^\epsilon$ is explicit as Dol\'eans-Dade exponential:
\begin{equation}
Y_t^\epsilon = \exp(\frac{\epsilon}{M_n}\int_0^t \psi_{h_n}'(X_s^\epsilon)dB_s - \frac{\epsilon^2}{2 M_n^2} \int_0^t (\psi_{h_n}'(X_s^\epsilon))^2 ds  ).
\label{eq: DD exp}
\end{equation}
{\modrev By standard computation (see Equation (2.59) in \cite{Nualart})} it is 
\begin{equation}
D_s {\revnotat \widehat{X}}_t^\epsilon = (Y_s^\epsilon)^{-1} (Y_t^\epsilon)(1 + \psi_{h_n}({\revnotat \widehat{X}}_s^\epsilon)\frac{\epsilon}{M_n}).
\label{eq: DsXt}
\end{equation}
{\modrev Recall the Malliavin weight is ${\revnotat \widehat{W}_{x_0, \Delta_n, \epsilon}} = {\revnotat \delta \big(\frac{D_\cdot \widehat{X}_{\Delta_n}^\epsilon \smash{\dot{\widehat{X}}}^\epsilon_{\Delta_n}}{\bracket{D_\cdot \widehat{X}_{\Delta_n}^\epsilon}{D_\cdot \widehat{X}_{\Delta_n}^\epsilon}	}\big)}$, as proven in Appendix \ref{S: Appendix Malliavin}. Then, we can compute it explicitly. It holds} {\revar 
\begin{align}{\label{eq: split W}}
	{\revnotat \widehat{W}_{x_0, \Delta_n, \epsilon}}
 & = {\revnotat\smash{\dot{\widehat{X}}}}^\epsilon_{\Delta_n} \delta \big(\frac{D_\cdot {\revnotat \widehat{X}}_{\Delta_n}^\epsilon}{\bracket{D_\cdot {\revnotat \widehat{X}}_{\Delta_n}^\epsilon}{D_\cdot {\revnotat \widehat{X}}_{\Delta_n}^\epsilon }}\big) - \frac{\bracket{ D_\cdot {\revnotat \widehat{X}}_{\Delta_n}^\epsilon}{D_\cdot {\revnotat \smash{\dot{\widehat{X}}}}_{\Delta_n}^\epsilon}}{\bracket{D_\cdot {\revnotat \widehat{X}}_{\Delta_n}^\epsilon}{D_\cdot {\revnotat \widehat{X}}_{\Delta_n}^\epsilon }} 
\\&= {\revnotat \widehat{W}}^1_{x_0, \Delta_n, \epsilon} + {\revnotat \widehat{W}}^2_{x_0, \Delta_n, \epsilon}
\label{eq: 22.5}
\end{align}
where we used Proposition 1.3.3 in Nualart \cite{Nualart}. This is possible as the process $u\mapsto\frac{D_u {\revnotat \widehat{X}}^\epsilon_{\Delta_n}}{
\bracket{D_\cdot {\revnotat \widehat{X}}_{\Delta_n}^\epsilon}{D_\cdot {\revnotat \widehat{X}}_{\Delta_n}^\epsilon}}$ can be shown, by the same computations as those in the proof of Lemma \ref{L: maj grossiere Malliavin}, to be an element of the space $\mathbb{D}^{1,2}(H)$, which is in the domain of $\delta$ (see Appendix \ref{S: Appendix Malliavin}).} 

\noindent
\textbf{Step 3: Handling ${\revnotat \widehat{W}}^1_{x_0, \Delta_n, \epsilon}$.} \\
{\modc We now study the two terms {\revar separately}. Regarding the first one,
	{\modarn the following proposition holds true. The proof of this proposition, given in \revar Appendix \ref{s: technical},} 
	is based on the fact that we can approximate $Y^{\epsilon}_t$, $(Y^{\epsilon}_s)^{-1}$ and $D_u {\revnotat \widehat{X}}^\epsilon_{\Delta_n}$ by $1$, comitting an 
error which is negligible. 
}
\begin{proposition}
Let $({\revnotat \widehat{X}}_{i \Delta_n}^\epsilon)_{i = 0, \dots , n-1}$ be the discrete observations of the process solution to {\revarn \eqref{eq: SDE sans drift}} and ${\revnotat \widehat{W}}^1_{x_0, \Delta_n, \epsilon}$ the Malliavin weight defined as above. Then, 
$${\revnotat \widehat{W}}^1_{x_0, \Delta_n, \epsilon} = \frac{1}{\Delta_n}\int_0^{\Delta_n} \frac{1}{M_n} \psi_{h_n}({\revnotat \widehat{X}}_s^\epsilon) dB_s B_{\Delta_n} + {\modar R_n^{(1)}},$$
where ${\modar R_n^{(1)}}$ is a remainder term and it is such that 
{\modar 
	\begin{equation} \label{eq: control Rn integrated}
\int_{|x_0|\le \Delta_n^{1/2-\gamma}}
\E_{x_0}\left[|R_n^{(1)}|^2\right] {\modarn \pi^\epsilon}(x_0) dx_0\le \frac{c}{M_n^4 h_n^4 }=o(\frac{h_n}{M_n^2}).
\end{equation}	
}	
\label{p: bound W1}
\end{proposition}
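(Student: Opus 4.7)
The plan is to expand the Malliavin weight $\widehat{W}^1_{x_0,\Delta_n,\epsilon}$ around the trivial values $Y^\epsilon_t\equiv 1$ and $\tfrac{\epsilon}{M_n}\psi_{h_n}\equiv 0$, isolate the advertised leading term, and then control every resulting remainder in $L^2\bigl(d\mathbb{P}_{x_0}\otimes \pi^\epsilon(x_0)\,dx_0\bigr)$. Concretely, I would start from
\[
\widehat{W}^1_{x_0,\Delta_n,\epsilon}=\smash{\dot{\widehat{X}}}^\epsilon_{\Delta_n}\,\delta\!\Bigl(\frac{D_\cdot \widehat{X}^\epsilon_{\Delta_n}}{\bracket{D_\cdot \widehat{X}^\epsilon_{\Delta_n}}{D_\cdot \widehat{X}^\epsilon_{\Delta_n}}}\Bigr),
\]
plug in the factorizations \eqref{eq: model X dot} and \eqref{eq: DsXt}, and observe that one copy of $Y^\epsilon_{\Delta_n}$ in the denominator
\[
\bracket{D_\cdot \widehat{X}^\epsilon_{\Delta_n}}{D_\cdot \widehat{X}^\epsilon_{\Delta_n}}=(Y^\epsilon_{\Delta_n})^2\int_0^{\Delta_n}(Y^\epsilon_u)^{-2}\bigl(1+\tfrac{\epsilon}{M_n}\psi_{h_n}(\widehat{X}^\epsilon_u)\bigr)^2\,du
\]
cancels the $Y^\epsilon_{\Delta_n}$ already present in $D_s\widehat{X}^\epsilon_{\Delta_n}$.

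Writing each $Y$-factor as $1$ plus a fluctuation and treating the $\tfrac{\epsilon}{M_n}\psi_{h_n}$ perturbation likewise, I would then expand both factors of the product. Keeping the $1$'s everywhere reduces the quotient inside the Skorohod integral to $1/\Delta_n$, hence $\delta(1/\Delta_n)=B_{\Delta_n}/\Delta_n$; the corresponding reduction of $\smash{\dot{\widehat{X}}}^\epsilon_{\Delta_n}$ is $\tfrac{1}{M_n}\int_0^{\Delta_n}\psi_{h_n}(\widehat{X}^\epsilon_s)\,dB_s$, and the product of the two main pieces matches exactly the claimed leading term. Everything else is collected into $R_n^{(1)}$ as a finite sum of cross terms, each containing at least one factor of the form $Y^\epsilon-1$, $(Y^\epsilon)^{-1}-1$, $\tfrac{\epsilon}{M_n}\psi_{h_n}(\widehat{X}^\epsilon)$, or the residual Lebesgue piece $\tfrac{\epsilon}{M_n^2}\int_0^{\Delta_n}\psi'_{h_n}(\widehat{X}^\epsilon_s)\psi_{h_n}(\widehat{X}^\epsilon_s)(Y^\epsilon_s)^{-1}\,ds$.

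The core work is then the $L^2$ estimate of $R_n^{(1)}$. I would bound each cross term by Cauchy-Schwarz: $Y^\epsilon$-type fluctuations contribute a factor $O(1/M_n)$ uniformly in $t\in[0,\Delta_n]$ and $x_0$ via Burkholder-Davis-Gundy applied to the Dol\'eans-Dade form \eqref{eq: DD exp}, while the occurrences of $\psi_{h_n}(\widehat{X}^\epsilon_s)$, $\psi'_{h_n}(\widehat{X}^\epsilon_s)$ and $(\widehat{X}^\epsilon_s-x_0)\psi_{h_n}(\widehat{X}^\epsilon_s)$ are controlled, after integration against $\pi^\epsilon(x_0)\,dx_0$, by items (1), (2) and (3) of Lemma \ref{l: bound stationarity} respectively. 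For the Skorohod integrals generated by the expansion of the quotient, I would rely on the classical inequality $\E[\delta(u)^2]\le \E[\|u\|^2_H]+\E[\|Du\|^2_{H\otimes H}]$, which reduces the estimate to controllable first and second Malliavin derivatives of the explicit building blocks.

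The main obstacle is the careful book-keeping: several remainder terms have to be matched individually so that every factor $1/M_n$ coming from a $Y$-fluctuation combines with an $h_n$ coming from the localization of $\psi_{h_n}$, in such a way that the final bound is at most $c/(M_n^4 h_n^4)$. Under the scaling $1/M_n=O(h_n^\beta)$ with $\beta\ge 3$, this quantity equals $O(h_n^{4\beta-4})$, which is indeed $o(h_n/M_n^2)=O(h_n^{1+2\beta})$ since $4\beta-4>1+2\beta$ whenever $\beta>5/2$, and the hypothesis $\beta\ge 3$ precisely guarantees this.
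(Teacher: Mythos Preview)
Your strategy is viable in principle, but the paper takes a structurally different and more economical route. Rather than expanding the integrand inside the Skorohod integral and then estimating $\delta$ of each fluctuation via $\E[\delta(u)^2]\le \E[\|u\|_H^2]+\E[\|Du\|_{H\otimes H}^2]$, the paper computes the whole Skorohod integral in one shot using the Ornstein--Uhlenbeck operator $L=-\delta\circ D$: by Proposition 1.3.3 in \cite{Nualart} one has
\[
\Phi_{\Delta_n}:=\sqrt{\Delta_n}\,\delta\Bigl(\frac{D_\cdot \widehat{X}^\epsilon_{\Delta_n}}{\|D_\cdot \widehat{X}^\epsilon_{\Delta_n}\|_H^2}\Bigr)
=\frac{-\sqrt{\Delta_n}\,L(\widehat{X}^\epsilon_{\Delta_n})}{\|D_\cdot \widehat{X}^\epsilon_{\Delta_n}\|_H^2}
+\frac{\sqrt{\Delta_n}\,\bracket{D_\cdot(\|D_\cdot \widehat{X}^\epsilon_{\Delta_n}\|_H^2)}{D_\cdot \widehat{X}^\epsilon_{\Delta_n}}}{\|D_\cdot \widehat{X}^\epsilon_{\Delta_n}\|_H^4}.
\]
The point is that $t\mapsto L(\widehat{X}^\epsilon_t)$ solves a linear SDE obtained by applying $L$ to \eqref{eq: SDE sans drift}, from which one reads off $L(\widehat{X}^\epsilon_{\Delta_n})\approx -B_{\Delta_n}$ with an explicit $L^p$ error of size $(M_n h_n^2)^{-1}$. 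This yields the clean two-term split
\[
R_n^{(1)}=\Bigl(\tfrac{\smash{\dot{\widehat{X}}}^\epsilon_{\Delta_n}}{\sqrt{\Delta_n}}-\tfrac{1}{\sqrt{\Delta_n}M_n}\!\int_0^{\Delta_n}\!\psi_{h_n}(\widehat{X}^\epsilon_s)\,dB_s\Bigr)\tfrac{B_{\Delta_n}}{\sqrt{\Delta_n}}
+\tfrac{\smash{\dot{\widehat{X}}}^\epsilon_{\Delta_n}}{\sqrt{\Delta_n}}\Bigl(\Phi_{\Delta_n}-\tfrac{B_{\Delta_n}}{\sqrt{\Delta_n}}\Bigr),
\]
and both pieces are handled by Cauchy--Schwarz together with the moment bound $\E_{x_0}[|\smash{\dot{\widehat{X}}}^\epsilon_s|^p]\le c\Delta_n^{p/2}/M_n^p$ from Lemma \ref{l: bound moments Xdot}. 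The resulting bound is uniform in $x_0$, so no appeal to Lemma \ref{l: bound stationarity} is needed here. Your expansion would instead produce a long list of cross terms, each requiring its own Malliavin-norm estimate (in particular, differentiating through the random denominator $\int_0^{\Delta_n}(Y^\epsilon_u)^{-2}(1+\tfrac{\epsilon}{M_n}\psi_{h_n})^2\,du$), which is exactly the book-keeping you flag as the main obstacle. Two small points on your sketch: the sharp control on $Y^\epsilon-1$ used in the paper is the pathwise bound $|Y^\epsilon_t-1|\le c\Delta_n/(M_n h_n^2)$ obtained via It\^o's formula on the exponent \eqref{eq: ito}, not a BDG estimate; and the $\smash{\dot{\widehat{X}}}^\epsilon$ contribution is controlled through Lemma \ref{l: bound moments Xdot} rather than Lemma \ref{l: bound stationarity}.
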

{\modchi From Jensen inequality \eqref{eq: control Rn integrated} implies that the contribution of the remainder term is negligible in \eqref{eq: 16.5}.}
We now prove the result \eqref{eq: 16.5} for the principal term of ${\revnotat\widehat{W}}^1_{x_0, \Delta_n, \epsilon}$.
Let us introduce the following notation:
$${\revar \mathcal{M}_{{u}}} := \int_0^{{\modarn u}} \frac{1}{M_n} \psi_{h_n}({\revnotat \widehat{X}}_s^\epsilon) dB_s, 
\qquad {\revar \mathcal{N}_{{u}}} := \frac{1}{\Delta_n} B_{{\modarn u}} {\modarn , \quad \text{ for $u \in [0,\Delta_n]$.} }$$
Then, we actually need to study in detail 
$$\int_{|x_0| \le \Delta_n^{\frac{1}{2} - \gamma}} \E_{x_0}[{\revar \mathcal{M}_{\Delta_n}^2} {\revar \mathcal{N}_{\Delta_n}^2}] {\modarn \pi^\epsilon}(x_0) dx_0.$$
Using {\revar It\^o's formula} it is 
\begin{align*}
\E_{x_0}\left[{\revar \mathcal{M}_{\Delta_n}^2} {\revar \mathcal{N}_{\Delta_n}^2}\right]& = \E_{x_0}\left[\int_0^{\Delta_n} {\revar \mathcal{M}_s^2} {\modarn d\bracketIto{{\revar \mathcal{N}}}{{\revar \mathcal{N}}}_s }+ \int_0^{\Delta_n} {\revar \mathcal{N}_s^2} {\modarn d\bracketIto{{\revar \mathcal{M}}}{{\revar \mathcal{M}}}_s} + \int_0^{\Delta_n} {\revar \mathcal{M}_s} {\revar \mathcal{N}_s} {\modarn d\bracketIto{{\revar \mathcal{M}}}{{\revar \mathcal{N}}}_s}\right] \\
& \le 2 \E_{x_0}\left[\int_0^{\Delta_n} {\revar \mathcal{M}_s^2} {\modarn d\bracketIto{{\revar \mathcal{N}}}{{\revar \mathcal{N}}}_s }+ \int_0^{\Delta_n} {\revar \mathcal{N}_s^2} {\modarn d\bracketIto{{\revar \mathcal{M}}}{{\revar \mathcal{M}}}_s }\right] {\revar ,}
\end{align*}
{\revar where we used Kunita-Watanabe inequality (see Corollary 1.16 of Chapter IV in \cite{RevYor_3rd_edition}).}
According to the decomposition here above we have  {\revar 
\begin{multline*}
\int_{|x_0| \le \Delta_n^{\frac{1}{2} - \gamma}} \E_{x_0}[{\revar \mathcal{M}_{\Delta_n}^2} {\revar \mathcal{N}_{\Delta_n}^2}] {\modarn \pi^\epsilon}(x_0) dx_0 \le 2\int_{|x_0| \le \Delta_n^{\frac{1}{2} - \gamma}}
\E_{x_0}[\int_0^{\Delta_n} {\revar \mathcal{M}_s^2} {\modarn d\bracketIto{{\revar \mathcal{N}}}{{\revar \mathcal{N}}}_s }]dx_0\\
+ 2\int_{|x_0| \le \Delta_n^{\frac{1}{2} - \gamma}} \E_{x_0}[  \int_0^{\Delta_n} {\revar \mathcal{N}_s^2} {\modarn d\bracketIto{{\revar \mathcal{M}}}{{\revar \mathcal{M}}}_s }] dx_0 
=:2 A_1 +2 A_2.
\end{multline*}
}

Regarding $A_1$, the first point of Lemma \ref{l: bound stationarity} ensures that
\begin{align*}
A_1 & \le \frac{c}{M_n^2{\modarn\Delta_n^2}} \int_{|x_0| \le \Delta_n^{\frac{1}{2} - \gamma}} \int_0^{\Delta_n} \int_0^s \E_{x_0}[\psi_{h_n}({\revnotat \widehat{X}}_u^{\epsilon})^2] du ds {\modarn \pi^\epsilon}(x_0) dx_0  \\
& \le {\modarn \frac{c}{M_n^2} h_n.}
\end{align*}
We now turn studying 
$$A_2= \frac{2}{M_n^2{\modarn\Delta_n^2}}\int_{|x_0| \le \Delta_n^{\frac{1}{2} - \gamma}} \int_0^{\Delta_n} \E_{x_0}[B_u^2 \psi_{h_n}({\revnotat \widehat{X}}_u^{\epsilon})^2] du {\modarn \pi^\epsilon}(x_0) dx_0.$$
To bound this term we want to replace $B_u^2$ and, in order to do that, we use the dynamics of our process ${\revnotat \widehat{X}}$. Indeed, it is 
\begin{align*}
dB_u & = a_\epsilon({\revnotat \widehat{X}}_u^\epsilon)^{-1} d{\revnotat \widehat{X}}_u^{\epsilon} \\
& = d{\revnotat \widehat{X}}_u^{\epsilon} - \frac{\frac{\epsilon}{M_n}\psi_{h_n}({\revnotat \widehat{X}}_u^\epsilon)}{1 +\frac{\epsilon}{M_n}\psi_{h_n}({\revnotat \widehat{X}}_u^\epsilon)} d{\revnotat \widehat{X}}_u^{\epsilon}.
\end{align*}
It follows 
$$B_u = ({\revnotat \widehat{X}}_u^{\epsilon} - {\revnotat \widehat{X}}_0^{\epsilon}) - \frac{\epsilon}{M_n} \int_0^u \frac{\psi_{h_n}({\revnotat \widehat{X}}_v^\epsilon)}{1 +\frac{\epsilon}{M_n}\psi_{h_n}({\revnotat \widehat{X}}_v^\epsilon)} d{\revnotat \widehat{X}}_v^{\epsilon}.$$
Hence, if we replace it in $A_2$, we obtain 
\begin{align*}
A_2 & \le \frac{c}{M_n^2{\modarn\Delta_n^2}} \int_{|x_0| \le \Delta_n^{\frac{1}{2} - \gamma}} \int_0^{\Delta_n} \E_{x_0}[({\revnotat \widehat{X}}_u^{\epsilon} - {\revnotat \widehat{X}}_0^{\epsilon})^2 \psi_{h_n}({\revnotat \widehat{X}}_u^{\epsilon})^2] du {\modarn \pi^\epsilon}(x_0) dx_0 \\
& + \frac{c}{M_n^4{\modarn\Delta_n^2}} \int_{|x_0| \le \Delta_n^{\frac{1}{2} - \gamma}} \int_0^{\Delta_n} \E_{x_0}[(\int_0^u \frac{\psi_{h_n}({\revnotat \widehat{X}}_v^\epsilon)}{1 +\frac{\epsilon}{M_n}\psi_{h_n}({\revnotat \widehat{X}}_v^\epsilon)} d{\revnotat \widehat{X}}_v^{\epsilon})^2  \psi_{h_n}({\revnotat \widehat{X}}_u^{\epsilon})^2] du {\modarn \pi^\epsilon}(x_0) dx_0
&= : A_{2, 1} + A_{2, 2}.
\end{align*}
The bound on $A_{2, 1}$ is a direct consequence of the third point of Lemma \ref{l: bound stationarity}
{\modarn which yields to $A_{2,1} \le c \frac{h_n}{M_n^2}$.}
Regarding $A_{2,2}$, from Cauchy-Schwarz, Burkholder-Davis-Gundy inequalities and the first point of Lemma \ref{l: bound stationarity} we have 
\begin{align*}
A_{2,2} & \le {\modarn c\frac{ h_n^{\frac{1}{2}}}{M_n^4}.}
\end{align*}
It is negligible compared to {\modarn  $c\frac{h_n}{M_n^2}$, since $\frac{1}{h_n^{1/2}M_n^2}=O(h_n^{2\beta-1/2}) \xrightarrow{n\to\infty}0$.} 
{\modchi Together with \eqref{eq: control Rn integrated}, it concludes the bound on the contribution of ${\revnotat\widehat{W}}^1_{x_0, \Delta_n, \epsilon}$ in \eqref{eq: 16.5}:
\begin{equation}
\int_{|x_0| \le \Delta_n^{\frac{1}{2} - \gamma}} \E_{x_0}[(\E_{x_0} [ {\revnotat \widehat{W}}^1_{x_0, \Delta_n, \epsilon}  | {\revnotat \widehat{X}}_{\Delta_n}^\epsilon ])^2] 
{\modarn \pi^\epsilon (x_0) d{x_0}} \le c \frac{h_n}{M_n^2}.
\label{eq: 23.5}
\end{equation}}}\\
\\
\textbf{Step 4: Handling ${\revnotat \widehat{W}}^2_{x_0, \Delta_n, \epsilon}$, derivation leading term.}\\
It is more complicated to find a bound on ${\revnotat \widehat{W}}^2_{x_0, \Delta_n, \epsilon}$. We need first of all to derive the explicit expression for $D_\cdot {\revnotat \smash{\dot{\widehat{X}}}}_{\Delta_n}^\epsilon$. 
{\revarn We omit the details of the computations, which follow the following route. First, we apply Theorem 2.2.1 of Nualart \cite{Nualart}, to
	obtain that $t\mapsto D_u {\revnotat \smash{\dot{\widehat{X}}}}_{t}^\epsilon$ is solution of a linear {\modr SDE} obtained by formal differentiation of the dynamics \eqref{eq: model X dot}. Second, we solve this linear {\modr SDE} by the usual method of variation of parameters. It yields to the following explicit representation for $D_u {\revnotat \smash{\dot{\widehat{X}}}}_{\Delta_n}^\epsilon$.}   
\begin{equation}
D_u {\revnotat \smash{\dot{\widehat{X}}}}_{\Delta_n}^\epsilon = (D_u {\revnotat \smash{\dot{\widehat{X}}}}_{\Delta_n}^\epsilon)_1 + (D_u {\revnotat \smash{\dot{\widehat{X}}}}_{\Delta_n}^\epsilon)_2 + (D_u {\revnotat \smash{\dot{\widehat{X}}}}_{\Delta_n}^\epsilon)_3, 
\label{eq: split DuX dot}
\end{equation}
with 
$$(D_u {\revnotat \smash{\dot{\widehat{X}}}}_{t}^\epsilon)_1 = [\frac{\epsilon}{M_n} \psi_{h_n}'({\revnotat \widehat{X}}_u^\epsilon){\revnotat \smash{\dot{\widehat{X}}}}_u^\epsilon + \psi_{h_n}({\revnotat \widehat{X}}_u^\epsilon) \frac{1}{M_n}  ] (Y_u^\epsilon)^{-1} Y_t^\epsilon,$$
\begin{align*}
(D_u {\revnotat \smash{\dot{\widehat{X}}}}_{t}^\epsilon)_2 &= Y_t^\epsilon \int_u^t (Y_s^\epsilon)^{-1} [\frac{\epsilon}{M_n} \psi_{h_n}''({\revnotat \widehat{X}}_s^\epsilon)(D_u {\revnotat \widehat{X}}_s^\epsilon) {\revnotat \smash{\dot{\widehat{X}}}}_s^\epsilon ] dB_s \\
& - Y_t^\epsilon \int_u^t (Y_s^\epsilon)^{-1}(\frac{\epsilon}{M_n})^2 \psi_{h_n}'({\revnotat \widehat{X}}_s^\epsilon)\psi_{h_n}''({\revnotat \widehat{X}}_s^\epsilon)(D_u {\revnotat \widehat{X}}_s^\epsilon) {\revnotat \smash{\dot{\widehat{X}}}}_s^\epsilon ds \\
& - Y_t^\epsilon \int_u^t (Y_s^\epsilon)^{-1}\frac{\epsilon}{M_n^2} (\psi_{h_n}'({\revnotat \widehat{X}}_s^\epsilon))^2(D_u {\revnotat \widehat{X}}_s^\epsilon)  ds,
\end{align*}
$$(D_u {\revnotat \smash{\dot{\widehat{X}}}}_{t}^\epsilon)_3 = Y_t^\epsilon \int_u^t (Y_s^\epsilon)^{-1}\frac{1}{M_n} \psi_{h_n}'({\revnotat \widehat{X}}_s^\epsilon) (D_u {\revnotat \widehat{X}}_s^\epsilon)  dB_s. $$

It follows that
$${\revnotat \widehat{W}}^2_{x_0, \Delta_n, \epsilon}= \frac{\bracket{D_\cdot {\revnotat \widehat{X}}_{\Delta_n}^\epsilon}{D_\cdot {\revnotat \smash{\dot{\widehat{X}}}}_{\Delta_n}^\epsilon}}{\bracket{D_\cdot {\revnotat \widehat{X}}_{\Delta_n}^\epsilon}{D_\cdot {\revnotat \widehat{X}}_{\Delta_n}^\epsilon }},$$ 
consists in three terms.
We therefore need to bound
\begin{align}
&\int_{|x_0| \le \Delta_n^{\frac{1}{2} - \gamma}} \E_{x_0}[\E_{x_0}[{\revnotat \widehat{W}}^2_{x_0, \Delta_n, \epsilon} | {\revnotat \widehat{X}}_\Delta^\epsilon]^2] {\modarn \pi^\epsilon}(x_0) dx_0 
\nonumber
\\
&= \int_{|x_0| \le \Delta_n^{\frac{1}{2} - \gamma}} \E_{x_0}[\E_{x_0}[\frac{\bracket{D_\cdot {\revnotat \widehat{X}}_{\Delta_n}^\epsilon}{ (D_\cdot {\revnotat \smash{\dot{\widehat{X}}}}_{\Delta_n}^\epsilon)_1 + (D_\cdot {\revnotat \smash{\dot{\widehat{X}}}}_{\Delta_n}^\epsilon)_2 + (D_\cdot {\revnotat \smash{\dot{\widehat{X}}}}_{\Delta_n}^\epsilon)_3 }}{\bracket{D_\cdot {\revnotat \widehat{X}}_{\Delta_n}^\epsilon}{D_\cdot {\revnotat \widehat{X}}_{\Delta_n}^\epsilon }} | {\revnotat \widehat{X}}_\Delta^\epsilon]^2] {\modarn \pi^\epsilon}(x_0) dx_0 
\nonumber
\\ \label{eq: def I1-I3}
& =: I_1 + I_2 + I_3.
\end{align}
From Equation \eqref{eq: split DuX dot} and below it is easy to see that there are some terms negligible in $I_1$ and $I_2$, due to presence of $\dot{X}^\epsilon_s$. Indeed, the following lemma on the process $\dot{X}^\epsilon$ holds true. Its proof can be found in the {\revar Appendix \ref{s: technical}.}
{\modar 
\begin{lemma}
	Let ${\revnotat \smash{\dot{\widehat{X}}}}^\epsilon$ be the process solution to \eqref{eq: model X dot}. Then, for any $p \ge 2$, there exists a constant $c > 0$ such that
	\begin{align} \label{eq: control Xdot}
	{\modarn \sup_{x_0 \in \mathbb{R}}}	\sup_{0<s\le \Delta_n}\E_{\modarn x_0}[|{\revnotat \smash{\dot{\widehat{X}}}}^\epsilon_{s}|^p] &\le \frac{ \Delta_n^{p/2}}{M_n^p}
		 \\ \label{eq: control Xdot integrated}
		\int_{|x_0|\le \Delta_n^{1/2-\gamma}}\E_{x_0}[(\frac{{\revnotat \smash{\dot{\widehat{X}}}}^\epsilon_{\Delta_n}}{\sqrt{\Delta_n}})^p] {\modarn \pi^\epsilon}(x_0)dx_0&\le c \frac{h_n}{M_n^p}
	\end{align}
	\label{l: bound moments Xdot}
\end{lemma}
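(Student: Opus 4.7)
\textbf{Proof plan for Lemma \ref{l: bound moments Xdot}.} The starting point is the explicit representation \eqref{eq: model X dot} for $\dot{\widehat{X}}^\epsilon_s$ as the sum of a stochastic integral $Y^\epsilon_s\int_0^s\psi_{h_n}(\widehat{X}^\epsilon_u)(Y^\epsilon_u)^{-1}\frac{1}{M_n}dB_u$ and a Lebesgue integral carrying a prefactor $\frac{\epsilon}{M_n^2}$. Before any moment estimate, I would establish that $Y^\epsilon$ and $(Y^\epsilon)^{-1}$ have uniformly bounded moments of any order: since $Y^\epsilon$ is the Doléans--Dade exponential in \eqref{eq: DD exp}, the exponent's martingale part has quadratic variation bounded by $\frac{\|\psi'\|_\infty^2\Delta_n}{M_n^2h_n^2}$, which tends to $0$ because $M_nh_n\to\infty$ and $\Delta_n\to0$. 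Combined with the usual exponential-martingale/Novikov estimate, this yields $\sup_{s\le\Delta_n}\E[(Y_s^\epsilon)^{\pm p}]\le c_p$ uniformly in $n$, $\epsilon$, $x_0$.

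For \eqref{eq: control Xdot}, apply BDG to the stochastic integral and use $\|\psi\|_\infty\le 1$ together with Cauchy--Schwarz against the bounded negative moments of $Y^\epsilon$: this gives $\E_{x_0}[|Y^\epsilon_s\int_0^s\psi_{h_n}(\widehat{X}^\epsilon_u)(Y^\epsilon_u)^{-1}\frac{1}{M_n}dB_u|^p]\le c_p s^{p/2}/M_n^p$. For the Lebesgue-integral piece, Jensen together with $|\psi'_{h_n}\psi_{h_n}|\le c/h_n$ yields a bound of order $s^p/(M_n^{2p}h_n^p)$, which is smaller than $s^{p/2}/M_n^p$ because $\sqrt{s}/(M_nh_n)\to 0$. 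Summing, one obtains \eqref{eq: control Xdot} uniformly in $x_0$.

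For \eqref{eq: control Xdot integrated}, the strategy is identical but we integrate against $\pi^\epsilon(x_0)dx_0$ on $\{|x_0|\le\Delta_n^{1/2-\gamma}\}$ and exploit that $\psi_{h_n}$ is supported on an interval of length $2h_n$. After Fubini and BDG one is reduced to controlling $\int_0^{\Delta_n}\int\E_{x_0}[|\psi_{h_n}(\widehat{X}^\epsilon_u)|^p(Y^\epsilon_u)^{-p}]\pi^\epsilon(x_0)dx_0\,du$; Cauchy--Schwarz in the inner expectation combined with point 1 of Lemma \ref{l: bound stationarity} produces the crucial extra factor $h_n$, so that the stochastic piece contributes $c\Delta_n^{p/2}h_n/M_n^p$. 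For the Lebesgue piece, one uses instead point 2 of Lemma \ref{l: bound stationarity} to absorb $|\psi'_{h_n}|^p$; the extra $1/M_n^p$ factor ensures this contribution is strictly dominated by the stochastic one. Dividing through by $\Delta_n^{p/2}$ yields the announced bound $ch_n/M_n^p$.

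The main technical care is required not in the moment estimates themselves, which are standard, but in checking that the parameters fit together: the Doléans--Dade factor $Y^\epsilon$ contains $\psi'_{h_n}$ of size $1/h_n$, which is only tamed by the factor $1/M_n$ via the hypothesis $M_nh_n\to\infty$, and each auxiliary term produced by expanding $Y^\epsilon$, $(Y^\epsilon)^{-1}$, or the drift-Lebesgue correction in \eqref{eq: model X dot} must be verified, using $h_n\le\Delta_n$, $1/M_n=O(h_n^\beta)$ and $\beta\ge 3$, to be negligible compared to the leading $c\Delta_n^{p/2}/M_n^p$ bound (respectively $ch_n/M_n^p$ after integration).
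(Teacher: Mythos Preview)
Your approach is correct and leads to the same bounds, but it differs from the paper's route. The paper starts from the linear SDE form \eqref{eq: def x dot} for $\dot{\widehat{X}}^\epsilon$, applies BDG and Jensen to obtain a recursive inequality of the type
\[
\E_{x_0}[|\dot{\widehat{X}}^\epsilon_u|^p]\le c\,\frac{\Delta_n^{p/2-1}}{(M_nh_n)^p}\int_0^u\E_{x_0}[|\dot{\widehat{X}}^\epsilon_s|^p]\,ds+c\,\frac{\Delta_n^{p/2-1}}{M_n^p}\int_0^u\E_{x_0}[|\psi_{h_n}(\widehat{X}^\epsilon_s)|^p]\,ds,
\]
and closes it by Gronwall. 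This yields a single intermediate estimate, $M_{\Delta_n}\le c\,\frac{\Delta_n^{p/2-1}}{M_n^p}\int_0^{\Delta_n}\E_{x_0}[|\psi_{h_n}(\widehat{X}^\epsilon_s)|^p]\,ds$, from which \eqref{eq: control Xdot} follows by bounding $|\psi_{h_n}|\le\|\psi\|_\infty$, and \eqref{eq: control Xdot integrated} follows by integrating in $x_0$ and invoking point~1 of Lemma~\ref{l: bound stationarity}. You instead work from the explicit variation-of-constants representation \eqref{eq: model X dot} and rely on uniform controls on $Y^\epsilon$ and $(Y^\epsilon)^{-1}$; this is perfectly valid, and in fact the paper proves the even stronger \emph{pathwise} bound \eqref{eq: Y bounded} elsewhere, so your Cauchy--Schwarz step could be replaced by a direct $L^\infty$ bound. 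The trade-off: the Gronwall argument never needs to touch $Y^\epsilon$ and produces both statements from one inequality, while your route is more explicit but has to track the Dol\'eans--Dade factor and the Lebesgue correction separately. Either way the same asymptotic constraints ($M_nh_n\to\infty$, $h_n\le\Delta_n$, $1/M_n=O(h_n^\beta)$ with $\beta\ge3$) are what make the remainder terms negligible.
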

}
{\modc
Moreover, some rough estimation are enough to bound $I_1$ and $I_2$, while we need sharper bounds for the analysis of $I_3$. This is {\revarn shown} in Lemma \ref{l: I1 I2} below, whose proof is in the {\revar Appendix \ref{s: technical}.} 
\begin{lemma}
Let $I_1$ and $I_2$ be as above. Then, there exists a constant $c > 0$ such that 
$$I_1 + I_2 \le c \frac{h_n}{M_n^2}.$$
\label{l: I1 I2}
\end{lemma}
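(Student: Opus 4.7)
The plan is to exploit Jensen's inequality $\E_{x_0}[\E_{x_0}[W \mid \widehat{X}_{\Delta_n}^\epsilon]^2] \le \E_{x_0}[W^2]$ to remove the inner conditional expectation in \eqref{eq: def I1-I3}, so that it suffices to bound
\[
J_j := \int_{|x_0|\le \Delta_n^{1/2-\gamma}} \E_{x_0}\!\left[\left(\frac{\bracket{D_\cdot \widehat{X}_{\Delta_n}^\epsilon}{(D_\cdot \dot{\widehat{X}}_{\Delta_n}^\epsilon)_j}}{\bracket{D_\cdot \widehat{X}_{\Delta_n}^\epsilon}{D_\cdot \widehat{X}_{\Delta_n}^\epsilon}}\right)^{\!2}\right] \pi^\epsilon(x_0)\,dx_0, \quad j=1,2.
\]
Using \eqref{eq: DsXt}, the factors $Y_{\Delta_n}^\epsilon$ cancel between numerator and denominator. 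Mimicking the proof of Proposition \ref{p: bound W1}, I would replace the remaining $(Y^\epsilon)^{\pm 2}$ and $(1+\epsilon\psi_{h_n}/M_n)$ factors by $1$ and the denominator by $\Delta_n$, arguing that the resulting remainders are negligible via the Dol\'eans--Dade representation \eqref{eq: DD exp} and the smallness encoded in $1/M_n = O(h_n^\beta)$.

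For $J_1$, after these reductions the expression inside the square becomes, to leading order, $\tfrac{1}{\Delta_n}\int_0^{\Delta_n}\big[\tfrac{1}{M_n}\psi_{h_n}(\widehat{X}_u^\epsilon) + \tfrac{\epsilon}{M_n}\psi'_{h_n}(\widehat{X}_u^\epsilon)\dot{\widehat{X}}_u^\epsilon\big]\,du$. I would square it, apply Jensen in time, and then take expectation. The first summand directly yields $O(h_n/M_n^2)$ through Lemma \ref{l: bound stationarity}(1) with $p=2$. For the second, Cauchy--Schwarz in probability separates the two factors: Lemma \ref{l: bound stationarity}(2) with $(k,p)=(1,4)$ yields $h_n^{-3/2}$ after integration in $x_0$ against $\pi^\epsilon$, while \eqref{eq: control Xdot} with $p=4$ yields $\Delta_n/M_n^2$. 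Combined with the $1/M_n^2$ prefactor this produces a bound of order $\Delta_n/(h_n^{3/2}M_n^4)$, which is negligible compared to $h_n/M_n^2$ since $1/M_n = O(h_n^\beta)$ with $\beta \ge 3$ and $\Delta_n = O(n^{-\varepsilon})$.

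For $J_2$, I would exploit that each of the three summands in $(D_u \dot{\widehat{X}}_t^\epsilon)_2$ carries either an extra $\dot{\widehat{X}}_s^\epsilon$ factor, which is of order $\sqrt{\Delta_n}/M_n$ by \eqref{eq: control Xdot}, or an additional $1/M_n$ prefactor, along with derivatives $\psi_{h_n}^{(k)}$ of order at most two. Performing the same approximation of $Y^\epsilon$ and of the denominator, then Cauchy--Schwarz in time and in probability, then Lemma \ref{l: bound stationarity}(2) for the $\psi_{h_n}^{(k)}$ factors and Lemma \ref{l: bound moments Xdot} for the $\dot{\widehat{X}}$ factors, every resulting term should be $o(h_n/M_n^2)$ under our scaling. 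The main obstacle is the careful bookkeeping of the powers of $h_n$, $M_n$ and $\Delta_n$: pointwise estimates of the form $\psi_{h_n}^{(k)}\sim h_n^{-k}$ are too crude, and one must systematically invoke the integrated moment bounds from Lemma \ref{l: bound stationarity} to recover a factor $h_n$ from the compact support of $\psi_{h_n}$.
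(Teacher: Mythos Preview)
Your proposal is correct and follows essentially the same route as the paper: remove the conditioning by Jensen, separate $\psi_{h_n}^{(k)}$ from $\smash{\dot{\widehat{X}}}^\epsilon$ via Cauchy--Schwarz in probability, and conclude with Lemmas \ref{l: bound stationarity} and \ref{l: bound moments Xdot} together with $1/M_n=O(h_n^\beta)$, $\beta\ge3$. The only difference is that the paper applies Cauchy--Schwarz directly on the Hilbert bracket, $\bigl|\bracket{D\widehat{X}}{(D\smash{\dot{\widehat{X}}})_j}\bigr|/\|D\widehat{X}\|_H^2 \le \|(D\smash{\dot{\widehat{X}}})_j\|_H/\|D\widehat{X}\|_H$, and then invokes the pathwise lower bound \eqref{eq: mino bracket}, which bypasses the explicit $Y^\epsilon\approx1$ expansion you describe; this shortcut is cleaner but your approach works as well.
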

{\modarn Concerning $I_3$, approximating $Y_t^\epsilon$, $(Y_s^\epsilon)^{-1}$ and $D_s{\revnotat \widehat{X}}^\epsilon_u$ by the constant $1$, as in Proposition
	\ref{p: bound W1}, leads us to the following decomposition.}
\begin{lemma}
Let all the processes be as previously defined. Then, the following decomposition holds true:
$$\frac{\bracket{ D_\cdot {\revnotat \widehat{X}}_{\Delta_n}^\epsilon}{(D_\cdot {\revnotat \smash{\dot{\widehat{X}}}}_{\Delta_n}^\epsilon)_3 }}{\bracket{D_\cdot {\revnotat \widehat{X}}_{\Delta_n}^\epsilon}{D_\cdot {\revnotat \widehat{X}}_{\Delta_n}^\epsilon }}
= \frac{1}{\Delta_n M_n} \int_{0}^{\Delta_n} \int_u^{\Delta_n} \psi_{h_n}'({\revnotat \widehat{X}}_s^\epsilon) dB_s du + {\modar R_n^{(2)}}, $$
{\modar where 
	$R^{(2)}_n$ is such that $\int_{|x_0|\le \Delta_n^{1/2-\gamma}}\E_{x_0}[|R^{(2)}_n|^2]{\modarn \pi^\epsilon(x_0)}dx_0 \le \frac{c h_n}{M_n^2}$ for some $c > 0$.}
\label{l: I3}
\end{lemma}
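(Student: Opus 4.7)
The strategy is to make the quotient explicit and then perform controlled substitutions, replacing the Dol\'eans--Dade factor $Y^\epsilon$ and the perturbation $1+\tfrac{\epsilon}{M_n}\psi_{h_n}$ by $1$ while tracking the error. Using the explicit flow formula $D_u \widehat{X}_s^\epsilon = (Y_u^\epsilon)^{-1} Y_s^\epsilon \bigl(1 + \tfrac{\epsilon}{M_n}\psi_{h_n}(\widehat{X}_u^\epsilon)\bigr)$ and observing that the resulting prefactor $(Y_u^\epsilon)^{-1}\bigl(1+\tfrac{\epsilon}{M_n}\psi_{h_n}(\widehat{X}_u^\epsilon)\bigr)$ is $\mathcal{F}_u$-measurable, it factors out of the It\^o integral defining $(D_u \dot{\widehat{X}}_{\Delta_n}^\epsilon)_3$. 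After multiplying by $D_u \widehat{X}_{\Delta_n}^\epsilon$ and integrating in $u$, the factor $(Y_{\Delta_n}^\epsilon)^2$ appears in both numerator and denominator of the quotient and cancels. One is left with $N/D$, where $N := \int_0^{\Delta_n} \alpha_u I_u\, du$, $D := \int_0^{\Delta_n} \alpha_u\, du$, $\alpha_u := (Y_u^\epsilon)^{-2}\bigl(1+\tfrac{\epsilon}{M_n}\psi_{h_n}(\widehat{X}_u^\epsilon)\bigr)^2$, and $I_u := \tfrac{1}{M_n}\int_u^{\Delta_n}\psi_{h_n}'(\widehat{X}_s^\epsilon)\,dB_s$. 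The claimed leading term corresponds to the formal replacement $\alpha_u \equiv 1$, and, writing $\rho_u := \alpha_u - 1$, the remainder becomes $R_n^{(2)} = \tfrac{1}{D}\int_0^{\Delta_n}\rho_u I_u\,du - \tfrac{1}{D\Delta_n}\int_0^{\Delta_n}I_u\,du\int_0^{\Delta_n}\rho_u\,du$.

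The proof then reduces to three moment inputs. For $\rho_u$: the term $(Y_u^\epsilon)^{-2}-1$ is expanded from the Dol\'eans--Dade representation via It\^o's formula into a stochastic integral $-\tfrac{2\epsilon}{M_n}\int_0^u\psi_{h_n}'(\widehat{X}_s^\epsilon)\,dB_s$ plus higher-order terms controlled by Burkholder--Davis--Gundy, while the $\tfrac{\epsilon}{M_n}\psi_{h_n}(\widehat{X}_u^\epsilon)$-piece is handled by Lemma \ref{l: bound stationarity}(1). For $I_u$: It\^o isometry together with Lemma \ref{l: bound stationarity}(2) with $k=p=2$ yield $\int_{\mathbb{R}}\mathbb{E}_{x_0}[I_u^2]\pi^\epsilon(x_0)\,dx_0 = O(\Delta_n/(h_n M_n^2))$. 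Finally, a lower bound $D \ge c\Delta_n$ is obtained on the event $\tilde\Omega_n$ of Step~1, where $\widehat{X}^\epsilon$ stays in $[-1,1]$ so that $\alpha_u$ is bounded below by a positive constant; on the complement $\tilde\Omega_n^c$, whose probability is negligible in the sense of Lemma \ref{l: AB}, a crude $L^4$ bound (via Lemma \ref{L: maj grossiere Malliavin}) suffices. Assembling these ingredients through Cauchy--Schwarz in $L^4$, BDG and Fubini, each term of $R_n^{(2)}$ picks up two factors of $1/M_n$ (from the pair $\rho_u \cdot I_u$), a factor $1/\Delta_n^2$ from $1/D^2$, and, after integration against $\pi^\epsilon(x_0)\,dx_0$, exactly the $h_n$ gain from localization of $\psi_{h_n}$ and $\psi_{h_n}'$. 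The conditions $1/M_n = O(h_n^\beta)$ with $\beta \ge 3$ and $\Delta_n \ge (1/T_n)^{1/(2\beta)}$ ensure that all subleading cross terms are $o(h_n/M_n^2)$.

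The main obstacle is the bookkeeping of powers of $h_n$. Since $\|\psi_{h_n}'\|_\infty = O(h_n^{-1})$, naive sup-norm bounds would yield $O(h_n^{-2}/M_n^2)$, far from the target $O(h_n/M_n^2)$. The whole argument relies on every occurrence of $\psi_{h_n}'(\widehat{X}_s^\epsilon)$ appearing squared before integration (achieved through It\^o's isometry or BDG), so that Lemma \ref{l: bound stationarity}(2) can transform the $h_n^{-2}$ loss into a single $h_n^{-1}$, finally multiplied by the $h_n$ coming from another squared $\psi_{h_n}$ factor elsewhere in the product. Keeping these pairings straight across the several cross terms, and checking that $1/D$ does not reintroduce spurious negative powers of $h_n$, is the delicate technical step that drives the proof.
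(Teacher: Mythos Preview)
Your approach is correct and in some respects cleaner than the paper's. Your observation that $(Y_s^\epsilon)^{-1}D_u\widehat X_s^\epsilon=(Y_u^\epsilon)^{-1}(1+\tfrac{\epsilon}{M_n}\psi_{h_n}(\widehat X_u^\epsilon))$ is $\mathcal F_u$-measurable, together with the cancellation of $(Y_{\Delta_n}^\epsilon)^2$, yields the compact representation $N/D$ with only two remainder terms; the paper instead expands the full expression into four pieces $R_n^{(2,1)},\dots,R_n^{(2,4)}$ by successively replacing each factor $Y_{\Delta_n}^\epsilon$, $(Y_u^\epsilon)^{-1}$, $D_u\widehat X_s^\epsilon$ and the bracket by $1$.

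Two simplifications you are missing. First, the lower bound $D\ge c\Delta_n$ holds almost surely, not just on $\tilde\Omega_n$: it is an immediate consequence of the deterministic bound \eqref{eq: majo DX} (equivalently \eqref{eq: mino bracket}), so there is no need to invoke Lemma~\ref{l: AB} or Lemma~\ref{L: maj grossiere Malliavin} here. Second, and more importantly, the control of $\rho_u=\alpha_u-1$ does not require any It\^o expansion or BDG: the paper has already established the \emph{deterministic} pointwise bound $\sup_{u\in[0,\Delta_n]}|(Y_u^\epsilon)^{-1}-1|\le c\Delta_n/(M_nh_n^2)$ in \eqref{eq: control Y inv}, which together with $|\tfrac{\epsilon}{M_n}\psi_{h_n}|\le c/M_n$ gives $|\rho_u|\le c\Delta_n/(M_nh_n^2)$ almost surely. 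With this, your two remainder terms are bounded directly by $\tfrac{c}{\Delta_n}\cdot\tfrac{\Delta_n}{M_nh_n^2}\int_0^{\Delta_n}|I_u|\,du$, whose square integrates (via Jensen in $u$, It\^o isometry for $I_u$, and Lemma~\ref{l: bound stationarity}(2)) to $c\Delta_n^3/(M_n^4h_n^5)$, matching the paper's bound for $R_n^{(2,1)}$ and $R_n^{(2,3)}$. Your proposed $L^4$ Cauchy--Schwarz route also works, but the deterministic bound on $\rho_u$ makes the ``delicate pairing'' you describe in your last paragraph unnecessary: only one application of Lemma~\ref{l: bound stationarity}(2), to the single factor $(\psi_{h_n}')^2$ inside $I_u$, is needed.
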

{\modarn The proof of Lemma \ref{l: I3} is postponed to the {\revar Appendix \ref{s: technical}.}}

\textbf{Step 5: Handling the principal term of ${\revnotat\widehat{W}}^2_{x_0, \Delta_n, \epsilon}$.} \\
{\modar We are left to study the main term of $I_3$ coming from the expansion given in the Lemma \ref{l: I3}. The first step is to get rid of the stochastic integral by application of {\revar the  It\^o's formula,} as given by the next lemma whose proof is given in the {\revar Appendix \ref{s: technical}.} 
\begin{lemma}\label{L: suppressing dB in principal term}
	We have 
		\begin{equation}
			\frac{1}{\Delta_n M_n} \int_{0}^{\Delta_n} \int_u^{\Delta_n} \psi_{h_n}'({\revnotat \widehat{X}}_s^\epsilon) dB_s du
				=-\frac{1}{2\Delta_n M_n}  \int_0^{\Delta_n}  \psi''_{h_n}({\revnotat \widehat{X}}_s^\epsilon)sds+R^{(3)}_n
			\label{eq: suppressing dB in principal term}
			\end{equation}
		where 
		$R^{(3)}_n$ is such that $\int_{|x_0|\le \Delta_n^{1/2-\gamma}}\E_{x_0}[|R^{(3)}_n|^2]dx_0 \le \frac{c h_n}{M_n^2}$ for some $c > 0$.
\end{lemma}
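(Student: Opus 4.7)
The plan is to first apply a stochastic Fubini theorem to swap the order of integration on the LHS of \eqref{eq: suppressing dB in principal term}, obtaining
\[
\frac{1}{\Delta_n M_n}\int_0^{\Delta_n}\!\!\int_u^{\Delta_n}\!\psi'_{h_n}({\revnotat \widehat{X}}^\epsilon_s)\,dB_s\,du
=\frac{1}{\Delta_n M_n}\int_0^{\Delta_n}\!\! s\,\psi'_{h_n}({\revnotat \widehat{X}}^\epsilon_s)\,dB_s,
\]
since $\int_0^s du=s$; this is justified because $\psi'_{h_n}$ is bounded and the integration region is finite. The next step is to eliminate the $dB_s$ by It\^o's formula. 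Using that $d{\revnotat \widehat{X}}^\epsilon_s=a_\epsilon({\revnotat \widehat{X}}^\epsilon_s)\,dB_s$ with $a_\epsilon(x):=1+\frac{\epsilon}{M_n}\psi_{h_n}(x)$, I would apply It\^o to the product $s\,\psi_{h_n}({\revnotat \widehat{X}}^\epsilon_s)$ between $0$ and $\Delta_n$ and solve for the $dB_s$-integral:
\[
\int_0^{\Delta_n}\! s\,\psi'_{h_n}({\revnotat \widehat{X}}^\epsilon_s)a_\epsilon({\revnotat \widehat{X}}^\epsilon_s)\,dB_s
=\Delta_n\psi_{h_n}({\revnotat \widehat{X}}^\epsilon_{\Delta_n})-\!\int_0^{\Delta_n}\!\!\psi_{h_n}({\revnotat \widehat{X}}^\epsilon_s)\,ds-\tfrac12\!\int_0^{\Delta_n}\!\! s\,\psi''_{h_n}({\revnotat \widehat{X}}^\epsilon_s)a_\epsilon({\revnotat \widehat{X}}^\epsilon_s)^2\,ds.
\]

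To recover the desired integrand $\psi'_{h_n}$ (without the extra factor $a_\epsilon$) on the left and the integrand $\psi''_{h_n}$ (without the $a_\epsilon^2$) on the right, I would insert the decompositions $\psi'_{h_n}=\psi'_{h_n}a_\epsilon-\frac{\epsilon}{M_n}\psi'_{h_n}\psi_{h_n}$ and $a_\epsilon^2=1+(a_\epsilon^2-1)$. Substituting these and rearranging produces the announced principal term $-\frac{1}{2\Delta_n M_n}\int_0^{\Delta_n}\! s\,\psi''_{h_n}({\revnotat \widehat{X}}^\epsilon_s)\,ds$ together with the four-piece remainder
\[
R^{(3)}_n=\tfrac{\psi_{h_n}({\revnotat \widehat{X}}^\epsilon_{\Delta_n})}{M_n}-\tfrac{1}{\Delta_n M_n}\!\int_0^{\Delta_n}\!\!\psi_{h_n}({\revnotat \widehat{X}}^\epsilon_s)\,ds-\tfrac{1}{2\Delta_n M_n}\!\int_0^{\Delta_n}\!\! s\,\psi''_{h_n}({\revnotat \widehat{X}}^\epsilon_s)(a_\epsilon^2-1)\,ds-\tfrac{\epsilon}{\Delta_n M_n^2}\!\int_0^{\Delta_n}\!\! s\,\psi'_{h_n}\psi_{h_n}({\revnotat \widehat{X}}^\epsilon_s)\,dB_s.
\]

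It remains to check that each of these four pieces has $L^2(dx_0)$-norm on $\{|x_0|\le\Delta_n^{1/2-\gamma}\}$ bounded by $ch_n/M_n^2$. Since $\pi^\epsilon$ is bounded below on any fixed compact neighbourhood of $x^\star=0$, one first passes to an upper bound against $\pi^\epsilon(x_0)\,dx_0$ and then invokes Lemma~\ref{l: bound stationarity}: the first two terms are $O(h_n/M_n^2)$ directly by point~1 (with $p=2$); the third term, using $a_\epsilon^2-1=\frac{2\epsilon}{M_n}\psi_{h_n}+\frac{\epsilon^2}{M_n^2}\psi_{h_n}^2$ together with Cauchy--Schwarz in time and point~2 of the lemma applied to $\psi''_{h_n}$, is of order $\Delta_n^2/(M_n^4 h_n^{3})$; the last, after It\^o's isometry in $dB_s$ and point~2 applied to $\psi'_{h_n}$, is of order $\Delta_n/(M_n^4 h_n)$. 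Under $1/M_n=O(h_n^\beta)$ with $\beta\ge 3$ and $\Delta_n\to 0$, both latter quantities are $o(h_n/M_n^2)$. The main subtlety is the bookkeeping of the two substitutions $a_\epsilon\to 1$ and $a_\epsilon^2\to 1$: the corrections each carry an extra factor $\psi_{h_n}({\revnotat \widehat{X}}^\epsilon_s)/M_n$, and the product structure $\psi_{h_n}^{(k)}\psi_{h_n}$ is precisely what yields a single factor $h_n$ after averaging against $\pi^\epsilon$, so that the gain $1/M_n$ absorbs the blow-up of $\psi'_{h_n}$ or $\psi''_{h_n}$ in the regime $h_n\to 0$.
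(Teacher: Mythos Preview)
Your proof is correct and follows essentially the same strategy as the paper: express the stochastic integral via It\^o's formula, isolate the principal term $-\frac{1}{2\Delta_n M_n}\int_0^{\Delta_n} s\,\psi''_{h_n}(\widehat X^\epsilon_s)\,ds$, and control the remainders through Lemma~\ref{l: bound stationarity}. The only difference is the order of operations: you apply stochastic Fubini first to obtain $\frac{1}{\Delta_n M_n}\int_0^{\Delta_n} s\,\psi'_{h_n}(\widehat X^\epsilon_s)\,dB_s$ and then It\^o's formula to $s\,\psi_{h_n}(\widehat X^\epsilon_s)$, while the paper applies It\^o to the primitive $\Xi$ of $\psi'_{h_n}/a_\epsilon$ on each slice $[u,\Delta_n]$ and integrates in $u$ afterwards. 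Your ordering is slightly more direct (it avoids introducing $\Xi$ and its decomposition $\Xi=\psi_{h_n}+n_{h_n}$), but the resulting remainder pieces match up: your terms 1--2 are exactly the paper's $R^{(3,1)}$, and your terms 3--4 play the role of the paper's $R^{(3,2)}$ and $R^{(3,3)}$, with the same orders $\Delta_n^2/(M_n^4 h_n^3)$ and $\Delta_n/(M_n^4 h_n)$ appearing after use of Lemma~\ref{l: bound stationarity}.
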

As a consequence, the main term in $I_3$ is given by 
\begin{equation}{\label{eq: 28.5}}
\hat{I}_3 := \int_{|x_0| \le \Delta_n^{\frac{1}{2} - \gamma}} \E_{x_0}[\big(\E_{x_0}[- \frac{1}{2 \Delta_n M_n} \int_0^{\Delta_n} \psi_{h_n}'' ({\revnotat \widehat{X}}_s^\epsilon)sds \mid {\revnotat \widehat{X}}^\epsilon_{\Delta_n}]\big)^2] {\modarn \pi^\epsilon}(x_0) dx_0.
\end{equation}
Here, using Jensen's inequality to get rid of the conditional expectation does not give the correct rate, and so we have to analyze in details the conditional expectation.
Hence, we need a sharp bound for this quantity, as the one gathered in Lemma \ref{l: main term malliavin} stated below and {\revarn shown} in the {\revar Appendix \ref{s: technical}.}
\begin{lemma}{\label{l: main term malliavin}}
	For any $0<\eta<1/2$, there exists $C_\eta$ such that
	{\modarn 
	\begin{multline} \label{E: terme pal crochet psi pp}
		|\E_x\left[
		\frac{1}{\Delta_n}\int_0^{\Delta_n} 
		(\psi_{h_n})^{\prime\prime}({\revnotat \widehat{X}}^\epsilon_s) sds
		\mid {\revnotat \widehat{X}}^\epsilon_{\Delta_n}=y\right]|
		\\
		\le C_\eta \left[(1+\frac{h_n}{\sqrt{\Delta_n}})\Indi{\Abs{y}\le 2h_n}+
		\frac{h_n}{\sqrt{\Delta_n}} \Phi( \frac{1}{\eta}\frac{\Abs{y}-h_n}{\sqrt{\Delta_n}})
		\Indi{\Abs{y}> 2h_n}
		\right]
		e^{\frac{3\eta(x-y)^2}{\Delta_n}}
		, 		
	\end{multline}
}
	where $\Phi(u)=1+\frac{e^{-u^2}}{u}$ for $u>0$.
\end{lemma}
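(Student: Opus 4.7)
The plan is to reduce the conditional expectation to a local-time functional of a process close to Brownian motion, and then apply the conditional local-time bounds of Lemma \ref{L: majo func Local time cond Delta} together with the bridge density estimates from Theorem \ref{thm : transition density}.

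First, by the occupation times formula applied to $\widehat{X}^\epsilon$, whose quadratic variation is $\sigma_\epsilon(z)^2 ds$ with $\sigma_\epsilon(z)=1+\frac{\epsilon}{M_n}\psi_{h_n}(z)=1+O(h_n^\beta)$, I would write
\begin{equation*}
\int_0^{\Delta_n}\psi''_{h_n}(\widehat{X}^\epsilon_s)\,s\,ds
=\int_{\R}\frac{\psi''_{h_n}(z)}{\sigma_\epsilon(z)^2}\int_0^{\Delta_n}s\,dL^z_s(\widehat{X}^\epsilon)\,dz.
\end{equation*}
A time integration by parts turns the inner integral into $\Delta_n L^z_{\Delta_n}-\int_0^{\Delta_n}L^z_s\,ds$, so after dividing by $\Delta_n$ the quantity to control is a linear combination of $\E_x[L^z_{\Delta_n}(\widehat{X}^\epsilon)\mid\widehat{X}^\epsilon_{\Delta_n}=y]$ and $\frac{1}{\Delta_n}\int_0^{\Delta_n}\E_x[L^z_s(\widehat{X}^\epsilon)\mid\widehat{X}^\epsilon_{\Delta_n}=y]\,ds$, integrated against $\psi''_{h_n}(z)/\sigma_\epsilon(z)^2\,dz$ on $[-h_n,h_n]$.

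Second, since $\sigma_\epsilon=1+O(h_n^\beta)$, a Girsanov change of measure (whose density and inverse have $L^p$ norms bounded uniformly in $n$, thanks to $1/M_n=O(h_n^\beta)$) allows me to replace $\widehat{X}^\epsilon$ by a Brownian motion $\tilde{B}$ with $\tilde{B}_0=x$, modulo a density ratio at $(x,y)$ over time $\Delta_n$. By Theorem \ref{thm : transition density} this ratio is bounded by $e^{\eta(x-y)^2/\Delta_n}$, absorbing one portion of the exponent $3\eta$ in the claim. Then Lemma \ref{L: majo func Local time cond Delta} (combined with the classical Brownian-bridge local-time formulas of \cite{Pitman}) provides a bound for $\E_x[L^z_s(\tilde{B})\mid\tilde{B}_{\Delta_n}=y]$ of order $\sqrt{s(\Delta_n-s)/\Delta_n}$ multiplied by the Gaussian penalty $\exp\!\bigl(-c(|z-x|^2/s+|z-y|^2/(\Delta_n-s)-|x-y|^2/\Delta_n)\bigr)$.

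Third, the case split $|y|\le 2h_n$ versus $|y|>2h_n$ arises naturally. When $|y|\le 2h_n$, the bridge passes through $[-h_n,h_n]$ with no extra exponential penalty, and integrating $\|\psi''_{h_n}\|_\infty\cdot (2h_n)\sim 1/h_n$ against the $\sqrt{s(\Delta_n-s)/\Delta_n}$ local-time estimate, then dividing by $\Delta_n$, produces the $h_n/\sqrt{\Delta_n}$ term; the additive constant $1$ accounts for the boundary contribution $\Delta_n L^z_{\Delta_n}$, which after dividing by $\Delta_n$ is of order unity and is not dominated by the integral. When $|y|>2h_n$, the Gaussian factor $\exp(-c|z-y|^2/(\Delta_n-s))$ at $z\in[-h_n,h_n]$ forces an additional decay in $(|y|-h_n)/\sqrt{\Delta_n-s}$; integrating this in $s$ and invoking the Mills-ratio identity $1+e^{-u^2}/u=\Phi(u)$ yields the function $\Phi\bigl((|y|-h_n)/(\eta\sqrt{\Delta_n})\bigr)$, where $\eta$ parametrizes how much of the exponential decay is sacrificed to the outer factor $e^{3\eta(x-y)^2/\Delta_n}$.

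The main obstacle is the time weight $s\,dL^z_s$, which lies outside the regime $\gamma\le 1/2$ covered by \cite{Gradinaru_et_al_1999} and therefore demands the careful extension of bridge local-time estimates alluded to in the text preceding Lemma \ref{L: majo func Local time cond Delta}. The delicate point is the bookkeeping of the Gaussian factor $\exp(-c(x-y)^2/\Delta_n)$: part of it must be absorbed into the ``free'' multiplier $e^{3\eta(x-y)^2/\Delta_n}$ (forcing the Girsanov penalty and one copy of the bridge density to fit comfortably), while the remaining decay in $|y|-h_n$ must retain the Mills-ratio sharpness encoded by $\Phi$ in order not to lose the $h_n/\sqrt{\Delta_n}$ factor in the far-tail regime. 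This balance is precisely what dictates the divergence of $C_\eta$ as $\eta\to 0$.
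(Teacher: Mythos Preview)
Your outline has two genuine gaps.

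First, Girsanov is the wrong tool in Step~2: it changes the drift but not the diffusion coefficient, and $\widehat{X}^\epsilon$ is already driftless with a non-constant $\sigma_\epsilon$. The paper handles this by using Azencott's two-sided Gaussian bounds on the transition density and its first two $z$-derivatives, with variance parameters $(1\pm\eta)$ bracketing $\sigma_\epsilon^2$; this is what supplies the factor $e^{2\eta(x-y)^2/\Delta_n}$ and the comparison with a Brownian bridge of diffusion $\sqrt{1+\eta}$.

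Second, and more seriously, your integration by parts in time followed by the crude bound $\|\psi''_{h_n}\|_\infty\cdot 2h_n\sim 1/h_n$ loses the cancellation carried by $\psi''_{h_n}$. For $|y|\le 2h_n$ and $z\in[-h_n,h_n]$, the conditional bridge local time $\E_x[L^z_{\Delta_n}\mid \widehat{X}^\epsilon_{\Delta_n}=y]$ is of order $\sqrt{\Delta_n}$, so your estimate yields $\sqrt{\Delta_n}/h_n$, not $h_n/\sqrt{\Delta_n}$ (the arithmetic in your Step~3 is inverted). Since $h_n\le \Delta_n$, the ratio $\sqrt{\Delta_n}/h_n$ diverges, and the bound is useless. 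The paper's decisive step is different: it integrates by parts \emph{in the space variable $z$}, transferring both derivatives from $\psi''_{h_n}$ onto the bridge density $p^\epsilon_{s,\Delta_n}(z\mid x,y)$. The resulting factor $\bigl[\frac{1}{s}+\frac{1}{\sqrt{s}\sqrt{\Delta_n-s}}+\frac{1}{\Delta_n-s}\bigr]$, multiplied by the weight $s$, gives $1+\frac{s}{\Delta_n-s}$, which via occupation time becomes $\int_0^{\Delta_n}[1+\frac{s}{\Delta_n-s}]\,dL^z_s(\tilde{B})$ conditioned on $\tilde{B}_{\Delta_n}=y$. After time reversal of the bridge this is $\int_0^{\Delta_n}[1+\frac{\Delta_n-s}{s}]\,dL^z_s$, and Lemma~\ref{L: majo func Local time cond Delta} with $\beta=0$ and $\beta=1$ is exactly designed to control these two pieces, producing the $\Phi$ profile in the far region and the $O(1)$ bound near $y$. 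The integrand is now the bounded function $|\psi_{h_n}(z)|$, whose $z$-integral over $[-h_n,h_n]$ gives the correct factor $h_n$. Your route could only be rescued by a further integration by parts in $z$ on the local-time expectation, at which point you recover the paper's argument.
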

It follows {\modarn using the Gaussian control \eqref{E:Azencott_derivative_x}, given in the {\revar Appendix \ref{s: technical},} on the transition density of the diffusion process, that} 
\begin{align}{\label{eq: principalv2}}
	& \int_{|x_0| \le \Delta_n^{\frac{1}{2} - \gamma}} \E_{x_0}[\big(\E_{x_0}[- \frac{1}{2 \Delta_n M_n} \int_0^{\Delta_n} \psi_{h_n}'' ({\revnotat \widehat{X}}_s^\epsilon)sds \mid {\revnotat \widehat{X}}^\epsilon_{\Delta_n}]\big)^2] {\modarn\pi^\epsilon}(x_0) dx_0 \\
	& \le \frac{c}{M_n^2} \int_{\R^2} \left[ (1+\frac{h_n}{\sqrt{\Delta_n}})^2\Indi{\Abs{y}\le 2h_n}+
	\frac{h_n^2}{{\Delta_n}} \Phi^2( \frac{1}{\eta}\frac{\Abs{y}-h_n}{\sqrt{\Delta_n}})
	\Indi{\Abs{y}> 2h_n}\right]
	e^{\frac{6\eta(x_0-y)^2}{\Delta_n}}\frac{e^{{\modarn - \frac{(x_0-y)^2}{c\Delta_n}}}}{\sqrt{\Delta_n}} dy {\modarn\pi^\epsilon}(x_0) dx_0{\modarn ,} \nonumber
\end{align}
{\modarn for some constant $c>0$.}
Remarking that $\eta \in (0, \frac{1}{2})$ can be chosen arbitrarily small, so that we have in particular ${\modarn c^{-1}} - 6 \eta > 0$, it is easy to see that 
{\revar
	\begin{multline*}
\int_{\R^2} (1+\frac{h_n}{\sqrt{\Delta_n}})^2\Indi{\Abs{y}\le 2h_n} \frac{e^{- \frac{({ c^{-1}}- 6 \eta)(x_0-y)^2}{\Delta_n}}}{\sqrt{\Delta_n}} dy {\pi^\epsilon}(x_0) dx_0 \\
\le (\sup_{x_0\in \mathbb{R},\epsilon\in[0,1]} \pi^\epsilon(x_0)) \times \int_\R (1+\frac{h_n}{\sqrt{\Delta_n}})^2 \Indi{|y|\le 2h_n}dy \le  c h_n,
\end{multline*}
where we used $h_n \le \Delta_n \le \sqrt{\Delta_n}$. It} provides the wanted bound on the first term of \eqref{eq: principalv2}. Regarding the second, we replace the function $\Phi$ obtaining 
\begin{align}{\label{eq: final}}
	& \frac{c}{M_n^2}\int_{\R^2} \frac{h_n^2}{{\Delta_n}}(1 + e^{- \frac{1}{\eta^2}\frac{(|y| - h_n)^2}{\Delta_n}}\frac{\eta \sqrt{\Delta_n}}{|y| - h_n})^2 \Indi{\Abs{y} > 2h_n} \frac{e^{- \frac{({\modarn c^{-1}}- 6 \eta)(x_0-y)^2}{\Delta_n}}}{\sqrt{\Delta_n}} dy {\modarn\pi^\epsilon}(x_0) dx_0 \nonumber \\
	& \le  \frac{c}{M_n^2}\frac{h_n^2}{{\Delta_n}} \int_{\R^2}  \frac{e^{- \frac{({\modarn c^{-1}}- 6 \eta)(x_0-y)^2}{\Delta_n}}}{\sqrt{\Delta_n}} dy {\modarn\pi^\epsilon}(x_0) dx_0 \nonumber \\
	& + \frac{c}{M_n^2} \frac{h_n^2}{{\Delta_n}} \int_{\R^2} e^{- \frac{2}{\eta^2}\frac{(|y| - h_n)^2}{\Delta_n}}\frac{\eta^2{\Delta_n}}{(|y| - h_n)^2} \Indi{\Abs{y} > 2h_n} \frac{e^{- \frac{({\modarn c^{-1}}- 6 \eta)(x_0-y)^2}{\Delta_n}}}{\sqrt{\Delta_n}} dy {\modarn\pi^\epsilon}(x_0) dx_0 \nonumber \\
	& \le \frac{c}{M_n^2}\frac{h_n^2}{{\Delta_n}}(1 + \int_{|y| > 2 h_n} e^{- \frac{2}{\eta^2}\frac{(|y| - h_n)^2}{\Delta_n}}\frac{\eta^2{\Delta_n}}{(|y| - h_n)^2} dy ).
\end{align}
On the last integral we apply the change of variable $\tilde{y}:= \frac{y - h_n}{\eta \sqrt{\Delta_n}}$ {\modarn on the part $y>2h_n$, and use symmetry of the integrand on the part $y<-2h_n$}. We obtain it is 
{\modarn smaller than 
} 
\begin{align*}
	&\frac{c}{M_n^2}\frac{h_n^2}{{\Delta_n}} \int_{\frac{h_n}{\eta \sqrt{\Delta_n}}}^{\infty} \sqrt{\Delta_n} \frac{e^{- 2 \tilde{y}^2}}{\tilde{y}^2} d\tilde{y} \\
	& \le \frac{c}{M_n^2}\frac{h_n^2}{{\Delta_n}} \sqrt{\Delta_n} \frac{\sqrt{\Delta_n} \eta}{h_n} = \frac{c h_n}{M_n^2},
\end{align*}
where we have used that ${\modarn \int_t^{\infty} \frac{e^{- u^2}}{u^2} du    = O\left( \frac{1}{t}\right)}$ and in the last {\revar inequality} we have included $\eta$ in the constant $c$. \\
{\modchi The control on the principal term of $\widehat{W}^2_{x_0, \Delta_n, \epsilon}$ is concluded by replacing the estimation here above in \eqref{eq: final} and remarking that, as $h_n \le \Delta_n$, it {\revar holds} $\frac{c}{M_n^2} \frac{h_n^2}{\Delta_n} \le \frac{c h_n}{M_n^2}$. We deduce $\hat{I}_3 \le \frac{c h_n}{M_n^2}$. Collecting \eqref{eq: def I1-I3} with Lemmas \ref{l: I1 I2}, \ref{l: I3}, \ref{L: suppressing dB in principal term} and with \eqref{eq: 28.5} it follows 
\begin{equation}
\int_{|x_0| \le \Delta_n^{\frac{1}{2} - \gamma}} \E_{x_0}[(\E_{x_0} [{\revnotat\widehat{W}}^2_{x_0, \Delta_n, \epsilon}  | {\revnotat \widehat{X}}_{\Delta_n}^\epsilon ])^2] 
{\modarn \pi^\epsilon} (x_0) {\modarn d{x_0}} \le c \frac{h_n}{M_n^2}.
\label{eq: 31.5}
\end{equation}
From \eqref{eq: 22.5}, \eqref{eq: 23.5} and \eqref{eq: 31.5} we obtain \eqref{eq: 16.5}, which concludes the proof. }

}
}
\begin{appendix}\label{Appendix : tech}
{\revar 
\section{proof of technical results}{\label{s: technical}}
}
This section is devoted to the proof of the results which are more technical and for which some preliminaries are needed.
{\revarn 
\subsection{Proof of Lemma \ref{l: hyper contract}}
\begin{proof}
For $\psi \in L^1(\pi)$, we write $\pi(x) P_s(\psi)(x)=\pi(x) \int_\R p_s(x,y) \psi(y) dy=\int_\R \frac{\pi(x) p_s(x,y)}{\pi(y)} \pi(y) \psi(y) dy$. Using that the one dimensional diffusion $X$ is reversible we have $\pi(x)p_s(x,y)=\pi(y)p_s(y,x)$, and as a result, $\pi(x) P_s(\psi)(x)=	\int_\R p_s(y,x) \pi(y) \psi(y) dy$. From Theorem \ref{thm : transition density}, we have for $s \in (0,1]$, the upper bound $p_s(y,x) \le c/\sqrt{s}$, where the constant $c$ is uniform on the class $\Sigma$. We deduce  $\pi(x) |P_s(\psi)(x)| \le \frac{c}{\sqrt{s}} \int_\R \pi(y) |\psi(y)| dy = \frac{c}{\sqrt{s}} \norm{\psi}_{L^1(\pi)}$.	
\end{proof}
}
\subsection{Proof of Lemma \ref{l: AB}}
\begin{proof}
	Let us denote $g_n({\revar B_n})=\E [{\revar A_n} \mid {\revar B_n}]$ and 
	$g'_n({\revar B'_n})=\E[{\revar A'_n}\mid {\revar B'_n}]$ where $g_n$ and $g'_n$ are some measurable functions. Let $p'<p$, by duality, we have
	\begin{equation} \label{eq : lemma 1 duality}
		\E\left[\abs{g_n({\revar B_n})-g'_n({\revar B'_n})}^{p'}\right]^{1/p'}
=\sup_{\underset{Z=h({\revar B_n},{\revar B'_n})}{\norm{Z}_{q'}\le1}} 
\E\left[(g_n({\revar B_n})-g'_n({\revar B'_n}) ) Z\right]
	\end{equation}
	where $q'=\frac{p'}{p'-1}$.
For $M>0$, we set $Z^{(M)}=Z \Indi{\abs{Z}\le M}$ and write
{\revar 
\begin{equation} \label{eq : splitting lemma 1}
\abs{\E\left[(g_n({\revar B_n})-g'_n({\revar B'_n}) ) Z\right]}
\le 
\abs{\E\left[(g_n({\revar B_n})-g'_n({\revar B'_n}) ) Z^{(M)}\right]}
+
\abs{\E\left[(g_n({\revar B_n})-g'_n({\revar B'_n}) ) Z  \Indi{\abs{Z}>M}\right]}.
\end{equation}
}
Using consecutively {\revarn H\"older's inequality where $q=p/(p-1)$ and Minkowski's inequality in the first line below, Jensen's inequality in the second line,  and \eqref{eq: maj grossiere moment} in the third line, we can write}
\begin{align*}
	\abs{\E\left[(g_n({\revar B_n})-g'_n({\revar B'_n}) ) Z  \Indi{\abs{Z}>M}\right]}
&\le 
	\left\{
	\E[|g_n({\revar B_n})|^p]^{1/p}+ \E[|g_n({\revar B'_n})|^p]^{1/p}
\right\} 
\times \E[|Z|^q \Indi{\abs{Z>M}}]^{1/q}
\\
&\le 
	\left\{
\E[|{\revar A_n}|^p]^{1/p}+ \E[|{\revar A'_n}|^p]^{1/p}
\right\} 
\times \E[|Z|^q \Indi{\abs{Z>M}}]^{1/q} 
\\
&\le {\revarn \kappa_p} n^{r_0} \E[|Z|^q \Indi{\abs{Z>M}}]^{1/q}.
 \end{align*}
As $p>p'$ we have $q<q'$ and using  again H\"older inequality we deduce
$E[|Z|^q \Indi{\abs{Z>M}}]^{1/q} \le E[|Z|^{q'}]^{1/{q'}} \mathbb{P}(\abs{Z}>M)^{(q-q')/{\modarn (q'q)}} \le (1/M)^{{\modarn (q-q')/q}}$, since $\norm{Z}_{q'}\le 1$. As a consequence, choosing $M=n^{\frac{(r_0+r)q}{{\modarn (q-q')}}}$  for $r>0$, we deduce
\begin{equation}\label{eq : diff est cond Z indi M}
\sup_{\underset{Z=h({\revar B_n},{\revar B'_n})}{\norm{Z}_{q'}\le1}}
\abs{\E\left[(g_n({\revar B_n})-g'_n({\revar B'_n}) ) Z  \Indi{\abs{Z}>M}\right]}
\le {\revarn \kappa_p}  \frac{n^{r_0}}{M^{{\modarn (q-q')/q}}} \le {\revarn \kappa_p} n^{-r}.
\end{equation}
We now focus on the the first term in the right hand side of \eqref{eq : splitting lemma 1}. Using that the $L^p$ norm of $g_n({\revar B_n})$ and
	$g'_n({\revar B_n'})$ is upper bounded by {\revarn $n^{r_0}$,} that $\abs{Z^{(M)}}\le M$ and \eqref{eq: proba omega}, we have
\begin{equation} \label{eq : esp cond reduit Omega}
\abs{\E\left[ {\revar (} g_n({\revar B_n})-g'_n({\revar B'_n}) {\revar )} Z^{(M)}	\right]}
\le 
\abs{\E\left[(g_n({\revar B_n})-g_n'({\revar B'_n})) Z^{(M)} \Indi{\Omega_n}	
	\right]}
+ O(n^{{\revarn r_0}}Mn^{-r'(1-1/p)}),
\end{equation}
where $r'>0$ can be chosen arbitrarily large.
On $\Omega_n$, we have $Z^{(M)}=h({\revar B_n},{\revar B'_n})\Indi{|h({\revar B_n},{\revar B'_n})|\le M}
=h({\revar B_n},{\revar B_n})\Indi{|h({\revar B_n},{\revar B_n})|\le M}$, and it follows
\begin{align*}
	\E\left[g_n({\revar B_n})Z^{(M)}\Indi{\Omega_n}\right]
	&=
	\E\left[g_n({\revar B_n})h({\revar B_n},{\revar B_n})\Indi{|h({\revar B_n},{\revar B_n})|\le M}\Indi{\Omega_n}\right]
	\\
	&=\E\left[g_n({\revar B_n})h({\revar B_n},{\revar B_n})\Indi{|h({\revar B_n},{\revar B_n})|\le M}\right]
	+O(n^{{\revarn r_0}}Mn^{-r'(1-1/p)})
	\\
	&=\E\left[A_nh({\revar B_n},{\revar B_n})\Indi{|h({\revar B_n},{\revar B_n})|\le M}\right]+O(n^{{\revarn r_0}}Mn^{-r'(1-1/p)})
\end{align*}
	where in the last line we used $g_n({\revar B_n})=\E[{\revar A_n} \mid {\revar B_n}]$. In an {\modarn analogous} way, we have 	$\E\left[g'_n({\revar B'_n})Z^{(M)}\Indi{\Omega_n}\right]=
	\E\left[A'_nh({\revar B'_n},{\revar B'_n})\Indi{|h({\revar B'_n},{\revar B'_n})|\le M}\right]+O(n^{{\revarn r_0}}Mn^{-r'(1-1/p)})$.
	Now, we deduce from \eqref{eq : esp cond reduit Omega},
\begin{multline*}
\abs{\E\left[   {\revar (} g_n({\revar B_n})-g'_n({\revar B'_n}) {\revar )} Z^{(M)}	\right]}
\le \abs{\E\left[A_nh({\revar B_n},{\revar B_n})\Indi{|h({\revar B_n},{\revar B_n})|\le M}\right]-\E\left[A'_nh({\revar B'_n},{\revar B'_n})\Indi{|h({\revar B'_n},{\revar B'_n})|\le M}\right]}
\\
+O(n^{{\revarn r_0}}Mn^{-r'(1-1/p)}).
\end{multline*}
As $({\revar A_n},{\revar B_n})=({\revar A'_n},{\revar B'_n})$ on $\Omega_n$, it implies
\begin{multline} \label{eq : diff est cond ZM}
	\abs{\E\left[
		{\revar (}
		g_n({\revar B_n})-g'_n({\revar B'_n}) {\revar )}Z^{(M)}	\right]}
	\le \abs{\E\left[A_nh({\revar B_n},{\revar B_n})\Indi{|h({\revar B_n},{\revar B_n})|\le M}\Indi{\Omega_n^c}\right]-\E\left[A'_nh({\revar B'_n},{\revar B'_n})\Indi{|h({\revar B'_n},{\revar B'_n})|\le M}\Indi{\Omega_n^c}\right]}
	\\
	+O(n^{{\revarn r_0}}Mn^{-r'(1-1/p)}) = O(n^{{\revarn r_0}}Mn^{-r'(1-1/p)}). 
\end{multline}
	Collecting \eqref{eq : lemma 1 duality}, \eqref{eq : splitting lemma 1}, \eqref{eq : diff est cond Z indi M} and \eqref{eq : diff est cond ZM} we deduce that 
	\begin{equation*}
		\E\left[\abs{g_n({\revar B_n})-g'_n({\revar B'_n})}^{p'}\right]^{1/p'}
		\le C n^{{\revarn r_0}}Mn^{-r'(1-1/p)}=Cn^{{\revarn r_0}}n^{{\modarn \frac{(r_0+r)q}{(q-q')}}}n^{-r'(1-1/p)} \le C n^{-r},
	\end{equation*}
	as we can choose $r'$ arbitrarily large. The lemma is shown.
\end{proof}
{\revar 
\subsection{Proof of Lemma \ref{L: maj grossiere Malliavin}}
We need to prove that $\sup_{x_0\in\mathbb{R}} \E[|W_{x_0,\Delta_n,\epsilon}|^4] = O(\Delta_n^{-4})$. 	Using the expression \eqref{eq: Malliavin weight} and the fact that the operator $\delta : \mathbb{D}^{1,p}(H) \to L^p$ is bounded (see Proposition 1.5.8 in Nualart 
	\cite{Nualart}), it is sufficient to prove that $u \in \mathbb{D}^{1,4}(H)$ and $\norm{u}_{\mathbb{D}^{1,4}(H)} \le c \Delta_n^{-1}$ with 
	$u_t=\frac{D_t X^\epsilon_{\Delta_n} \dot{X}^\epsilon_{\Delta_n} }{\bracket{DX^\epsilon_\cdot}{DX^\epsilon_\cdot}}$. 
	Now, using the {\modr Leibniz} rule for the Malliavin derivative and H\"older inequality, it is possible to extend the Proposition 1.5.6. in Nualart \cite{Nualart} and get
	$\norm{u}_{\mathbb{D}^{1,4}(H)}  \le c \norm{D_\cdot X^\epsilon_{\Delta_n}}_{\mathbb{D}^{1,16}(H)} 
	\norm{\dot{X}^\epsilon_{\Delta_n} }_{\mathbb{D}^{1,16}(\R)} \norm{\frac{1}{\bracket{DX^\epsilon_\cdot}{DX^\epsilon_\cdot}}}_{\mathbb{D}^{1,8}(\R)}$. It remains to bound the three norms in the right hand side of the last equation. 
	
	By recalling \eqref{eq: model epsilon} and \eqref{eq: X dot avec drift}, we remark that the process $t \mapsto (X_t^\epsilon,\dot{X}_t^\epsilon)$ is solution of the {\modr SDE}
\begin{equation*}
	\left\{
	\begin{aligned}
			X_t^\epsilon
			&=x_0+\int_0^t	b(X_s^\epsilon)	ds+ \int_0^t a_\epsilon(X_s^\epsilon) dB_s
			\\
			\dot{X}_t^\epsilon &=\int_0^t b'(X_s^\epsilon)\dot{X}_s^\epsilon ds +
			\int_0^t [\dot{a}_\epsilon(X_s^\epsilon)+ a'_\epsilon(X_s^\epsilon) \dot{X}_s^\epsilon]dB_s	
	\end{aligned}	
	\right.	
\end{equation*}
where $a_\epsilon(x)=1+\frac{\epsilon}{M_n}\psi_{h_n}(x)$ and $\dot{a}_\epsilon(x)=\frac{1}{M_n}\psi_{h_n}(x)$.
Since $\norm{a_\epsilon'}_\infty \le \frac{c}{M_n h_n}$, $\norm{a_\epsilon''}_\infty \le \frac{c}{M_n h_n^2}$, $\frac{1}{M_n h_n^2}=O(h_n^{\beta-2})=O(1)$ using $\beta \ge 3$, and the definition of $b$ given in \eqref{eq: def drift}, we see that the coefficients in the {\modr SDE} satisfied by $(X_t^\epsilon)_t$ are bounded, together with their first and second order derivatives. By Theorem 2.2.2 in \cite{Nualart}, this implies that the Malliavin derivatives up to order 2 of $X^\epsilon_t$ are bounded (see \eqref{eq: control Malliavin SDE} in Appendix \ref{s: Malliavin sub section recall}).
It yields 
$ \sup_{r\in[0,\Delta_n]} \E[\sup_{t\in[0,\Delta_n]}
\abs{D_rX_t^\epsilon}^p ] \le c(p)$,   and $ \sup_{r,r'\in[0,\Delta_n]} \E[\sup_{t\in[0,\Delta_n]}
\abs{D^2_{r,r'} X_t^\epsilon}^p ] \le c(p)$ for all $p \ge 2$, where the constant $c(p)$ does not depend on $\epsilon,n$.
To get a control on the Malliavin derivative of $\dot{X}^\epsilon_t$, we use  Theorem 2.2.1 in \cite{Nualart}, 
to obtain that the Malliavin derivatives of $(X_t^\epsilon,\dot{X}_t^\epsilon)$ are solution of the following {\modr SDE}, where $0\le  r \le t \le \Delta_n$,
\begin{multline}\label{eq: SDE DX DX dot with b}
	\begin{bmatrix}
		D_rX_t^\epsilon	\\
		D_r \dot{X}_t^\epsilon		
	\end{bmatrix}=
\begin{bmatrix}
		a_\epsilon(X^\epsilon_r)\\
		A_{\epsilon}(X_s^\epsilon, \dot{X}_s^\epsilon) 
\end{bmatrix}
+
	\int_r^t 
	\begin{bmatrix}
	 b'(X_s^\epsilon) D_r X_s^\epsilon  	
		\\
		\partial_x B(X_s^\epsilon, \dot{X}_s^\epsilon) D_r X_s^\epsilon + \partial_v B(X_s^\epsilon, \dot{X}_s^\epsilon) D_r \dot{X}_s^\epsilon  	ds
	\end{bmatrix}
	ds
	\\
	+
	\int_r^t
	\begin{bmatrix}
	 a'_{\epsilon}(X_s^\epsilon) D_r X_s^\epsilon  	
		\\
		\partial_x A_{\epsilon}(X_s^\epsilon, \dot{X}_s^\epsilon) D_r X_s^\epsilon + \partial_v A_{\epsilon}(X_s^\epsilon, \dot{X}_s^\epsilon) D_r \dot{X}_s^\epsilon  	ds
	\end{bmatrix}
	dB_s,
\end{multline}
with  $B(x,v)=b'(x)v$ and  $A_{\epsilon}(x,v)=\dot{a}_\epsilon(x)+ a'_\epsilon(x) v=\frac{1}{M_n}\psi_{h_n}(x)+\frac{\epsilon}{M_n}\psi'_{h_n}(x)v$.
Using $\frac{1}{M_n h_n^2}=O( h_n^{\beta - 2})=O(1)$, we have 
	that $\norm{\partial_v A_{\epsilon}}_\infty +  \norm{\partial_v B}_\infty \le c$ and $|\partial_x A_{\epsilon}(x,v)| + |\partial_x B(x,v)| \le c(1+|v|)$ for some constant $c$ independent of $\epsilon,n$. 
We apply Lemma 2.2.1 in \cite{Nualart} on the second component of the {\modr SDE} \eqref{eq: SDE DX DX dot with b} and deduce
	$\sup_{r\in[0,\Delta_n]} \E[\sup_{t\in[0,\Delta_n]}
	\abs{D_r\dot{X}_t^\epsilon}^p ] \le c(p)$. 
It is sufficient to infer that
$ \norm{D_\cdot X^\epsilon_{\Delta_n}}_{\mathbb{D}^{1,16}(H)} \le c$; $ \norm{X^\epsilon_{\Delta_n}}_{\mathbb{D}^{2,16}(\R)}\le c$ and $\norm{\dot{X}^\epsilon_{\Delta_n} }_{\mathbb{D}^{1,16}(\R)}\le c$. 

It remains to prove that $\norm{\frac{1}{\bracket{DX^\epsilon_\cdot}{DX^\epsilon_\cdot}}}_{\mathbb{D}^{1,8}(\R)}=O(\Delta_n^{-1})$.
Using Proposition 2.1 in \cite{Nualart04}, we write the explicit representation for $D_rX_{\Delta_n}$ available in the univariate case,
\begin{equation}\label{eq : representation DX explicit}
D_r{X}_{\Delta_n}^\epsilon=a_\epsilon(X_{\Delta_n}^\epsilon)\exp\left( \int_r^{\Delta_n} [b'(X^\epsilon_u)-\frac{a'_\epsilon}{a_\epsilon}b(X^\epsilon_u)-\frac{1}{2}a''_\epsilon a_\epsilon(X^\epsilon_u)]du\right), \text{ where $a_\epsilon=1+\frac{\epsilon}{M_n} \psi_{h_n}$.}
\end{equation}
 Using the boundedness of $1/(M_nh_n^2)$ and $1/a_\epsilon$ we deduce that  $1/c \le |D_r{X}^\epsilon_{\Delta_n}| \le c$ for some constant $c$. In turn, 
 $\bracket{D_\cdot X^\epsilon_{\Delta_n}}{D_\cdot  X^\epsilon_{\Delta_n}} =\int_0^{\Delta_n} |D_r{X}_{\Delta_n}^\epsilon|^2 dr \ge c \Delta_n$ for some $c>0$.
 By the chain rule property for the Malliavin derivative, see Proposition 1.2.3 in \cite{Nualart}, we have $D(\frac{1}{\bracket{D_\cdot X^\epsilon_{\Delta_n}}{D_\cdot  X^\epsilon_{\Delta_n}}})=-\frac{D (\bracket{D_\cdot X^\epsilon_{\Delta_n}}{D_\cdot  X^\epsilon_{\Delta_n}})}{\bracket{D_\cdot X^\epsilon_{\Delta_n}}{D_\cdot  X^\epsilon_{\Delta_n}}}$. Therefore, $\norm{\frac{1}{\bracket{D_\cdot X^\epsilon_{\Delta_n}}{D_\cdot  X^\epsilon_{\Delta_n}}}}_{\mathbb{D}^{1,8}(\mathbb{R})} \le 
 \frac{c}{\Delta_n^2} \norm{\bracket{D_\cdot X^\epsilon_{\Delta_n}}{D_\cdot  X^\epsilon_{\Delta_n}}}_{\mathbb{D}^{1,8}(\mathbb{R})}$. We write
 \begin{equation*}
  D_u(\bracket{D_\cdot X^\epsilon_{\Delta_n}}{D_\cdot  X^\epsilon_{\Delta_n}})=2\int_0^{\Delta_n} D^2_{u,r}X^\epsilon_{\Delta_n} D_{r}X^\epsilon_{\Delta_n} dr	
 \end{equation*}
 and use that  $|D_{r}X^\epsilon_{\Delta_n}| \le  c$, by the representation \eqref{eq : representation DX explicit}, to deduce 
 \begin{equation*}
 	\int_0^{\Delta_n}
  D_u(\bracket{D_\cdot X^\epsilon_{\Delta_n}}{D_\cdot  X^\epsilon_{\Delta_n}})^2du \le c \Delta_n \int_0^{\Delta_n}\int_0^{\Delta_n}
  |D^2_{u,s}X^\epsilon_{\Delta_n} |^2duds.
\end{equation*}
It entails $\norm{\bracket{D_\cdot X^\epsilon_{\Delta_n}}{D_\cdot  X^\epsilon_{\Delta_n}}}_{\mathbb{D}^{1,8}(\mathbb{R})} \le c \Delta_n
\norm{X^\epsilon_{\Delta_n}}_{\mathbb{D}^{2,8}(\mathbb{R})}=O(\Delta_n)$ and in turn 
 $\norm{\frac{1}{\bracket{D_\cdot X^\epsilon_{\Delta_n}}{D_\cdot  X^\epsilon_{\Delta_n}}}}_{\mathbb{D}^{1,8}(\mathbb{R})}=O(1/\Delta_n)$. The lemma is proved.
\qed
}
{\modc
\subsection{Proof of Lemma \ref{l: bound stationarity}}
\begin{proof}
The proof of the three points relies on Lemma \ref{l: AB} and on the stationarity of the process $(X_s^\epsilon)_{s \ge 0}$. We recall that on $\Tilde{\Omega}_n$ it is ${\revnotat \widehat{X}}^\epsilon = X^\epsilon$ $\forall \epsilon > 0$ while on the {\revar complement} the following bound holds: $\mathbb{P}(\Omega_n^c)= o(n^{- r})$. Hence, using also the boundedness of $\psi_{h_n}$ we have for any $p \ge 1$
\begin{align*}
&{\modar \sup_{0<s\le\Delta_n}}\int_{\R} \E_{x_0}[|\psi_{h_n}({\revnotat \widehat{X}}_{{\modar s}}^\epsilon)|^p] {\modarn \pi^\epsilon}(x_0) dx_0 \\
&  \le
{\modar \sup_{0<s\le\Delta_n}}
\int_{\R} \E_{x_0}[|\psi_{h_n}({X_{{\modar s}}^\epsilon})|^p] {\modarn \pi^\epsilon}(x_0) dx_0 + o(n^{-r}) \\
& =
{\modar \sup_{0<s\le\Delta_n}}{\modarn \E_{\pi^\epsilon}}[|\psi_{h_n}({X_{{\modar s}}^\epsilon})|^p] + o(n^{-r}).
\end{align*}
Then from the stationarity of the process ${\modarn X^\epsilon}$ it is, for any $p \ge 1$, 
\begin{align*}
{\modar \forall s>0,}\quad
{\modarn \E_{\pi^\epsilon}}[|\psi_{h_n}({X_{{\modar s}}^\epsilon})|^p] = \int_{-h_n}^{h_n} (\psi_{h_n}(y))^p {\modarn \pi^\epsilon}(y) dy \le c h_n.
\end{align*}
We act similarly in order to show the second point of the lemma. We remark we can use Lemma \ref{l: AB} as both the $L^{p'}$ norms of $|\psi_{h_n}^{(k)}({\revnotat \widehat{X}}_{{\modar s}}^\epsilon)|^p$ and $|\psi_{h_n}^{(k)}({X_{{\modar s}}^\epsilon})|^p$ are upper bounded by $h_n^{-kp} = n^{r_0}$ for some $r_0$ and some $p' > 1$. Hence, for $p\ge 1$ and $k \ge 1$ we have 
\begin{equation*}
\int_{\R} \E_{x_0}[|\psi_{h_n}^{(k)}({\revnotat \widehat{X}}_{{\modar s}}^\epsilon)|^p] {\modarn \pi^\epsilon}(x_0) dx_0 \\
 \le {\modarn \E_{\pi^\epsilon}}[|\psi_{h_n}^{(k)}({X_{{\modar s}}^\epsilon})|^p] + o( n^{-r}).
\end{equation*}
Regarding the first term we observe it is
\begin{align*}
{\modarn \E_{\pi^\epsilon}}[|\psi_{h_n}^{(k)}({X_{{\modar s}}^\epsilon})|^p] = \int_{-h_n}^{h_n} |\psi_{h_n}^{(k)}(y)|^p {\modarn \pi^\epsilon}(y) dy \le c  h_n^{1 - kp},
\end{align*}
while the second is negligible, up to choose an $r$ which is large enough. \\
We are left to show the third point of the lemma. The idea is once again to move back to the stationary process. To do that, Lemma \ref{l: AB} comes in handy one more time. Its applicability is ensured by the boundedness of $\psi_{h_n}$ and the fact that both ${\revnotat \widehat{X}}_{u}^\epsilon - {\revnotat \widehat{X}}_{0}^\epsilon$ and ${X_{u}^\epsilon} - {X_{0}^\epsilon}$ have bounded moments of any order. Then, 
\begin{align*}
&\int_0^{\Delta_n} \int_{\R} \E_{x_0}[({\revnotat \widehat{X}}_{u}^\epsilon - {\revnotat \widehat{X}}_{0}^\epsilon)^2 \psi_{h_n}^2({\revnotat \widehat{X}}_{u}^\epsilon)] du \, \pi (x_0) d{x_0} \\
& \le \int_0^{\Delta_n} \int_{\R} \E_{x_0}[({X}_{u}^\epsilon - {X}_{0}^\epsilon)^2 \psi_{h_n}^2({X}_{u}^\epsilon)] du \, \pi (x_0) d{x_0} + o(n^{- r}) \\
& = \int_0^{\Delta_n}  {\modarn \E_{\pi^\epsilon}}[({X}_{u}^\epsilon - {X}_{0}^\epsilon)^2 \psi_{h_n}^2({X}_{u}^\epsilon)] du + o(n^{- r}) \\
&  = \int_0^{\Delta_n}  {\modarn \E_{\pi^\epsilon}}[({X}_{0}^\epsilon - {X}_{u}^\epsilon)^2 \psi_{h_n}^2({X}_{0}^\epsilon)] du + o(n^{- r}), 
\end{align*}
where we have used that the diffusion is reversible. Introducing the conditional expectation with respect to $X_0^\epsilon$ we obtain that the integral here above is equal to
\begin{align*}
&\int_0^{\Delta_n}  {\modarn \E_{\pi^\epsilon}}[\E[({X}_{0}^\epsilon - {X}_{u}^\epsilon)^2| {X}_{0}^\epsilon] \psi_{h_n}^2({X}_{0}^\epsilon)] du + o(n^{- r}) \\
& \le c \int_0^{\Delta_n}  {\modarn \E_{\pi^\epsilon}}[u \, \psi_{h_n}^2({X}_{0}^\epsilon)] du + o(n^{- r}) \\
& \le c \Delta_n^2 h_n + o(n^{- r}), 
\end{align*}
as we wanted.

\end{proof}
}

\subsection{Proof of Proposition \ref{p: bound W1}}
In order to get an expansion for ${\revnotat \widehat{W}}^1_{x_0,\Delta_n,\epsilon}$ we need asymptotic controls on the Malliavin derivatives of the process ${\revnotat \widehat{X}}$. It is the purpose of the next Section to collect some properties on $D{\revnotat \widehat{X}}$, that will be useful for the proof of Proposition \ref{p: bound W1} and Lemmas \ref{l: bound moments Xdot}--\ref{L: suppressing dB in principal term}. 

\subsubsection{Controls on $D{\revnotat \widehat{X}}$}
First, we focus on $Y^\epsilon$ given explicitly by \eqref{eq: DD exp}. Let us define
\begin{equation} \label{eq: def I ronde}
	\mathcal{I}_t=\int_0^t \psi_{h_n}'({\revnotat \widehat{X}}_s^\epsilon)dB_s=\int_0^t \frac{\psi_{h_n}'({\revnotat \widehat{X}}_s^\epsilon)}{a_\epsilon ({\revnotat \widehat{X}}_s^\epsilon) } a_\epsilon ({\revnotat \widehat{X}}_s^\epsilon) dB_s,
\end{equation}	
where we recall that $a_\epsilon ({\revnotat \widehat{X}}_s^\epsilon)$ is the volatility of the process ${\revnotat \widehat{X}}^\epsilon$ appearing in \eqref{eq: model epsilon}, i.e. $a_\epsilon (X_s^\epsilon) = 1 + \frac{\epsilon}{M_n} \psi_{h_n}({\revnotat \widehat{X}}_s^\epsilon)$. We denote as $\Xi$ a primitive of the function $\frac{\psi_{h_n}'(y)}{a_\epsilon (y) }$ which is null at $0$. From Ito formula it follows
\begin{align}{\label{eq: ito}}
{\modarn \mathcal{I}_t= }	\int_0^t \psi_{h_n}'({\revnotat \widehat{X}}_s^\epsilon)dB_s = \Xi({\modar {\revnotat \widehat{X}}^\epsilon_t}) - \Xi({\modar {\revnotat \widehat{X}}^\epsilon_0}) - \frac{1}{2} \int_0^t (\frac{\psi_{h_n}'}{a_\epsilon })' ({\revnotat \widehat{X}}_s^\epsilon) a_\epsilon^2({\revnotat \widehat{X}}_s^\epsilon) ds.
\end{align}
We now observe that
\begin{align*}
	|\Xi(u)| & = |\int_0^u \frac{\psi_{h_n}'(y)}{(1 + \frac{\epsilon}{M_n} \psi_{h_n}  (y)) } dy| \\
	& \le \int_0^u |\psi_{h_n}'(y)| dy \\
	& = \int_0^u \frac{1}{h_n} |\psi'(\frac{y}{h_n})| dy \\
	& \le \int_{\mathbb{R}}|\psi'(\tilde{y})| d\tilde{y} < \infty.
\end{align*}
In order to bound the last term in the right hand side of \eqref{eq: ito} we remark the following estimations hold true:
\begin{equation}
	|\psi_{h_n}'(y)| \le \frac{c}{h_n}, \qquad |\psi_{h_n}''(y)| \le \frac{c}{h_n^2}, \qquad |a_\epsilon'(y)| \le \frac{1}{h_n M_n}.
	\label{eq: bound psi and a}
\end{equation}
It follows, using also the fact that $Y^\epsilon$ is explicit as in \eqref{eq: DD exp}
{\modarn 
	\begin{equation}\label{E : maj Y 3 terms}	
	Y_t^\epsilon \le \exp(\frac{c}{M_n} + \frac{c \, t}{h_n^2 M_n} + \frac{c \, t}{h_n^2 M_n^2}).
	\end{equation}
}
In the same way we have an analogous upper bound for $(Y_t^\epsilon)^{- 1}$, which provides 
\begin{equation}
	\sup_{t \in [0, \Delta]} |Y_t^\epsilon| + |Y_t^\epsilon|^{-1} \le 2 \exp(\frac{c \, \Delta_n}{h_n^2 M_n}) < \infty,
	\label{eq: Y bounded}
\end{equation}
{\modarn where we used $h_n \to 0$, $M_n \to \infty$ and that {\revar $h_n \le \Delta_n$,} 
	{\revar so that} 
	in consequence
$\frac{\Delta_n}{h_n^2 M_n}$ dominates the three terms in the exponential of \eqref{E : maj Y 3 terms}.}

Following the same reasoning we have used in order to get \eqref{eq: Y bounded} and having in mind the explicit expression for $Y^\epsilon$ and $(Y^\epsilon)^{- 1}$ {\revar given by \eqref{eq: DD exp} } it is easy to see that 
{\modar 
	\begin{align} \label{eq: control Y}
		|Y_{\Delta_n}^\epsilon - 1| &\le \frac{c \Delta_n}{M_n h_n^2},
		\\ \label{eq: control Y inv}
		\sup_{u \in [0, \Delta_n]}|(Y_u^\epsilon)^{-1}- 1| &\le \frac{c \Delta_n}{M_n h_n^2}.
	\end{align}
}
From \eqref{eq: DsXt}, we deduce that  {\modar for all $0<u<s<\Delta_n$,}
{\modar\begin{equation}\label{eq: majo DX}
{\modarn 0< ~}	c \le  c(1 - \frac{1}{h_n M_n}) \le	|D_u {\revnotat \widehat{X}}_s^\epsilon| \le c'(1 + \frac{1}{h_n M_n}) \le c'.
\end{equation}}
In turn, we have {\modarn simple bounds on} the Malliavin {\revar bracket, 
 $\forall r\in (0,\Delta_n]$,}
	\begin{equation}\label{eq: mino bracket}
		{\revar
		c r  \le \bracket{D_\cdot {\revnotat \widehat{X}}_{r}^\epsilon}{D_\cdot {\revnotat \widehat{X}}_{r}^\epsilon}
				\le c' r .}
	\end{equation}

{\modar \subsubsection{Proof of \eqref{eq: control Rn integrated}}
\begin{proof}
{\revar 	Let us denote  
		\begin{equation} \label{eq : def grand Phi}
		\Phi_{\Delta_n}=\sqrt{\Delta_n}\delta\left(
	\frac{D_.{\revnotat \widehat{X}}^\epsilon_{\Delta_n}}{\bracket{D_.{\revnotat \widehat{X}}^\epsilon_{\Delta_n}}{D_.{\revnotat \widehat{X}}^\epsilon_{\Delta_n}}}\right)	.
\end{equation}}	
	$\bullet$ We start by proving for all $p \ge 2$
	\begin{equation} \label{eq: control Phi B}
		\sup_{x_0\in\mathbb{R}}	\E_{x_0}\left[\abs{\Phi_\Delta-\frac{B_{\Delta_n}}{\sqrt{\Delta_n}}}^p \right]
		\le \frac{c(p)}{(M_n h_n^2)^p}.
	\end{equation}
{\revar Using Proposition 1.3.3. in Nualart \cite{Nualart} with the notation 
 $L=-\delta \circ D$ for the so-called Ornstein-Uhlenbeck operator,}
we have
	\begin{equation}
		\label{eq: Phi second express}
		\Phi_{\Delta_n}=\frac{-\Delta_n^{1/2}L({\revnotat \widehat{X}}_{\Delta_n}^\epsilon)}{\bracket{D_.{\revnotat \widehat{X}}^{\epsilon}_{\Delta_n}}{D_.{\revnotat \widehat{X}}^{\epsilon}_{\Delta_n}}}+
		\frac{\Delta_n^{1/2} \bracket{ D_.\left(\bracket{D_.{\revnotat \widehat{X}}^{\epsilon}_{\Delta_n}}{D_.{\revnotat \widehat{X}}^{\epsilon}_{\Delta_n}}\right)}{D_.{\revnotat \widehat{X}}^{\epsilon}_{\Delta_n}}}{\norm{D_.{\revnotat \widehat{X}}^{\epsilon}_{\Delta_n}}_H^4}{\revar .}
	\end{equation}
Application of the {\revar linear} operator $L$ to the dynamic {\modarn \eqref{eq: SDE sans drift},
	 recalling $a_\epsilon=1+\epsilon\frac{\psi_{h_n}}{M_n}$,}	
	 yields to
	\begin{equation}\label{eq: linear SDE L}
		L({\revnotat \widehat{X}}_s^\epsilon)=\int_0^s \{a'_\epsilon({\revnotat \widehat{X}}^\epsilon_r) L({\revnotat \widehat{X}}^\epsilon_r) + a''_\epsilon({\revnotat \widehat{X}}_r) \bracket{D_.{\revnotat \widehat{X}}^\epsilon_r}{D_.{\revnotat \widehat{X}}^\epsilon_r} \}d B_r
		- \int_0^s a_\epsilon({\revnotat \widehat{X}}_r^{\epsilon}) d B_r,
	\end{equation}
	where we used the property $L(\int_0^s u_r dB_r)=\int_0^s L(u_r) dB_r - \int_0^s u_r dB_r$ for $(u_r)_r$ an adapted process taking values in the space of smooth random variables in the Malliavin sense {\revar together with Proposition 1.4.5 in \cite{Nualart}. A rigorous justification of \eqref{eq: linear SDE L} is given by Theorem 10.3 of \cite{Bichteler_et_al87}.}
	{\modarn We deduce that for $p\ge 2$,
	\begin{align*}
		\E_{x_0}[|L({\revnotat \widehat{X}}^\epsilon_s)|^p]
		\le c_p \norm{a'_\epsilon}_\infty^p s^{p/2-1}&\int_0^s \E_{x_0}[|L({\revnotat \widehat{X}}^\epsilon_r)|^p dr 
		\\ &+
		c_p \norm{a''_\epsilon}_\infty^p s^{p/2-1}\int_0^s \E_{x_0}[\norm{D_.{\revnotat \widehat{X}}_r^\epsilon}_H^{2p}
		]dr
		+
		c_p \norm{a''_\epsilon}_\infty^p s^{p/2}.
	\end{align*}
Remark that $\norm{a_\epsilon}_\infty \le c$, 
$\norm{a'_\epsilon}_\infty \le C/(M_nh_n) \le C$ and 
$\norm{a''_\epsilon}_\infty \le C/(M_nh_n^2) \le C$, as {\modrev $1/M_n = \alpha_0 h_n^\beta$} with $\beta\ge3$.}
	Using {\modarn \eqref{eq: mino bracket},} 
	{\modarn$	 \E_{x_0}[|L({\revnotat \widehat{X}}^\epsilon_s)|^p]
	\le c_p  s^{p/2-1}\int_0^s \E_{x_0}[|L({\revnotat \widehat{X}}^\epsilon_r)|^p]dr  + c_p s^{p/2}$} and thus, by Gronwall inequality,
	$$
	\E_{x_0}[|L({\revnotat \widehat{X}}^\epsilon_s)|^p] \le c_p s^{p/2}.
	$$
	From \eqref{eq: linear SDE L} and the expression of $a_\epsilon$ we deduce, using Burkholder--Davis--Gundy inequality
	\begin{align*}	
		{\modarn \E_{x_0}} \left[ \abs{L({\revnotat \widehat{X}}^\epsilon_{\Delta_n})+B_{\Delta_n}}^p \right]
		&\le c
		{\modarn \E_{x_0}} \left[ \abs{\int_0^{\Delta_n} a'_\varepsilon({\revnotat \widehat{X}}_r^{\epsilon}) L({\revnotat \widehat{X}}^\epsilon_r) dB_r}^p
		\right] + \\
		&\qquad\qquad\qquad
		c {\modarn \E_{x_0}} \left[ \abs{\int_0^{\Delta_n} a''_\varepsilon({\revnotat \widehat{X}}_r^{\epsilon}) \bracket{D_.{\revnotat \widehat{X}}_r^\epsilon}{D_.{\revnotat \widehat{X}}_r^\epsilon} dB_r}^p
		\right] 
		+
		c {\modarn \E_{x_0}} \left[ \abs{ \int_0^{\Delta_n} 
			\frac{\epsilon}{M_n} \psi({\revnotat \widehat{X}}^\epsilon_r) dB_r}^p
		\right]
		\\
		&\le c \norm{{\modarn a'_\epsilon}}_\infty^p \Delta_n^{p/2-1} \int_0^{\Delta_n}
		{\modarn \E_{x_0}}[|L({\revnotat \widehat{X}}^\epsilon_r)|^p] dr +
		\\
		&\qquad\qquad\qquad
		c \norm{{\modarn a''_\epsilon}}_\infty^p \Delta_n^{p/2-1} \int_0^{\Delta_n}
		{\modarn \E_{x_0}}[
		\norm{D_.{\revnotat \widehat{X}}_r^\epsilon}_H^{2p}
		] 
		dr
		+ c\frac{\norm{\psi}^p_\infty}{M_n^p} \Delta_n^{p/2}
		\\
		&\le c \left( \frac{\Delta_n^p}{M_n^p h_n^p} +
		\frac{\Delta_n^{3p/2}}{M_n^p h_n^{2p}}+ \frac{\Delta_n^{p/2}}{M_n^p} \right)
		{\modarn \le
		c \left( \frac{\Delta_n^p}{M_n^p h_n^p} +
		\frac{\Delta_n^{3p/2}}{M_n^p h_n^{2p}}\right),}
	\end{align*}
{\modarn where in the last line we used $h_n\le\Delta_n$.}
	From \eqref{eq: mino bracket}, we deduce
	\begin{equation*} 
		{\modarn\E_{x_0}} \left[ \abs{\frac{L({\revnotat \widehat{X}}^\epsilon_{\Delta_n})+B_{\Delta_n}}{
				\bracket{D_. {\revnotat \widehat{X}}^\epsilon_{\Delta_n}}{D_. {\revnotat \widehat{X}}^\epsilon_{\Delta_n}}  }}^p \right] \le c
		\left( \frac{1}{M_n^p h_n^p} +
		\frac{\Delta_n^{p/2}}{M_n^p h_n^{2p}} \right).
	\end{equation*}
	Using that from \eqref{eq: control Y}--\eqref{eq: control Y inv} we have,
	{\modarn 
		\begin{equation} \label{eq: bracket moins 1}
	\abs{ \frac{\Delta_n}{\bracket{D_. {\revnotat \widehat{X}}_{\Delta_n}^{\epsilon}}{D_. {\revnotat \widehat{X}}_{\Delta_n}^{\epsilon}}} -1}\le c\frac{\Delta_n}{M_n h_n^2}, 
	\end{equation}}
	it is deduced
	\begin{equation}\label{eq: control Phi B part 1}
		{\modarn \E_{x_0}} \left[ \abs{
			\frac{\sqrt{\Delta_n}L({\revnotat \widehat{X}}^\epsilon_{\Delta_n})}{
				\bracket{D_. {\revnotat \widehat{X}}^\epsilon_{\Delta_n}}{D_. {\revnotat \widehat{X}}^\epsilon_{\Delta_n}}}  
			+ \frac{B_{\Delta_n}}{\sqrt{\Delta_n}}
		}^p \right] \le c
		\left( \frac{\Delta_n^{p/2}}{M_n^p h_n^p} +
		\frac{\Delta_n^{p}}{M_n^p h_n^{2p}} \right).
	\end{equation}
	
	We consider now the second term of \eqref{eq: Phi second express}.  From \eqref{eq: SDE sans drift} {\revar and Theorem 2.2.2. in Nualart \cite{Nualart},} we derive the dynamics of the second Malliavin derivative of $X^\epsilon$, 
	\begin{equation*}
		D_{s_1,s_2}{\revnotat \widehat{X}}^\epsilon_t=\int_{s_1 \vee s_2}^t \{a''_\epsilon D_{s_1}{\revnotat \widehat{X}}^\epsilon_r
		D_{s_2}{\revnotat \widehat{X}}^\epsilon_r + a'_\epsilon({\revnotat \widehat{X}}_u)D_{s_1,s_2}{\revnotat \widehat{X}}^\epsilon_r
		\} dB_r \\+ a'_\epsilon({\revnotat \widehat{X}}_{s_2}^\epsilon) D_{s_1}{\revnotat \widehat{X}}_{s_2}^\epsilon
		+ a'_\epsilon({\revnotat \widehat{X}}_{s_1}^\epsilon) D_{s_2}{\revnotat \widehat{X}}_{s_1}^\epsilon,
	\end{equation*}
	for $s_1 \vee s_2<t$. As a consequence, we have $\sup_{s_1\vee s_2\le  \Delta_n}{\modarn \E_{x_0}}\left[|D_{s_1,s_2}{\revnotat \widehat{X}}^\epsilon_{\Delta_n}|^p\right] \le c \norm{a_\epsilon'}_{\infty}^p \le c/(M_n h_n)^p$. Next, using Cauchy-Schwarz inequality for the Malliavin bracket, we have
	\begin{align*}
{\revar 	\frac{\Delta_n^{1/2}
			\abs{\bracket{D_.\left(\bracket{D_.{\revnotat \widehat{X}}^{\epsilon}_{\Delta_n}}{D_.{\revnotat \widehat{X}}^{\epsilon}_{\Delta_n}}\right)}{D_.{\revnotat \widehat{X}}^{\epsilon}_{\Delta_n}}}}
		{\norm{D_.{\revnotat \widehat{X}}^{\epsilon}_{\Delta_n}}^4_H}
	}
		&\le		
{\revar 		\frac{\Delta_n^{1/2}
			\norm{ D_.\left( \bracket{D_.{\revnotat \widehat{X}}^{\epsilon}_{\Delta_n}}{D_.{\revnotat \widehat{X}}^{\epsilon}_{\Delta_n}}\right)}_H} 
	{\norm{D_.{\revnotat \widehat{X}}^{\epsilon}_{\Delta_n}}_H^{3}}
}		
		\\
				&\le \frac{{\modarn c}}{\Delta^{1/2}_n}\left(\int_0^{\Delta_n} \int_0^{\Delta_n} 
		\abs{D_{s_1,s_2} {\revnotat \widehat{X}}^\epsilon_{\Delta_n}}^2\abs{D_{s_1} {\revnotat \widehat{X}}^\epsilon_{\Delta_n}}^2
		ds_1 ds_2\right)^{1/2},
	\end{align*} 
	where we used \eqref{eq: mino bracket} and Jensen inequality in the second line. Using \eqref{eq: majo DX} and the upper bound on the second order Malliavin derivative of $X^\epsilon_\Delta$ we deduce
	\begin{equation} \label{eq: control Phi B part 2}
		{\modarn \E_{x_0}}\left[\abs{\frac{\Delta_n^{1/2}
				\bracket{D_.\left(\bracket{D_.{\revnotat \widehat{X}}^{\epsilon}_{\Delta_n}}{D_.{\revnotat \widehat{X}}^{\epsilon}_{\Delta_n}}\right)}{D_.{\revnotat \widehat{X}}^{\epsilon}_{\Delta_n}}}
		{\norm{D_.{\revnotat \widehat{X}}^{\epsilon}_{\Delta_n}}_H^4}}^{p}
		\right]
		\le c\frac{\Delta_n^{p/2}}{(M_n h_n)^p} {\modarn .}
	\end{equation}
	{\revar Collecting  \eqref{eq: control Phi B part 1}--\eqref{eq: control Phi B part 2} with \eqref{eq: Phi second express}, we deduce \eqref{eq: control Phi B}.}  
	
	$\bullet$ We now prove \eqref{eq: control Rn integrated}. From
	\eqref{eq: split W} {\revar and \eqref{eq : def grand Phi}} we have
	\begin{equation*}
	R_n^{(1)}=\left( 
	\frac{{\revnotat \smash{\dot{\widehat{X}}}}^\epsilon_{\Delta_n}}{\Delta_n^{1/2}}
	- {\modarn \frac{1}{\Delta_n^{1/2}M_n}} \int_0^{\Delta_n}
	\psi_{h_n}({\revnotat \widehat{X}}^\epsilon_s) dB_s
	\right)\frac{B_{\Delta_n}}{\Delta_n^{1/2}}
	+
	\frac{{\revnotat \smash{\dot{\widehat{X}}}}^\epsilon_{\Delta_n}}{\Delta_n^{1/2}}\left(
	\Phi_{\Delta}-\frac{B_{\Delta_n}}{\Delta_n^{1/2}}\right)
{\modarn 	=:R_n^{(1,2)}+R_n^{{\revar (1,2)}}.}
	\end{equation*}
By Cauchy--Schwarz and \eqref{eq: def x dot} we have
\begin{align*}
	{\modarn \E_{x_0}}\left[|R_n^{{\modarn (1,1)}}|^2\right]
	&\le\Delta_n^{-1}{\modarn \E_{x_0}}\left[
	\left(\int_0^{\Delta_n} \frac{\epsilon}{M_n}{\modarn \psi'_{h_n}}({\revnotat \widehat{X}}_s^\epsilon) {\revnotat \smash{\dot{\widehat{X}}}}^\epsilon_s dB_s \right)^4 \right]^{1/2}	
\\	&\le
	\frac{1}{\Delta^{1/2}M_n^2}\left(
	\int_0^{\Delta_n} 
	{\modarn \E_{x_0}}\left[ |{\modarn \psi'_{h_n}}({\revnotat \widehat{X}}_s^\epsilon)|^4 |{\revnotat \smash{\dot{\widehat{X}}}}^\epsilon_s|^4 \right] ds  \right)^{1/2}	
\end{align*}
	Using that by Lemma \ref{l: bound moments Xdot} we have ${\modarn \E_{x_0}}[|{\revnotat \smash{\dot{\widehat{X}}}}_s^\epsilon|^4 ]\le c \Delta_n^2/M_n^4$ and
	$\norm{{\modarn \psi'_{h_n}}}_\infty \le c/(h_n M_n)$ we deduce,
	${\modarn \E_{x_0}}\left[|R_n^{{\modarn (1,1)}}|^2\right] \le c {\modarn \Delta_n}/(M_n^6h_n^4)  $.

Using Cauchy-Schwarz and \eqref{eq: control Phi B}, we have 	${\modarn \E_{x_0}}\left[|R_n^{{\modarn(1,2)}}|^2\right] \le c (1/(\Delta_n M_n^2 h_n^4)) {\modarn \E_{x_0}}{\revar
	 [|{\revnotat \smash{\dot{\widehat{X}}}}_s^\epsilon|^4 ]^{1/2}} \le  c /(M_n^4 h_n^4)$.

 Collecting the upper bounds on $R_n^{{\revar (1,1)}}$ and $R_n^{{\revar (1,2)}}$, we deduce ${\modarn \E_{x_0}}\left[|R_n^{{\revar (1)}}|^2\right] \le c/(M_n^4 h_n^4)$ and recalling \eqref{eq: control Rn integrated}, the proposition is proved {\revar  as $1/(M_n^2h_n^5)= O(h_n^{2\beta-5})=o(1)$, using $\beta\ge3$.}
\end{proof}
}
\subsection{Proof of Lemma \ref{l: bound moments Xdot}}
\begin{proof}
{\revar Using 
\eqref{eq: def x dot},} it is, for any $p \ge 2$, {\revar and $u\in[0,\Delta_n]$,}
	\begin{align*}
		\E_{\modarn x_0}[|{\revnotat \smash{\dot{\widehat{X}}}}_{\revar u}^\epsilon|^p] & \le c  \E_{\modarn x_0}[( \int_0^{\revar u} (\frac{\epsilon}{M_n} \psi_{h_n}'({\revnotat \widehat{X}}_s^\epsilon) {\revnotat \smash{\dot{\widehat{X}}}}^\epsilon_s)^2 ds)^{\frac{p}{2}}] + c \E_{\modarn x_0}[ (\int_0^{\revar u} (\frac{1}{M_n} \psi_{h_n}({\revnotat \widehat{X}}_s^\epsilon))^2 ds)^{\frac{p}{2}}] \\
		& \le c \Delta_n^{\frac{p}{2}- 1} \int_0^{\revar u}\E_{\modarn x_0}[(\frac{\epsilon}{M_n} {\revar | }\psi_{h_n}'({\revnotat \widehat{X}}_s^\epsilon) {\revnotat \smash{\dot{\widehat{X}}}}^\epsilon_s{\revar | })^p] ds + c \frac{\Delta_n^{\frac{p}{2}- 1}}{M_n^p} \int_0^{\revar u} \E_{\modarn x_0}[{\revar | }\psi_{h_n}({\revnotat \widehat{X}}_s^\epsilon){\revar | }^p] ds \\
		& \le c \frac{\Delta_n^{\frac{p}{2}- 1}}{(M_n h_n)^p} \int_0^{\revar u} \E_{\modarn x_0}[|{\revnotat \smash{\dot{\widehat{X}}}}^\epsilon_s|^p] ds + 
		{\modar 
			c \frac{\Delta_n^{\frac{p}{2}- 1}}{M_n^p} \int_0^{\revar u} {\modarn \E_{x_0}}[{\revar | }\psi_{h_n}({\revnotat \widehat{X}}_s^\epsilon){\revar | }^p] ds }	,
	\end{align*}
	where we have used Burkholder-Davis-Gundy and Jensen inequalities {\modarn with \eqref{eq: bound psi and a}}. 
	{\revar Let $M_{\revar u} =\sup_{\revar s \le u}{\modarn\E_{x_0}}[|{\revnotat \smash{\dot{\widehat{X}}}}^\epsilon_{\revar s}|^p]$. Then, from above it follows }
	$$M_{\revar u} \le c {\modar 
		\frac{\Delta_n^{\frac{p}{2}- 1}}{M_n^p} \int_0^{\revar u} {\modarn\E_{x_0}}[|\psi_{h_n}({\revnotat \widehat{X}}_s^\epsilon)|^p] ds }	 + c \frac{\Delta_n^{\frac{p}{2}- 1}}{(M_n h_n)^p} \int_0^{\revar u} M_s ds.$$
	Using Gronwall lemma, it yields 
	\begin{equation} \label{eq: control Xdot intermediaire}
	M_{\Delta_n} \le c\exp( c \frac{\Delta_n^{\frac{p}{2}}}{(M_n h_n)^p}) {\modar 
		\frac{\Delta_n^{\frac{p}{2}- 1}}{M_n^p} \int_0^{\Delta_n} {\modarn\E_{x_0}}[|\psi_{h_n}({\revnotat \widehat{X}}_s^\epsilon)|^p] ds }\le {\modar c
		\frac{\Delta_n^{\frac{p}{2}- 1}}{M_n^p} \int_0^{\Delta_n} {\modarn\E_{x_0}}[|\psi_{h_n}({\revnotat \widehat{X}}_s^\epsilon)|^p] ds }, 
	\end{equation}
	recalling that the constant $c$ may change value from line to line and that the quantity $\frac{\Delta_n^{\frac{p}{2}}}{(M_n h_n)^p}$ is bounded as $\Delta_n \rightarrow 0$ and $M_n h_n \rightarrow \infty$ for $n \rightarrow \infty$. {\modar As $\psi$ is a bounded function, this yields \eqref{eq: control Xdot}. Integrating \eqref{eq: control Xdot intermediaire} with respect to ${\modarn \pi^\epsilon}(x_0)dx_0$ and using the first point of Lemma \ref{l: bound stationarity} we deduce  \eqref{eq: control Xdot integrated}.}
\end{proof}

{\modc
\subsection{Proof of Lemma \ref{l: I1 I2}}

\begin{proof}

{\modar To control $I_1$ and $I_2$, defined by \eqref{eq: def I1-I3} we use Cauchy--Schwarz inequality. {\modarn It} 
provides, for $j \in \{1, 2 \}$,}
\begin{align*}
{\revarn \Bigg\lvert}\frac{\bracket{D_\cdot {\revnotat \widehat{X}}_{\Delta_n}^\epsilon}{(D_\cdot {\revnotat \smash{\dot{\widehat{X}}}}_{\Delta_n}^\epsilon)_j}}{\bracket{D_\cdot {\revnotat \widehat{X}}_{\Delta_n}^\epsilon}{D_\cdot {\revnotat \widehat{X}}_{\Delta_n}^\epsilon }} 
{\revarn \Bigg\rvert}
& \le 
\frac{\norm{D_\cdot {\revnotat \widehat{X}}_{\Delta_n}^\epsilon}_H\norm{(D_\cdot {\revnotat \smash{\dot{\widehat{X}}}}_{\Delta_n}^\epsilon)_j}_H}{\bracket{D_\cdot {\revnotat \widehat{X}}_{\Delta_n}^\epsilon}{ D_\cdot {\revnotat \widehat{X}}_{\Delta_n}^\epsilon}}
\\ 
& = \frac{\norm{(D_\cdot {\revnotat \smash{\dot{\widehat{X}}}}_{\Delta_n}^\epsilon)_j}_H}{\norm{D_\cdot {\revnotat \widehat{X}}_{\Delta_n}^\epsilon}_H}.
\end{align*}
Hence, using Jensen inequality, for $j=1, 2$ we have
\begin{align*}
I_j \le \int_{|x_0| \le \Delta_n^{\frac{1}{2} - \gamma}} \E_{x_0} \Bigg[\frac{\bracket{(D_\cdot {\revnotat \smash{\dot{\widehat{X}}}}_{\Delta_n}^\epsilon)_j}{(D_\cdot {\revnotat \smash{\dot{\widehat{X}}}}_{\Delta_n}^\epsilon)_j}}{\bracket{D_\cdot {\revnotat \widehat{X}}_{\Delta_n}^\epsilon}{D_\cdot {\revnotat \widehat{X}}_{\Delta_n}^\epsilon}} \Bigg] {\modarn \pi^\epsilon}(x_0) dx_0
\end{align*}
{\modar Using \eqref{eq: mino bracket}, we need to upper bound }
$\frac{1}{\Delta_n} \int_{|x_0| \le \Delta_n^{\frac{1}{2} - \gamma}} \E_{x_0}\big[\bracket{(D_\cdot {\revnotat \smash{\dot{\widehat{X}}}}_{\Delta_n}^\epsilon)_j}{(D_\cdot {\revnotat \smash{\dot{\widehat{X}}}}_{\Delta_n}^\epsilon)_j}\big] {\modarn \pi^\epsilon}(x_0) dx_0$.
We start considering what happens for $j= 1$. Using Cauchy-Schwarz inequality, the first two points of Lemma \ref{l: bound stationarity} and Lemma \ref{l: bound moments Xdot} we have 
\begin{align*}
I_1 & \le \frac{c}{\Delta_n} 
{\modar
\int_{|x_0|\le \Delta_n^{1/2-\gamma}}
\E_{x_0} [\int_0^{\Delta_n} ((D_s{\revnotat \smash{\dot{\widehat{X}}}}_{\Delta_n}^\epsilon)_1)^2 ds]  {\modarn \pi^\epsilon}(x_0) dx_0} \\
& \le  \frac{c}{\Delta_n} 
{\modar
		\int_{|x_0|\le \Delta_n^{1/2-\gamma}} \E_{x_0} [\int_0^{\Delta_n} 
		(\frac{\epsilon}{M_n} {\modarn |\psi_{h_n}'({\revnotat \widehat{X}}_s^\epsilon) {\revnotat \smash{\dot{\widehat{X}}}}_{s}^\epsilon|} + \frac{1}{M_n} \psi_{h_n}({\revnotat \widehat{X}}_s^\epsilon))^2 ds] {\modarn \pi^\epsilon}(x_0) dx_0}\\
& \le \frac{c}{\Delta_n M_n^2} 
{\modar 
	\int_{|x_0|\le \Delta_n^{1/2-\gamma}} }
\{
\int_0^{\Delta_n} \E_{\modar {x_0}}[{\modarn |\psi_{h_n}'({\revnotat \widehat{X}}_s^\epsilon)|^{{\revar 4}}}]^{{\revar\frac{1}{2}}} \E_{\revar x_0}[({\revnotat \smash{\dot{\widehat{X}}}}_{s}^\epsilon)^{{\revar 4}}]^{{\revar \frac{1}{2}}} ds + \int_0^{\Delta_n} \E_{\modar {x_0}}[(\psi_{h_n}({\revnotat \widehat{X}}_s^\epsilon))^2] ds
 \} {\modar {\modarn \pi^\epsilon}(x_0)dx_0} \\
 & \le \frac{c}{\Delta_n M_n^2} 
 {\modar 
 	\int_{|x_0|\le \Delta_n^{1/2-\gamma}} }
 \{
 \int_0^{\Delta_n} \E_{\modar {x_0}}[{\modarn |\psi_{h_n}'({\revnotat \widehat{X}}_s^\epsilon)|^{{\revar 4}}}]^{{\revar \frac{1}{2}}} {\modar \frac{\Delta_n}{M_n^2}}ds
 \\
 & \qquad\qquad\qquad \qquad\qquad\qquad\qquad\qquad 
  + \int_0^{\Delta_n} \E_{\modar {x_0}}[(\psi_{h_n}({\revnotat \widehat{X}}_s^\epsilon))^2] ds
 \} {\modar {\modarn \pi^\epsilon}(x_0)dx_0}, 
 { \revar \text{ \hfill \quad [by \eqref{eq: control Xdot},]}  }
 \\
 & \le \frac{c}{\Delta_n M_n^2} \{\Delta_n ({\revar h^{ 1 - 4}_n})^{{\revar \frac{1}{2}}}\frac{\Delta_n}{M_n^{2}} + \Delta_n {\revar  h_n} \} 
  \text{ \hfill\quad\quad\revar [by the first two points of Lemma \ref{l: bound stationarity} with Jensen's inequality,]}
 \\
& \le \frac{c{\revar \Delta_n }}{M_n^4h_n^{3/2}} + \frac{c {\revar  h_n}}{M_n^2} \\
& \le \frac{c {\modar h_n}}{M_n^2},
\end{align*}
{\revar where the last line is a consequence of $1/M_n=O(h_n^\beta)$ with $\beta\ge3$.}
\\
We now deal with $I_2$. In order to bound it we will use several times \eqref{eq: Y bounded}, {\modar \eqref{eq: majo DX}} and Lemma \ref{l: bound moments Xdot}.
It follows
\begin{align*}
\E_{\modar x_0}[((D_s{\revnotat \smash{\dot{\widehat{X}}}}_{\Delta_n}^\epsilon)_2)^2] & \le \frac{c}{M_n^2} 
\int_s^{\Delta_n} \E_{\modar x_0}[(\psi_{h_n}''({\revnotat \widehat{X}}_u^\epsilon) {\revnotat \smash{\dot{\widehat{X}}}}_u^\epsilon)^2] du + \frac{c \Delta_n}{M_n^4} \int_s^{\Delta_n} \E_{\modar x_0}[(\psi_{h_n}'({\revnotat \widehat{X}}_u^\epsilon) \psi_{h_n}''({\revnotat \widehat{X}}_u^\epsilon) {\revnotat \smash{\dot{\widehat{X}}}}_u^\epsilon)^2] du \\
& + \frac{c \Delta_n}{M_n^4} \int_s^{\Delta_n} \E_{\modar x_0}[(\psi_{h_n}'({\revnotat \widehat{X}}_u^\epsilon))^2] du \\
&\le \frac{c}{M_n^2} \int_s^{\Delta_n} \E_{\modar x_0}[|\psi_{h_n}''({\revnotat \widehat{X}}_u^\epsilon)|^{{\revar 4}}]^{\revar 1/2}{\modar\frac{\Delta_n}{M_n^2}} du + \frac{c \Delta_n}{M_n^4} \int_s^{\Delta_n} \norm{\psi_{h_n}'}_\infty^2
\E_{\modar x_0}[|\psi_{h_n}''({\revnotat \widehat{X}}_u^\epsilon)|^{\revar 4}]^{\revar 1/2} {\modar\frac{\Delta_n}{M_n^2}} du \\
& + \frac{c \Delta_n}{M_n^4} \int_s^{\Delta_n} \E_{\modar x_0}[(\psi_{h_n}'({\revnotat \widehat{X}}_u^\epsilon))^2] du 
\end{align*}
Integrating with respect to $x_0$ the last equation and applying the first two points of Lemma \ref{l: bound stationarity}, we find
\begin{align*}
I_2 \le &\frac{c}{M_n^2} \int_s^{\Delta_n} (h_n^{\revar 1 - 8})^{\revar \frac{1}{2}} 
{\modar \frac{\Delta_n}{M_n^2}} du + {\modar \frac{c \Delta_n^2}{M_n^6} {\modar \frac{1}{h_n^2}}}
\int_s^{\Delta_n} {\revar (h_n^{1 - 8})^{\frac{1}{2}}} du 
 + \frac{c \Delta_n}{M_n^4} \int_s^{\Delta_n} \frac{1}{h_n} du \\
 \le &\frac{c \Delta_n^2}{M_n^4 {\modar h_n^{\revar 7/2}}} + \frac{c \Delta_n^3}{M_n^6  {\modar h_n^{{\revar 11/2}}}} + \frac{c \Delta_n^2}{M_n^4 h_n}.
\end{align*}
{\revar 
We remark that, as the choice of the calibration parameter $M_n$ is such that  $1/M_n=O(h_n^\beta)=O(h_n^3)$, all 
the three terms here above are smaller than $\frac{h_n}{M_n^2}$. 
It follows $I_2  
\le c \frac{h_n}{M_n^2}$, 
as we wanted.} 
\end{proof}

\subsection{Proof of Lemma \ref{l: I3}}

\begin{proof}
We observe that, according to the definition of $(D_\cdot {\revnotat \smash{\dot{\widehat{X}}}}_{\Delta_n}^\epsilon)_3$, the rest term is 
{ \modc \begin{align*}
{\modar R_n^{(2)}} & = \frac{1}{\Delta_n} \big[\frac{\Delta_n}{\bracket{D_\cdot {\revnotat \widehat{X}}_{\Delta_n}^\epsilon}{D_\cdot {\revnotat \widehat{X}}_{\Delta_n}^\epsilon }} - 1\big] Y_{\Delta_n}^\epsilon \int_{0}^{\Delta_n} (\int_u^{\Delta_n}  (Y_s^\epsilon)^{-1} \frac{1}{M_n} \psi_{h_n}'({\revnotat \widehat{X}}_s^\epsilon)(D_u {\revnotat \widehat{X}}_s^\epsilon) dB_s)  (Y_u^\epsilon)^{-1} Y_{\Delta_n}^\epsilon \\
& \times (1 + \frac{\epsilon}{M_n} \psi_{h_n}({\revnotat \widehat{X}}_u^\epsilon) ) du \\
&+ \frac{{\modarn \epsilon}}{\Delta_n M_n^2} \int_{0}^{\Delta_n} (Y_{\Delta_n}^\epsilon)^2 (Y_u^\epsilon)^{-1} \int_u^{\Delta_n} (Y_s^\epsilon)^{-1} \psi_{h_n}'({\revnotat \widehat{X}}_s^\epsilon)(D_u {\revnotat \widehat{X}}_s^\epsilon) dB_s  \psi_{h_n}({\revnotat \widehat{X}}_u^\epsilon) \epsilon du \\
& + \frac{1}{\Delta_n M_n} ((Y_{\Delta_n}^\epsilon)^2 - 1) \int_{0}^{\Delta_n} \int_u^{\Delta_n} (Y_s^\epsilon)^{-1} \psi_{h_n}'({\revnotat \widehat{X}}_s^\epsilon)(D_u {\revnotat \widehat{X}}_s^\epsilon) dB_s (Y_u^\epsilon)^{-1} du \\
& 
{\modarn + \frac{1}{\Delta_n M_n} 
	\int_{0}^{\Delta_n} \int_u^{\Delta_n} \psi_{h_n}'({\revnotat \widehat{X}}_s^\epsilon)((Y_u^\epsilon)^{-1}(Y_s^\epsilon)^{-1}D_u {\revnotat \widehat{X}}_s^\epsilon - 1) dB_s du }\\
& = \sum_{j = 1}^4 {\modar R_{n}^{(2,j)}}.
\end{align*}}
{\modar Using the {\modarn expression of $D_u {\revnotat \widehat{X}}_s^\epsilon$ as} in \eqref{eq: DsXt} with \eqref{eq: control Y}--\eqref{eq: control Y inv}, we have
$$\sup_{u, s \in [0, \Delta_n]}|D_u {\revnotat \widehat{X}}_s^\epsilon- 1| \le \frac{c \Delta_n}{M_n h_n^2} + \frac{c}{M_n} \psi_{h_n}({\revnotat \widehat{X}}_s^\epsilon).$$}
{ \modc It provides the following bounds, using also {\modarn \eqref{eq: Y bounded}, \eqref{eq: bracket moins 1}, and} the fact that $\psi_{h_n}$ is bounded }
\begin{align*}
	\E_{\modar x_0}[{\modar |R_{n}^{(2,1)}|^2}] \le & (\frac{ {\modarn \Delta_n}}{ M_n h_n^2})^2 \frac{1}{\Delta_n^2} \Delta_n \int_{0}^{\Delta_n} \E_{\modar x_0}[ (\int_u^{\Delta_n}  (Y_s^\epsilon)^{-1} \frac{1}{M_n} \psi_{h_n}'({\revnotat \widehat{X}}_s^\epsilon)(D_u {\revnotat \widehat{X}}_s^\epsilon) dB_s)^2] du \\
	& \le (\frac{ c}{ M_n h_n^2})^2 {\modarn \Delta_n} \int_{0}^{\Delta_n} \int_u^{\Delta_n} \frac{1}{M_n^2}  
	\E_{\modar x_0}[ (\psi_{h_n}'({\revnotat \widehat{X}}_s^\epsilon))^2] ds du, \\
	\\
	\E_{\modar x_0}[{\modar |R_{n}^{(2,2)}|^2}] \le & (\frac{ c}{ \Delta_n M_n^2})^2 \Delta_n \int_{0}^{\Delta_n} \E_{\modar x_0}[ (\int_u^{\Delta_n}  (Y_s^\epsilon)^{-1} \frac{1}{M_n} \psi_{h_n}'({\revnotat \widehat{X}}_s^\epsilon)(D_u {\revnotat \widehat{X}}_s^\epsilon) dB_s)^2 \psi_{h_n}({\revnotat \widehat{X}}_u^\epsilon)] du \\
	& \le (\frac{ c}{ \Delta_n M_n^2})^2 \Delta_n \int_{0}^{\Delta_n} \int_u^{\Delta_n} \E_{\modar x_0}[ (\psi_{h_n}'({\revnotat \widehat{X}}_s^\epsilon))^2] ds du, \\ 
	\\
	\E_{\modar x_0}[{\modar |R_{n}^{(2,3)}|^2}] \le & (\frac{ c}{ h_n^2 M_n^2})^2 \Delta_n \int_{0}^{\Delta_n} \int_u^{\Delta_n} \E_{\modar x_0}[ (\psi_{h_n}'({\revnotat \widehat{X}}_s^\epsilon))^2] ds du, \\
	\\
	\E_{\modar x_0}[{\modar |R_{n}^{(2,4)}|^2}] \le & (\frac{ c}{{\modarn  \Delta_n M_n}})^2 \Delta_n \int_{0}^{\Delta_n} \E_{\modar x_0}[(\int_u^{\Delta_n} \psi_{h_n}'({\revnotat \widehat{X}}_s^\epsilon)((Y_u^\epsilon)^{-1}(Y_s^\epsilon)^{-1}D_u {\revnotat \widehat{X}}_s^\epsilon - 1) dB_s)^2] du \\
	& \le  (\frac{ c}{{\modarn  \Delta_n M_n}})^2 \Delta_n \int_{0}^{\Delta_n} \int_u^{\Delta_n} (\frac{\Delta_n}{M_n h_n^2} + \frac{1}{M_n})^2 \E_{\modar x_0}[ (\psi_{h_n}'({\revnotat \widehat{X}}_s^\epsilon))^2] ds du. \\
\end{align*} 
{\modar We now use the second point of Lemma \ref{l: bound stationarity}, and deduce
{\revar 
	\begin{equation*}
\int_{|x_0|\le \Delta_n^{1/2-\gamma}} 	
\sum_{j = 1}^4  \E_{x_0}[|R_{n}^{(2,j)}|^2] { \pi^\epsilon}(x_0) dx_0\le
c\{
\frac{{ \Delta_n^3}}{M_n^4 h_n^5}+\frac{\Delta_n}{M_n^4 h_n}+\frac{\Delta_n^3}{M_n^4 h_n^5} +\frac{\Delta_n^3}{M_n^4 h_n^5}+\frac{\Delta_n}{M_n^4 h_n}  \}.
\end{equation*}
}
}
 It is easy to see that all these terms are smaller than $\frac{h_n}{M_n^2}$, up to {\revar using $\frac{1}{M_n}= O( h_n^\beta)$} and remarking we have requested $\beta \ge 3$. It follows that 
$$
{\modar \int_{|x_0|\le \Delta_n^{1/2-\gamma}} \E_{x_0}[|R_n^{(2)}|^2] {\modarn \pi^\epsilon}(x_0) dx_0} \le \frac{c \, h_n}{M_n^2}.$$
The proof is therefore concluded.
\end{proof}
}
\subsection{Proof of Lemma \ref{L: suppressing dB in principal term}}
{\modar 
	We want to use Ito's formula, in order to get rid of the stochastic integral in the left hand side of \eqref{eq: suppressing dB in principal term}.}
{\modar Hence, we write}
{\modar 
	\begin{equation} \label{eq : term dB diff I ronde}
		\int_u^{\Delta_n} \psi_{h_n}'({\revnotat \widehat{X}}_s^\epsilon) dB_s=\mathcal{I}_{\Delta_n}-\mathcal{I}_{u},
	\end{equation}
where $\mathcal{I}_t$ is defined in \eqref{eq: def I ronde}. From \eqref{eq: ito}, we deduce
\begin{equation} \label{eq : suppressing dB Ito}
	\int_u^{\Delta_n} \psi_{h_n}'({\revnotat \widehat{X}}_s^\epsilon) dB_s=
	\Xi({\revnotat \widehat{X}}_{\Delta_n}^\epsilon)-\Xi({\revnotat \widehat{X}}_u^\epsilon)-\frac{1}{2} \int_u^{\Delta_n} (\frac{{\psi_{h_n}'}}{a_\epsilon})'({\revnotat \widehat{X}}_s^\epsilon) a^2_\epsilon({\revnotat \widehat{X}}^\epsilon_s) ds
\end{equation}
}
{\modar Now, recall }
\begin{align*}
	\Xi (v) & : = \int_0^v \frac{\psi_{h_n}'(u)}{a_\epsilon (u) } du 
	{ \revar~ = \int_0^v  \frac{\psi_{h_n}'(u)}{(1\frac{\epsilon}{M_n}\psi_{h_n}(u))} du   }\\
	& = \psi_{h_n} (u) - \frac{\epsilon}{M_n} \int_0^v \frac{\psi_{h_n}'(u) \, \psi_{h_n} (u)}{1 + \frac{\epsilon}{M_n} \psi_{h_n}(u)}  du
	{\modarn ~ - ~ \psi_{h_n}(0)}
	 \\
	& = : \psi_{h_n} (u) + n_{h_n} (u) {\modarn ~ - ~\psi_{h_n}(0)},
\end{align*}
{\revar where we used $a_\epsilon(u) = 1 + \frac{\epsilon}{M_n} \psi_{h_n} (u)$ in the first line. We have $\left \| n_{h_n} \right \|_{\infty} \le \frac{c}{M_n}$.	
Then,  by \eqref{eq : suppressing dB Ito}, we obtain} 
\begin{align*}
{\modar 	\mathcal{I}_{\Delta_n}-\mathcal{I}_{u}} =  \psi_{h_n} ({\revnotat \widehat{X}}_{\Delta_n}^\epsilon) - \psi_{h_n} ({\revnotat \widehat{X}}_u^\epsilon) + n_{h_n} ({\revnotat \widehat{X}}_{\Delta_n}^\epsilon) - n_{h_n} ({\revnotat \widehat{X}}_u^\epsilon) - \frac{1}{2} \int_u^{\Delta_n} (\frac{\psi_{h_n}'}{a_\epsilon })' ({\revnotat \widehat{X}}_v^\epsilon) a_\epsilon^2({\revnotat \widehat{X}}_v^\epsilon) dv.
\end{align*}
We observe that $(\frac{\psi_{h_n}'}{a_\epsilon })' = \psi_{h_n}'' + m_{h_n}$, where $\left \| m_{h_n} \right \|_{\infty} \le \frac{c}{M_n h_n^2}$. It yields
\begin{align*}
{\modar 	\mathcal{I}_{\Delta_n}-\mathcal{I}_{u}} =  \psi_{h_n} ({\revnotat \widehat{X}}_{\Delta_n}^\epsilon) - \psi_{h_n} ({\revnotat \widehat{X}}_u^\epsilon) + n_{h_n} ({\revnotat \widehat{X}}_{\Delta_n}^\epsilon) - n_{h_n} ({\revnotat \widehat{X}}_u^\epsilon) - \frac{1}{2} \int_u^{\Delta_n} \psi_{h_n}'' ({\revnotat \widehat{X}}_v^\epsilon)dv + o_{\left \| \cdot \right \|_\infty}(\Delta_n \frac{1}{h_n^2 M_n}),
\end{align*}
{\modrev where we have introduced the notation $o_{\left \| \cdot \right \|_\infty}(\cdot)$ for $o(\cdot)$ such that the control is uniform over $\R$.}
{\revar Using  \eqref{eq : term dB diff I ronde}, we deduce}
\begin{align*}
	&{\revar \frac{1}{\Delta_n M_n} \int_0^{\Delta_n} 
	\int_{u}^{\Delta_n} \psi_{h_n}'({\revnotat \widehat{X}}_s^\epsilon)dB_s		
		 du
	= }
	\frac{1}{\Delta_n M_n} \int_0^{\Delta_n}{\modar 	(\mathcal{I}_{\Delta_n}-\mathcal{I}_{u}) }  du \\
	&= - \frac{1}{2 \Delta_n M_n} \int_0^{\Delta_n} (\int_u^{\Delta_n} \psi_{h_n}'' ({\revnotat \widehat{X}}_v^\epsilon)dv) du + \frac{1}{\Delta_n M_n} \int_0^{\Delta_n} (\psi_{h_n} ({\revnotat \widehat{X}}_{\Delta_n}^\epsilon) - \psi_{h_n} ({\revnotat \widehat{X}}_u^\epsilon)) du \\
	& + \frac{1}{\Delta_n M_n} \int_0^{\Delta_n} (n_{h_n} ({\revnotat \widehat{X}}_{\Delta_n}^\epsilon) - n_{h_n} ({\revnotat \widehat{X}}_u^\epsilon)) du + \frac{c}{\Delta_n M_n} \int_0^{\Delta_n} 
	{\modar o_{\left \| \cdot \right \|_\infty}(\Delta_n \frac{1}{h_n^2 M_n})}	
	du.
\end{align*}
{\revar
The} previous equation can be written
\begin{equation*}
	\frac{1}{\Delta_n M_n} \int_{0}^{\Delta_n} \int_u^{\Delta_n} \psi_{h_n}'({\revnotat \widehat{X}}_s^\epsilon) dB_s du
	  {\revar ~=:} -\frac{1}{2\Delta_n M_n}  \int_0^{\Delta_n}  \psi''_{h_n}({\revnotat \widehat{X}}_s^\epsilon)sds+\sum_{j=1}^3R^{(3,j)}_n
\end{equation*}
Comparing with \eqref{eq: suppressing dB in principal term}, we need to 
prove $\int_{|x_0|\le \Delta_n^{1/2-\gamma}} \E_{x_0}[|R^{(3,j)}_n|^2] (x_0){\modarn \pi^\epsilon}dx_0
\le \frac{c h_n}{M_n^2}$ for $j=1,2,3$.
In particular, using also the first point of Lemma \ref{l: bound stationarity}, we have for $j=1$,
\begin{align*}
\int_{|x_0|\le \Delta_n^{1/2-\gamma}} \E_{x_0}[|R^{(3,1)}_n|^2] {\modarn \pi^\epsilon}(x_0)dx_0
&\le \frac{c}{ \Delta_n^2 M_n^2} \Delta_n \int_0^{\Delta_n} \int_{|x_0|\le \Delta_n^{1/2-\gamma}}(\E[\psi^2_{h_n} ({\revnotat \widehat{X}}_{\Delta_n}^\epsilon)] + \E[\psi^2_{h_n} ({\revnotat \widehat{X}}_{u}^\epsilon)]) {\modarn \pi^\epsilon}(x_0)dx_0du 
\\ & \le \frac{c h_n}{M_n^2}.
\end{align*}
{\revar We have,  using $\norm{\eta_{h_n}} \le C/M_n$,}
\begin{align*}
	\int_{|x_0|\le \Delta_n^{1/2-\gamma}} \E_{x_0}[|R^{(3,2)}_n|^2] {\modarn \pi^\epsilon}(x_0)dx_0
	\le \frac{c}{M_n^2}(\frac{1}{{\revar  M_n}})^2 = c \frac{1}{{\revar M_n^4}}.
\end{align*}
This is negligible with respect to $\frac{h_n}{M_n^2}$ as 
{\revar $1/M_n=O(h_n^3)$.
} Regarding {\modar $R^{(3,3)}$ we have, $|R^{(3,3)}|\le c \Delta_n / (h_n^2M_n^2)$ and thus, 
\begin{align*}
	\int_{|x_0|\le \Delta_n^{1/2-\gamma}} \E_{x_0}[|R^{(3,3)}_n|^2] {\modarn \pi^\epsilon}(x_0)dx_0\le c \frac{\Delta_n^2}{M_n^4 h_n^4},
\end{align*}
which is clearly negligible compared to $\frac{h_n}{M_n^2}$, acting as {\revar for $R^{(3,2)}$.}}

\subsection{Proof of Lemma \ref{l: main term malliavin}}
\begin{proof}
	Let us denote by $p^\epsilon_{s,\Delta_n}(\cdot \mid x,y)$ 
	the density of the law of ${\revnotat \widehat{X}}^\epsilon_s$ conditional to ${\revnotat \widehat{X}}^\epsilon_0=x$ and ${\revnotat \widehat{X}}^\epsilon_{\Delta_n}=y$. Then, we have
	\begin{align} \label{E: def ell TL}
	\ell_{x,y,\Delta_n,h_n}:=\E_x \left[
	\int_0^{\Delta_n} 
	(\psi_{h_n})^{\prime\prime}({\revnotat \widehat{X}}^\epsilon_s) sds
	\mid {\revnotat \widehat{X}}^\epsilon_{\Delta_n}=y\right]	
		&=\int_0^{\Delta_n}s\int_{\mathbb{R}}  (\psi_{h_n})^{\prime\prime}(z ) p^\epsilon_{s,\Delta_n}(z\mid x,y) dzds
		\\ \label{E: ell majo debut}
		&=\int_0^{\Delta_n}s\int_{\mathbb{R}}  \psi_{h_n}(z ) 
		\frac{\partial^2 p^\epsilon_{s,\Delta_n}(z\mid x,y)}{\partial z^2} dzds
	\end{align}
Recalling the expression for the density of the pinned diffusion {\revar 
	\begin{equation*} p^\epsilon_{s,\Delta_n}(z\mid x,y)=\frac{p^\epsilon_s(x,z)p^\epsilon_{\Delta_n-s}(z,y)}{p^\epsilon_{\Delta_n}(x,y)},
	\end{equation*}}
 we have
		$$
	 \frac{\partial^2 p^\epsilon_{s,\Delta_n}(z\mid x,y)}{\partial z^2}= 
		\frac{\frac{\partial^2 p^\epsilon_{s}(x,z)}{\partial z^2} p_{\Delta_n-s}(z,y)+
			2\frac{\partial p^\epsilon_{s}(x,z)}{\partial z} \frac{\partial p^\epsilon_{\Delta_n-s}(z,y)}{\partial z}+ 
			 p^\epsilon_{s}(x,z)\frac{\partial^2p_{\Delta_n-s}(z,y)}{\partial z^2}
	}{p^\epsilon_{\Delta_n}(x,y)}.
	$$ 
{\modr Now, we use the Gaussian controls  given in Sections A.2.2 - A.2.3 of Azencott \cite{Azencott84} for the density of the transition of a diffusion and of its derivatives. 
	Notice that the results of  \cite{Azencott84} apply here to the diffusion $\widehat{X}^\epsilon$ as the conditions given in Section A.2.1 of \cite{Azencott84} are satisfied. Especially, the condition (3) p. 477 of \cite{Azencott84} requires that at least the first three derivatives of the coefficients of the SDE are bounded, which is valid here from the condition $1/(M_n h_n^3) \le 1$.
	An important feature of the controls given in \cite{Azencott84} is that they sharply relate the variance of the Gaussian controls with the diffusion coefficient of the diffusion process. }
{\modr From (6) in Section A.2.2 of  \cite{Azencott84} (see also (75) p. 492 of \cite{Azencott84}), we have}
	\begin{equation} \label{E:Azencott_derivative_x}
\abs{\frac{\partial^j p^\epsilon_s(u,v)}{\partial u^j} }\le  \frac{C}{s^{j/2}}\gaussian_{s\lambda}(u-v), \quad j=0,1,2,
	\end{equation}
	where $\gaussian_{s\lambda}$ is the density of the {\modarn centred real} Gaussian variable with variance {\revar $s\lambda $} and $\lambda>0$ is any real constant such that {\modr $\lambda> (1+\epsilon \frac{1}{M_n} \psi_{h_n}(z))^2$,} for all $z \in \mathbb{R}$. 
As the diffusion process is symmetric with respect to the stationary probability, we know
$p^\epsilon_s(u,v)=p^\epsilon_s(v,u) \frac{\pi^\epsilon(v)}{\pi^\epsilon(u)}$ where $\pi^\epsilon$ is  {\revar the function  $ u \mapsto \frac{c_{\pi^\epsilon}}{(1+\frac{\epsilon}{M_n}\psi_{h_n}(u))^2}
e^{-2\eta \int_0^u \widehat{\text{sgn}}(w)dw}$, and the constant $c_{\pi^\epsilon}$ is bounded independently of $n,\epsilon$ recalling \eqref{eq : pi epsilon explicit} and the discussion below.} Using the assumption 
$\limsup_{n} \frac{1}{h_n^2 M_n}<\infty$, we can check that the first two derivatives of $\pi^\varepsilon$ are bounded and it follows from \eqref{E:Azencott_derivative_x} that
	\begin{equation} \label{E:Azencott_derivative_y}
	\abs{\frac{\partial^j p^\epsilon_s(u,v)}{\partial v^j} }\le  \frac{C}{s^{j/2}}\gaussian_{s\lambda}(u-v), \quad j=0,1,2.
\end{equation}
As $\norm{\psi_{h_n}/M_n}_\infty \to0$, it is possible for $n$ large enough to use \eqref{E:Azencott_derivative_x}--\eqref{E:Azencott_derivative_y} with {\modr $\lambda=1+\eta ~ \in (1,5/4]$,} we deduce
$$
\abs{\frac{\partial^2 p^\epsilon_{s,\Delta_n}(z\mid x,y)}{\partial z^2}}
\le C \left[\frac{1}{s}+ \frac{1}{\sqrt{s} \sqrt{\Delta_n - s}} + \frac{1}{\Delta_n-s}\right]
\frac{\gaussian_{s(1{\modr+}\eta)}(z-x)\gaussian_{(\Delta_n -s)(1{\modr+}\eta)}(y-z)}{p^\epsilon_{\Delta_n}(x,y)}.
$$
{\revarn Using Section A.2.3. in} Azencott \cite{Azencott84}, with {\modr $(1-\epsilon \frac{1}{M_n} \psi_{h_n}(z))^2> (1-\eta)$} for $n$ large enough, we have $p^\epsilon_{\Delta_n}(x,y) \ge C \gaussian_{{\modr \Delta_n(1-\eta)}}(y-x)$. In turn, we deduce
\begin{align*}
\abs{\frac{\partial^2 p^\epsilon_{s,\Delta_n}(z\mid x,y)}{\partial z^2}} &
\le C \left[\frac{1}{s}+\frac{1}{\Delta_n-s}\right]
\frac{\gaussian_{s(1{\modr+}\eta)}(z-x)\gaussian_{(\Delta_n-s)(1{\modr+}\eta)}(y-z)}{\gaussian_{\Delta_n(1{\modr-}\eta)}(y-x)}
\\
&
\le C \left[\frac{1}{s}+\frac{1}{\Delta_n-s}\right]
\frac{\gaussian_{s(1{\modr+}\eta)}(z-x)\gaussian_{(\Delta_n-s)(1{\modr+}\eta)}(y-z)}{\gaussian_{\Delta_n(1{\modr+}\eta)}(y-x)} e^{2\eta\frac{(y-x)^2}{\Delta_n}}.
\end{align*}
Remarking that the ratio of Gaussian densities in the previous display is the law at time $s\in(0,\Delta_n)$ of a Brownian bridge from $x$ to $y$ with {\revarn diffusion coefficient $\sqrt{1{\modr+}\eta}$,} we deduce from \eqref{E: ell majo debut},
\begin{equation} \label{E: control ell par BM} 
\abs{\ell_{x,y,\Delta_n,h_n}} \le C \int_0^{\Delta_n} s \left[\frac{1}{s}+\frac{1}{\Delta_n -s}\right] \E_x\left[\abs{\psi_{h_n}({\modarn \widetilde{B}}_s)} \mid {\modarn \widetilde{B}}_{\Delta_n}=y\right]ds,\end{equation}
where ${\modarn \widetilde{B}}$ is a Brownian motion with {\revarn diffusion coefficient $\sqrt{1{\modr+}\eta}$.}  Denoting the local time at level $z$ of the Brownian motion ${\modarn \widetilde{B}}$ by $(L^{z}_t({\modarn \widetilde{B}}))_t$ we deduce using Fubini's Theorem and the occupation time formula {\revar (see Corollary (1.6) and Exercise 1.15 of Chapter VI in \cite{RevYor_3rd_edition})},
\begin{align}\nonumber
\abs{\ell_{x,y,\Delta_n,h_n}}& \le C \E_x\left[
\int_0^{\Delta_n}
\left[1+\frac{s}{\Delta_n-s}\right] \abs{\psi_{h_n}({\modarn \widetilde{B}}_s)} ds \mid {\modarn \widetilde{B}}_{\Delta_n}=y\right]e^{2\eta\frac{(y-x)^2}{\Delta_n}}\\
&
\label{E: control l}
= \frac{C}{1-\eta}\int_{\mathbb{R}} \abs{\psi_{h_n}(z)}\E_x\left[
\int_0^{\Delta_n}
\left[1+\frac{s}{\Delta_n-s}\right] dL^z_s({\modarn \widetilde{B}})  \mid {\modarn \widetilde{B}}_{\Delta_n}=y\right]dze^{2\eta\frac{(y-x)^2}{\Delta_n}}.
\end{align}
Using that, after time reversal, the law of the Brownian bridge remains a Brownian {\modarn bridge,}
we have
\begin{equation*}
\E_x\left[
\int_0^{\Delta_n}
\left[1+\frac{s}{\Delta_n-s}\right] dL^z_s({\modarn \widetilde{B}})  \mid {\modarn \widetilde{B}}_{\Delta_n}=y\right]
= \E_y\left[
\int_0^{\Delta_n}
\left[1+\frac{\Delta_n-s}{s}\right] dL^z_s({\modarn \widetilde{B}})  \mid {\modarn \widetilde{B}}_{\Delta_n}=x\right].
\end{equation*}
Using Lemma \ref{L: majo func Local time cond Delta} with $\beta=0$ and $\beta=1$ {\revar on the right hand side of the previous equation}, we deduce, for $y\neq z$
\begin{equation} \label{E: cont TL y ge h}
\E_x\left[
\int_0^{\Delta_n}
\left[1+\frac{s}{\Delta_n-s}\right] dL^z_s({\modarn \widetilde{B}})  \mid {\modarn \widetilde{B}}_{\Delta_n}=y\right]
\le C \sqrt{\Delta_n} \left[1+
{\modarn \left[ 
\abs{\frac{y-z}{\sqrt{\Delta_n}}}^{-1} +1 \right]}e^{-\frac{(y-z)^2}{\eta\Delta_n}}e^{\frac{\eta(y-x)^2}{\Delta_n}}
\right].
\end{equation}
We now focus on the case $\abs{y}>2h_n$.  Using \eqref{E: control l}, \eqref{E: cont TL y ge h} and that the support of $\psi_{h_n}$ is included in $[-h_n,h_n]$, we have,
\begin{equation*}
	\abs{\ell_{x,y,\Delta_n h_n}}
	\le 
	C \sqrt{\Delta_n} 
	\norm{{\revar\psi}}_\infty
	\int_{-h_n}^{h_n} \left[1+
	\abs{\frac{y-z}{\sqrt{\Delta_n}}}^{-1} e^{-\frac{(y-z)^2}{\eta\Delta_n}} \right]dz e^{\frac{3\eta(y-x)^2}{\Delta_n}}.
\end{equation*}
Using that  $u \mapsto u^{-1} e ^{-\frac{u}{\eta\Delta_n}}$ is non increasing on $(0,\infty)$ and that $|y| \ge 2h_n$, we deduce
\begin{equation*}
\abs{\ell_{x,y,\Delta_n,h_n}} \le 2 C \sqrt{\Delta_n} h_n [1+\abs{\frac{\abs{y}-h_n}{\sqrt{\Delta_n}}}^{-1} e^{-\frac{(\abs{y}-h_n)^2}{\eta\Delta_n}} ] e^{\frac{3\eta(y-x)^2}{\Delta_n}}. 
\end{equation*}
Recalling the definition of $\ell_{x,y,\Delta_n,h_n}$ {\modarn given in} \eqref{E: def ell TL},
 this entails \eqref{E: terme pal crochet psi pp} for $\abs{y} >2h_n$.

We now treat the case $\abs{y} \le 2h_n$. Recalling the definition of $\ell_{x,y,\Delta_n,h_n}$ in \eqref{E: def ell TL} we write $\int_0^{\Delta_n} (\psi_{h_n})^{\prime\prime}({\revnotat \widehat{X}}_s)s ds=
\int_0^{\Delta_n-h^2_n} (\psi_{h_n})^{\prime\prime}({\revnotat \widehat{X}}_s)s ds + O(\Delta_n)$, where we used
$\norm{\psi_{h_n}^{\prime\prime}}_\infty \le C/h_n^2$.
We deduce that 
$$\ell_{x,y,\Delta_n,h_n} \le 
\E_x \left[
\int_0^{\Delta_n-h^2_n} (\psi_{h_n})^{\prime\prime}({\revnotat \widehat{X}}^\epsilon_s)s ds
\mid {\revnotat \widehat{X}}^\epsilon_{\Delta_n}=y\right]
+O(\Delta_n).$$
  Then, we repeat the same computations that yields to \eqref{E: control l}, we deduce
\begin{equation}\label{E: maj ell pour y petit}
	\ell_{x,y,\Delta_n,h_n}\le
	\frac{C}{1-\eta}\int_{\mathbb{R}} \abs{\psi_{h_n}(z)}\E_x\left[
	\int_0^{\Delta_n-h_n^2}
	\left[1+\frac{s}{\Delta_n-s}\right] dL^z_s(\tilde{B})  \mid \tilde{B}_{\Delta_n}=y\right]dze^{2\eta\frac{(y-x)^2}{\Delta_n}} + O(\Delta_n)
\end{equation} 
By time reversal invariance of the law of the Brownian bridge, we have
\begin{align}{\label{E: new}}
	\E_x \left[\int_0^{\Delta_n-h_n^2}
	\left[1+\frac{s}{\Delta_n-s}\right] dL^z_s({\modarn \widetilde{B}})  \mid {\modarn \widetilde{B}}_{\Delta_n}=y\right]
	&=\E_y \left[
	\int_{h_n^2}^{\Delta_n}
	\left[1+\frac{\Delta_n-s}{s}\right] dL^z_s({\modarn \widetilde{B}}) 
	 \mid {\modarn \widetilde{B}}_{\Delta_n}=x\right]
	 \\ \nonumber
	 & \le \E_y \left[
	 \int_{0}^{\Delta_n}
	  dL^z_s({\modarn \widetilde{B}}) 
	 \mid {\modarn \widetilde{B}}_{\Delta_n}=x\right]
	 \\ \nonumber & \quad\quad\quad
	+ \Delta_n
	\E_y \left[
	\int_{h_n^2}^{\Delta_n}\frac{1}{s} dL^z_s({\modarn \widetilde{B}}) 
	\mid {\modarn \widetilde{B}}_{\Delta_n}=x\right]
\\ \nonumber& \le C \sqrt{\Delta_n} e^{\eta\frac{(y-x)^2}{\Delta_n}} + \frac{C \Delta_n}{h_n},
\end{align}
where we used Lemma \ref{L: majo func Local time cond Delta} with $\beta=0$, and \eqref{E: moment LT int cond xy tronque h2} in Lemma \ref{L: divers majo TL}.
Since the support of $\psi_{h_n}$ is $[-h_n,{\modarn h_n}]$, we deduce from \eqref{E: maj ell pour y petit} and \eqref{E: new}
\begin{equation*}
	\ell_{x,y,\Delta_n,h_n}\le  C \sqrt{\Delta_n} h_n e^{3\eta\frac{(y-x)^2}{\Delta_n}} + C \Delta_n e^{2\eta\frac{(y-x)^2}{\Delta_n}}.
\end{equation*}
This yields \eqref{E: terme pal crochet psi pp} for $\abs{y} \le2h_n$.
\end{proof}
\subsubsection{Conditional first moment for integrals of local time}
In this section we obtain upper bound for conditional first moment of quantities of the form
$\int_0^1 \varphi(s) dL_s^0({\revarn \widehat {B}})$ where ${\revarn \widehat {B}}$ is a B.M. with 
{\revarn diffusion coefficient $\sqrt{1{\modr+}\eta}\in {\modr (1, \sqrt{2}]}$} 
and deduce controls useful in the proof of
{\modarn Lemma \ref{l: main term malliavin}}. 
In \cite{Gradinaru_et_al_1999} the authors study the law of such quantities when $\varphi(s)=s^{-\beta}$ with $\beta \le 1/2$. We follow some ideas from \cite{Gradinaru_et_al_1999}.
\begin{lemma} \label{L: divers majo TL}
	Consider $({\modarn \widehat{B}}_s)_{s\in[0,1]}$ a Brownian motion with diffusion coefficient $\sqrt{1{\modr+}\eta}\in 
		{\modr (1,\sqrt{2}]}$.
	\begin{enumerate}
		\item Let $\psi: \mathbb{R} \mapsto [0,\infty)$ be a bounded measurable function. Then,
		\begin{align} \label{E: moment LT int}
			&\E_0 \left[ \int_0^1 \psi(s) {\modarn dL_s^0(\widehat{B})}  \right] \le {\modr \frac{1}{\sqrt{\bm{\pi}}}} \int_0^1 \frac{\psi(u)}{\sqrt{u}} du,
			\\ \label{E: moment LT int cond}
			&\E_0 \left[ \int_0^a \psi(s) {\modarn dL_s^0(\widehat{B})} \mid {\modarn \widehat{B}}_a=0 \right] \le \frac{{\modr \sqrt{2}}}{\sqrt{\bm{\pi}}} \int_0^a \psi(u)\left(\frac{1}{\sqrt{u}}+\frac{1}{\sqrt{a-u}}\right) du, \quad
			\text{$\forall a \in (0,1)$}.
		\end{align}
		\item For $\beta\ge 0$, $\beta\neq 1/2$ and $(x,y) \in \mathbb{R}^2$, $y\neq 0$, we have
		\begin{equation} \label{E: moment LT int cond xy}
			\E_y \left[\int_0^1 \frac{dL_s^0({\modarn \widehat{B}})}{s^\beta} \mid {\modarn \widehat{B}}_1=x\right]\le 
		C(1+\abs{y}^{(1-2\beta)\land 0} e^{-\frac{y^2}{\eta}+\eta x^2}).
		\end{equation}
		Moreover, the result holds true for $y=0$ if $\beta\in[0,1/2)$.
		\item We have, for all ${\modarn (x,y,z) \in \mathbb{R}^3}$, $0<h_n^2<\Delta_n<1$,
		\begin{equation}\label{E: moment LT int cond xy tronque h2}
			\E_x \left[\int_{h_n^2}^{\Delta_n} \frac{dL_s^{\modarn z}({\modarn \widehat{B}})}{s}\mid {\modarn \widehat{B}_{\Delta_n}}=y\right] \le \frac{C}{h_n}
			{\modarn ,}
		\end{equation}
	{\modarn where the constant $C$ is independent of $x,y,z,h_n$,$\Delta_n$.}	
	\end{enumerate}
\end{lemma}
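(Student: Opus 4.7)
The plan is to treat each of the three parts using the explicit Markov bridge representation of the conditional expectation of the local time, together with Tanaka's formula and careful estimates on the resulting integrals. For (1a), I would take expectations in Tanaka's formula $|\widehat{B}_s| = \int_0^s \sgn(\widehat{B}_u)\,d\widehat{B}_u + L_s^0(\widehat{B})$ with $\widehat{B}_0=0$ to get $\E_0[L_s^0(\widehat{B})] = \E_0[|\widehat{B}_s|] = \sqrt{2(1+\eta)s/\bm{\pi}}$; differentiating in $s$ and using $1+\eta\le 2$ yields $d\E_0[L_s^0]/ds \le 1/\sqrt{\bm{\pi}s}$, and integrating against $\psi$ gives \eqref{E: moment LT int}. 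For (1b), the Markov bridge formula produces
\[
\E_0[dL_s^0/ds \mid \widehat{B}_a=0] = \frac{p_s(0,0)\,p_{a-s}(0,0)}{p_a(0,0)} = \frac{\sqrt{a}}{\sqrt{2\bm{\pi}(1+\eta)s(a-s)}},
\]
and the elementary inequality $\sqrt{a}\le \sqrt{s}+\sqrt{a-s}$ immediately yields \eqref{E: moment LT int cond}.

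For (2), the bridge formula gives the conditional density of $\widehat{B}_s$ at $0$ under $\Pb_y(\cdot\mid \widehat{B}_1=x)$:
\[
q_s(y,0,x) = \frac{1}{\sqrt{2\bm{\pi}(1+\eta)s(1-s)}}\exp\left(-\frac{(y(1-s)+xs)^2}{2(1+\eta)s(1-s)}\right),
\]
so I need to bound $\int_0^1 s^{-\beta} q_s(y,0,x)\,ds$. I would split at $s=1/2$: on $[1/2,1]$, $s^{-\beta}\le 2^\beta$ and the bare bound $q_s\le 1/\sqrt{2\bm{\pi}(1+\eta)s(1-s)}$ gives a uniform constant. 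On $[0,1/2]$, the expansion
\[
\frac{(y(1-s)+xs)^2}{s(1-s)} = \frac{y^2(1-s)}{s} + 2xy + \frac{x^2s}{1-s}
\]
combined with Young's inequality $|2xy|\le y^2/\eta + \eta x^2$ on the cross term and the bound $(1-s)\ge 1/2$ produces an exponent dominated by $-c\,y^2/s + \eta x^2$; here the large factor $(1-s)/s\ge 1$ on $[0,1/2]$ is crucial to absorb the spurious positive $y^2/\eta$ contribution from Young's inequality into the leading $-y^2(1-s)/(2(1+\eta)s)$ term. The substitution $v = y^2/s$ then evaluates $\int_0^{1/2} s^{-\beta-1/2}e^{-cy^2/s}\,ds \asymp |y|^{1-2\beta}\,\Gamma(\beta-1/2)$ for $\beta>1/2$, and yields a uniformly bounded quantity for $\beta\in[0,1/2)$, giving \eqref{E: moment LT int cond xy}.

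The main obstacle in (2) is precisely this parameter tuning in Young's inequality on $[0,1/2]$: extracting a sharp $e^{-y^2/\eta}$ decay and the $e^{\eta x^2}$ factor simultaneously requires that the positive $y^2$ contribution produced by the cross term be dominated by the leading $-y^2(1-s)/s$ term, which is only feasible because $(1-s)/s \ge 1$ on this interval. For (3), I would translate $\widehat{B}\mapsto \widehat{B}-z$ to reduce to local time at $0$ with shifted endpoints $x'=x-z$, $y'=y-z$; the Markov bridge formula then gives
\[
\E_{x'}\left[\int_{h_n^2}^{\Delta_n} \frac{dL_s^0}{s} \mid \widehat{B}_{\Delta_n}=y'\right]
= \int_{h_n^2}^{\Delta_n} \frac{1}{s}\,\frac{p_s(x',0)\,p_{\Delta_n-s}(0,y')}{p_{\Delta_n}(x',y')}\,ds,
\]
and bounding the bridge density uniformly in $(x',y')$ by its peak value $\sqrt{\Delta_n/(2\bm{\pi}(1+\eta)s(\Delta_n-s))}$ reduces the problem to the elementary integral $\int_{h_n^2}^{\Delta_n} s^{-3/2}(\Delta_n-s)^{-1/2}\,ds$. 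A splitting at $\Delta_n/2$ bounds this by $C/(h_n\sqrt{\Delta_n}) + C/\Delta_n$; multiplying by the $\sqrt{\Delta_n}$ prefactor and using $h_n^2\le \Delta_n$ yields $C/h_n$ with the constant uniform in $x,y,z$, giving \eqref{E: moment LT int cond xy tronque h2}.
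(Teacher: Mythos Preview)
Your approach is correct in outline and genuinely different from the paper's. The paper proves all three parts via a path-decomposition argument: for Part~1 it invokes the balayage formula and the arcsine law of $g_1=\sup\{s\le 1:\widehat{B}_s=0\}$; for Parts~2 and~3 it conditions on the first hitting time $\tau_0$ of zero and on $g_1$ (resp.\ $g_{\Delta_n}$), uses that between $\tau_0$ and $g_1$ the process is a Brownian bridge from~0 to~0 (so Part~1b applies), and then integrates against the explicit density of $\tau_0$. Your route---computing the conditional occupation density $q_s$ of the bridge directly and estimating the resulting one-dimensional integrals---is more elementary and bypasses the balayage/meander machinery entirely. In particular your Part~3 argument (drop the Gaussian factor in $q_s$, bound $\int_{h_n^2}^{\Delta_n} s^{-3/2}(\Delta_n-s)^{-1/2}\,ds$ by splitting at $\Delta_n/2$) is noticeably shorter than the paper's.

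There is one genuine imprecision in your Part~2 sketch. After applying Young's inequality $|2xy|\le y^2/\eta+\eta x^2$, the exponent of $q_s$ is bounded above by
\[
-\frac{y^2(1-s)}{2(1+\eta)s}+\frac{y^2}{2(1+\eta)\eta}+\frac{\eta x^2}{2(1+\eta)}.
\]
Your claim that ``$(1-s)/s\ge 1$ on $[0,1/2]$'' suffices to absorb the spurious $+y^2/(2(1+\eta)\eta)$ term is not quite right: absorption requires $(1-s)/s>1/\eta$, i.e.\ $s<\eta/(1+\eta)$, which is strictly smaller than $1/2$ when $\eta<1$. Moreover, your final sentence for $\beta>1/2$ (``the substitution $v=y^2/s$ gives $\asymp|y|^{1-2\beta}\Gamma(\beta-1/2)$'') produces only the power of~$|y|$, not the Gaussian factor $e^{-y^2/\eta}$ that the statement demands. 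Both issues are repaired by the same device: split $[0,1]$ at some small $s_0=s_0(\eta)$; on $[s_0,1]$ simply drop the exponential in $q_s$ to get a constant bound independent of $x,y$; on $[0,s_0]$ apply Young and then write $-\frac{y^2(1-s)}{2(1+\eta)s}\le -\frac{y^2}{4(1+\eta)s}$ and split this into two halves, one to feed the $v=y^2/s$ substitution and the other (using $s\le s_0$) to dominate both the Young remainder and an additional $-y^2/\eta$. With this adjustment your argument goes through and matches the stated bound, including the $y=0$ case for $\beta\in[0,1/2)$.
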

\begin{proof} {\em Point 1.}
	We start with the proof of \eqref{E: moment LT int}. We follow the ideas in {\modarn Remark 1.25 iii)} of \cite{Gradinaru_et_al_1999}. Let us denote by $g_t=\sup\{ s \in [0,t], {\modarn \widehat{B}}_s=0\}$ {\modr which belongs to $[0,t]$ as $\widehat{B}_0=0$.} 
	Applying the balayage formula (Theorem 4.2 in \cite{RevYor_3rd_edition}),
	we have
	\begin{align*}
		\psi(g_1)|{\modarn \widehat{B}}_1|&=\psi(0)|{\modarn \widehat{B}}_0| + \int_0^1 \psi(g_s) d |{\modarn \widehat{B}}_s|
		\\
		&=\psi(0)|{\modarn \widehat{B}}_0| + \int_0^1 \psi(g_s) \text{sign}({\modarn \widehat{B}}_s) d {\modarn \widehat{B}}_s + \int_0^t \psi(g_s) d L^0_s({\modarn \widehat{B}}).
	\end{align*}
	Taking expectation, and remarking that on the support of the measure $dL^0({\modarn \widehat{B}})$, we have $g_s=s$, we deduce  
		$$
	\E_0\left[\int_0^1 \psi(s) dL^{0}_s({\modarn \widehat{B}})\right]=\E_0\left[\psi(g_1) |{\modarn \widehat{B}}_1| \right].
	$$
	{\revar From Section 3 of Chapter XII in \cite{RevYor_3rd_edition} (see also \cite{BertoinPitman94}), we know that conditionally} 
	to $g_1$ the law of $|{\modarn \widehat{B}}_1|$ is $\sqrt{1-g_1} \sqrt{1{\modr+}\eta} M_1$ where $(M_t)_t$ is a Brownian {\revar meander. By \cite{Durrett_et_al77}, the variable
		$M_1$ follows the Rayleigh distribution, for which $\E[M_1]=\sqrt{\frac{\bm{\pi}}{2}}$.} It entails,
	$$ \E_0\left[\int_0^1 \psi(s) dL^{0}_s({\modarn \widehat{B}})\right] \le \sqrt{\frac{(1{\modr+}\eta)\bm{\pi}}{2}}
	\E_0[\psi(g_1){\modarn \sqrt{1-g_1}}].$$
	As $g_1$ is distributed according to the arcsin law {\revar (e.g. see Paragraph 18 in Chapter 4 of \cite{BorodinSalminenBook02}),} with density $\frac{1}{\bm{\pi}\sqrt{s(1-s)}}$ we deduce \eqref{E: moment LT int}.
	
	We now prove \eqref{E: moment LT int cond}. We split the left hand side of \eqref{E: moment LT int cond} as
	\begin{equation*}
		\E_0 \left[ \int_0^{a/2} \psi(s) dL_s^0({\modarn \widehat{B}}) \mid {\modarn \widehat{B}}_a=0 \right]+
		\E_0 \left[ \int_{a/2}^{a} \psi(s) dL_s^0({\modarn \widehat{B}}) \mid {\modarn \widehat{B}}_a=0 \right].
	\end{equation*}
	{\revar The law of the Brownian bridge is absolutely continuous with respect to the the law of the Brownian motion on $\mathcal{F}_u=\sigma\{{ \widehat{B}}_s, { s \le u}\}$,  for  $u <a$, and the Radon--Nikodym density as given in Paragraph 23 of Chapter 4 of \cite{BorodinSalminenBook02} is
	\begin{equation*}
		\frac{\gaussian_{(1-\eta)(a-u)}({ \widehat{B}}_{u})}{\gaussian_{(1-\eta)a}(0)}.
\end{equation*}
This enables us to write,}
	\begin{align*}
		\E_0 \left[ \int_0^{a/2} \psi(s) dL_s^0({\modarn \widehat{B}}) \mid {\modarn \widehat{B}}_a=0 \right]
	&=\E_0 \left[ \int_0^{a/2} \psi(s) dL_s^0({\modarn \widehat{B}}) \frac{\gaussian_{(1-\eta)a/2}({\revar \widehat{B}}_{a/2})}{\gaussian_{(1-\eta)a}({\revar0})} \right]\\
	&\le \sqrt{2} \E_0 \left[ \int_0^{a/2} \psi(s) dL_s^0({\modarn \widehat{B}}) \right]
	\le \frac{{\modr \sqrt{2}}}{\sqrt{\bm{\pi}}} \int_0^{a/2} \frac{\psi(u)}{\sqrt{u}}du,
	\end{align*}
{\revar where the first inequality on the second line follows from the explicit expression for the Gaussian density, and the last inequality
comes from \eqref{E: moment LT int}.
	Using invariance by time reversion of the law of the Brownian bridge, we get,}
	\begin{align*}
	\E_0 \left[ \int_{a/2}^a \psi(s) dL_s^0(\widehat{B}) \mid \widehat{B}_a=0 \right]
	&=
	\E_0 \left[ \int_0^{a/2} \psi(a-s) dL_s^0(\widehat{B}) \mid \widehat{B}_a=0 \right]
	\le \frac{{\modr \sqrt{2}}}{\sqrt{\bm{\pi}}}\int_0^{a/2} \frac{\psi(a-u)}{\sqrt{u}}du
	\\
	&=\frac{{\modr \sqrt{2}}}{\sqrt{\bm{\pi}}}\int_{a/2}^a \frac{\psi(u)}{\sqrt{a-u}}du.
	\end{align*}
	We deduce \eqref{E: moment LT int cond}.
	
	{\em Point 2.} We now prove \eqref{E: moment LT int cond xy}. Let us denote $\tau_0=\inf\{t \ge 0, \widehat{B}_t=0\}$ and remark that $g_1=\sup\{t \le 1, \widehat{B}_t=0\}$ is finite on the event $\tau_0<1$. Using that $\int_0^1\frac{dL_s^0(\widehat{B})}{s^\beta}=0$ on $\tau_0 \ge 1$, we have
	\begin{align} \nonumber
		\E_y \left[\int_0^1\frac{dL_s^0({\modarn \widehat{B}})}{s^\beta} \mid {\modarn \widehat{B}}_1=x\right]
		&=\E_y \left[\int_0^1\frac{dL_s^0({\modarn \widehat{B}})}{s^\beta} \Indi{\{\tau_0<1\}}
		 \mid {\modarn \widehat{B}}_1=x\right]	
		\\
		&=\E_y \left[
		\E_y\left[ 
		\int_{\tau_0}^{g_1} 
		\frac{dL_s^0({\modarn \widehat{B}})}{s^\beta} \mid \tau_0,g_1,{\modarn \widehat{B}}_1=x \right] \Indi{\{\tau_0<1\}} \mid {\modarn \widehat{B}}_1=x\right].	
		\label{E: maj TL cond by G}
	\end{align}
	Conditionally to $\tau_0$, the law of the process {\modarn $\displaystyle (\frac{{\modarn \widehat{B}}_{\tau_0+s}}{\sqrt{g_1-\tau_0}})_{s \in [0,g_1-\tau_0]}$} is on the event $\tau_0<1$, the law of a Brownian bridge from $0$ to $0$. Moreover, this Brownian bridge is independent of $g_1$ and of $({\modarn \widehat{B}}_{g_1+s})_{s \in [0,1-g_1]}$ {\modarn (see {\revar Section 3 of} Chapter XII in \cite{RevYor_3rd_edition})}. We deduce that, on $\tau_0<1$,
	\begin{align} \label{E: def G tau g}
		G(\tau,g):=&
		\E_y\left[
		\int_{\tau_0}^{g_1} 
		\frac{dL_s^0({\modarn \widehat{B}})}{s^\beta} \mid \tau_0=\tau,g_1=g,{\modarn \widehat{B}}_1=x \right]
		\\ \nonumber
		=&\E_0\left[ \int_{0}^{g-\tau} 
		\frac{dL_s^0({\modarn \widehat{B}})}{(\tau+s)^\beta}  \mid {\modarn \widehat{B}}_{g-\tau}=0\right].
	\end{align}
	Using \eqref{E: moment LT int cond}, we obtain
	\begin{align*}
			G(\tau,g)&\le \frac{{\modr \sqrt{2}}}{\sqrt{\bm{\pi}}} \int_0^{g-\tau} \frac{1}{(\tau+s)^\beta} \left(\frac{1}{\sqrt{s}}+
			\frac{1}{\sqrt{g-\tau-s}} \right) ds \\
			&{\modarn \le
				\frac{{\modr \sqrt{2}}}{\sqrt{\bm{\pi}}}
			\int_0^{\frac{g-\tau}{2}} \frac{2}{(\tau+s)^\beta\sqrt{s}} ds +
				\frac{{\modr \sqrt{2}}}{\sqrt{\bm{\pi}}}
			\int_{\frac{g-\tau}{2}}^{g-\tau} \frac{2}{(\tau+s)^\beta\sqrt{g-\tau-s}} ds }
		\\	
		&{\modarn \le
			\frac{{\modr \sqrt{2}}}{\sqrt{\bm{\pi}}}
			\int_0^{\frac{g-\tau}{2}} \frac{2}{(\tau+s)^\beta\sqrt{s}} ds +
			\frac{{\modr \sqrt{2}}}{\sqrt{\bm{\pi}}}
			\int_0^{\frac{g-\tau}{2}} \frac{2}{(g-s')^\beta\sqrt{s'}} ds', \quad \text{ setting $s'=g-\tau-s$.}}
		\end{align*}
{\modarn For $s' \le \frac{g-\tau}{2}$, we have $g-s' \ge \tau+s'$, and thus
\begin{equation*}
	G(\tau,g) \le
	\frac{4{\modr \sqrt{2}}}{\sqrt{\bm{\pi}}}
	\int_0^{\frac{g-\tau}{2}} \frac{1}{(\tau+s)^\beta\sqrt{s}} ds.
\end{equation*}
}
From this equation we can see that for $\beta\in[0,1/2)$, $G(\tau,g) \le {\modarn (4{\modr \sqrt{2}}/\sqrt{\bm{\pi}})} \int_0^1 s^{-1/2-\beta} ds \le C<\infty$ for some constant $C$. And for $\beta>1/2$, $G(\tau,g) \le  	\frac{4{\modr \sqrt{2}}}{\sqrt{\bm{\pi}}}
	\int_0^{\infty} \frac{ds}{(\tau+s)^\beta\sqrt{s}}  {\revar ~= \frac{4{\modr \sqrt{2}}}{\bm{\pi}} \int_0^\infty \frac{du}{(u+1)^\beta\sqrt{u}} \tau^{1/2-\beta}}\le
C \tau^{1/2-\beta}$. Collecting this with {\modarn \eqref{E: maj TL cond by G} and \eqref{E: def G tau g},} we deduce,
\begin{equation*}
	\E_y \left[\int_0^1\frac{dL_s^0({\modarn \widehat{B}})}{s^\beta} \mid {\modarn \widehat{B}}_1=x\right]
\le C \E_y \left[ (\tau_0)^{(1/2-\beta)\wedge 0} 1_{\tau_0 < 1} \mid {\modarn \widehat{B}}_1=x\right]. 	
\end{equation*}
If $\beta\in [0,1/2)$, we deduce $	\E_y \left[\int_0^1\frac{dL_s^0({\modarn \widehat{B}})}{s^\beta} \mid {\modarn \widehat{B}}_1=x\right] \le C$ and \eqref{E: moment LT int cond xy} follows in the case $\beta \in [0,1/2)$.

Assume now that $\beta>1/2$ and $y \neq 0$. We use that the Radon-Nikodym ratio between the law of the Brownian bridge and the Brownian motion restricted to the sigma field $\mathcal{F}_{\tau_0}$, and the event {\modarn $\tau_0<1$} is 
{\modarn $ \displaystyle \frac{\gaussian_{(1-\tau_0)(1{\modr+}\eta)}(x-\widehat{B}_{\tau_0})}{\gaussian_{1{\modr+}\eta}(x-y)}=
\frac{\gaussian_{(1-\tau_0)(1{\modr+}\eta)}(x)}{\gaussian_{1{\modr+}\eta}(x-y)}$,} we have {\modarn for any $0<\varepsilon\le1/2$,}
\begin{align} \nonumber
	\E_y \left[ \tau_0^{1/2-\beta} \Indi{\tau_0<1}\mid {\modarn \widehat{B}}_1=x\right]
	&\le \varepsilon^{1/2-\beta} + 	\E_y\left[ \tau_0^{1/2-\beta} \Indi{\tau_0<\varepsilon} \mid {\modarn \widehat{B}}_1=x \right]
	\\ \nonumber
	&\le \varepsilon^{1/2-\beta} + 	\E_y\left[ \tau_0^{1/2-\beta} \Indi{\tau_0<\varepsilon} \frac{\gaussian_{(1-\tau_0)(1{\modr+}\eta)}(x)}{\gaussian_{1-\eta}(x-y)}  \right]
	\\ \label{E: maj tau half-beta_cond}
	&\le \varepsilon^{1/2-\beta} + 	{\modr C}\E_y\left[ \tau_0^{1/2-\beta} \Indi{\tau_0<\varepsilon}   \right] \frac{\gaussian_{(1{\modr+}\eta)}(x)}{\gaussian_{1{\modr+}\eta}(x-y)}
\end{align}
where in the last line we used $\gaussian_{(1-\tau_0)(1{\modr+}\eta)}(x)=\frac{1}{\sqrt{2\bm{\pi}(1-\tau_0)(1-\eta)}}e^{-\frac{x^2}{2(1-\tau_0)(1{\modr+}\eta)}}\le C \gaussian_{(1{\modr+}\eta)}(x) $ {\modarn  for $\tau_0 \le \varepsilon \le 1/2$}.
On the one hand, we compute using the explicit expression for the law of the first hitting time of a Brownian motion 
{\revar (e.g. see page 107 in \cite{RevYor_3rd_edition}),}
\begin{align*}
\E_y\left[ \tau_0^{1/2-\beta} \Indi{\tau_0<\varepsilon}   \right] =
\int_0^\varepsilon t^{1/2-\beta} \frac{|y|e^{-\frac{y^2}{2t(1-\eta)}}}{t^{3/2}}dt
=\int_{\frac{y^2}{2\varepsilon(1-\eta)}}^\infty |y|^{1-2\beta} u^{\beta-1} e^{-u} du{\modarn\times (1{\modr+}\eta)^\beta},
\end{align*}
where we have set {\modarn $u=\frac{y^2}{2t(1{\modr+}\eta)}$.} Using that $\beta>0$, we deduce
\begin{equation} \label{E: maj tau half-beta}
\E_y\left[\tau_0^{1/2-\beta} \Indi{\tau_0<\varepsilon}   \right]  \le C |y|^{1-2\beta}
e^{-\frac{y^2}{4\varepsilon(1{\modr+}\eta)}}.
\end{equation}

On the other hand, $\frac{g_{1-\eta}(x)}{g_{1-\eta}(x-y)}=e^{-\frac{[2xy-y^2]}{2(1-\eta)}}$.
Using $2|xy|\le 2\eta(1{\modr+}\eta)x^2+\frac{y^2}{2\eta(1{\modr+}\eta)}$ we have
$ \frac{g_{1{\modr+}\eta}(x)}{g_{1{\modr+}\eta}(x-y)} \le e^{\eta x^2 + y^2 [\frac{1}{4\eta(1{\modr+}\eta)^2}+{\modarn {\frac{1}{2(1{\modr+}\eta)}}}]}$. Choosing $\varepsilon$ such that $\frac{1}{(1{\modr+}\eta)4\varepsilon}>[\frac{1}{4\eta(1{\modr+}\eta)^2}+{\modarn {\frac{1}{2(1{\modr+}\eta)}}}]+\frac{1}{\eta}$, we deduce from \eqref{E: maj tau half-beta_cond} and \eqref{E: maj tau half-beta} that
 \begin{equation*}
 		\E_y \left[ \tau_0^{1/2-\beta} \Indi{\tau_0<1}\mid {\modarn \widehat{B}}_1=x\right]
 		\le \varepsilon^{1/2-\beta} + C |y|^{1-2\beta} e^{\eta x^2-\frac{1}{\eta}y^2}.
 \end{equation*}
Hence \eqref{E: moment LT int cond xy} is proved in the case $\beta>1/2$.

{\em Point 3.} {\modarn By translation it is sufficient to consider the case $z=0$.} The proof follows the scheme of the Point 2. Exactly as we obtain \eqref{E: maj TL cond by G}--\eqref{E: def G tau g}, we prove
\begin{equation}\label{E: maj TL cond by G tilde}
\E_y\left[
\int_{{\modarn h_n^2}}^{\Delta_n} \frac{dL_s({\modarn \widehat{B}})}{s} \mid {\modarn \widehat{B}}_{\Delta_n}=x
\right]	=
\E_y\left[
\tilde{G}(\tau_0,g_{\Delta_n}) \Indi{\tau_0<{\modarn \Delta_n}} \mid {\modarn \widehat{B}}_{\Delta_n}=x
\right],
\end{equation}
where $g_{\Delta_n}=\sup\{s\in[0,\Delta_n];~{\modarn \widehat{B}}_t=0\}$ and
\begin{equation*}
	\tilde{G}(\tau,g)=
	\E_0 \left[ \int_{(h^2-\tau)\vee0}^{g-\tau} \frac{dL^0({\modarn \widehat{B}})}{(s+\tau)}
	\mid {\modarn \widehat{B}}_{g-\tau}=0
	\right].
\end{equation*}
{\revar Using \eqref{E: moment LT int cond} with $\psi(s)=\Indi{\{s\ge h^2-\tau\}}\frac{1}{s+\tau}$,} we have
\begin{align*}
	\tilde{G}(\tau,g) &\le \frac{1}{\sqrt{\pi}}
	\int_{(h^2-\tau)\vee0}^{g-\tau} \frac{ds}{(s+\tau)\sqrt{s}}
	+ \frac{1}{\sqrt{\pi}}
	\int_{(h^2-\tau)\vee0}^{g-\tau}\frac{ds}{(s+\tau)\sqrt{g-\tau-s}}
	\\
	& \le \frac{1}{\sqrt{\pi} \sqrt{\tau}} \int_{(\frac{h^2}{\tau}-1) \vee 0}^\infty \frac{du}{(u+1)\sqrt{u}}
	+
	\int_0^{g-(\tau\vee h^2)} 
	\frac{ds'}{(s'+(\tau\vee h^2))\sqrt{g-(\tau \vee h^2)-s'}}
	\\
	&
	{\revar
\le \frac{1}{\sqrt{\pi} \sqrt{\tau}} \int_{(\frac{h^2}{\tau}-1) \vee 0}^\infty \frac{du}{(u+1)\sqrt{u}}
+
\int_0^{g-(\tau\vee h^2)} 
\frac{ds'}{2\sqrt{s'} \sqrt{\tau\vee h^2}\sqrt{g-(\tau \vee h^2)-s'}}
}
\end{align*}
where in the first integral we have set $u=\tau h$, and in the second integral we made the change of variable $s'=s+\tau-h^2$ in the case $h^2>\tau$, and
$s'=s$ if $h^2 \le \tau$ {\revar and then used $2\sqrt{s'} \sqrt{\tau\vee h^2} \le s'+\tau\vee h^2$.} After some {\revar computations, 
	using $\int_0^\gamma \frac{ds}{\sqrt{s}\sqrt{\gamma-s}}=\int_0^1 \frac{du}{\sqrt{u}\sqrt{1-u}}$ for any $\gamma>0$,}
	and discussing according to the relative positions of $h^2$ and $\tau$, {\revar 
we can} get that $\tilde{G}(\tau,g) \le C/h$.

Then, the point 4 of the lemma follows from \eqref{E: maj TL cond by G tilde}. 
\end{proof}
{ \begin{lemma}\label{L: majo func Local time cond Delta}
	Assume that $\beta \ge 0$, $\beta \neq 1/2$ {\revarn and $\widetilde{B}$ is a B.M.  with 
		 diffusion coefficient $\sqrt{1{\modr+}\eta}\in{\modr(1,\sqrt{2}]}$.}
	Then, there exists $C_\eta$ such that for $\forall (x,y,z)\in \mathbb{R}^3$, $y\neq z$
	\begin{equation}
		\E_y \left[ \int_0^{\Delta_n} \frac{d L^z_s({\modarn \widetilde{B}})}{s^\beta}  \mid {\modarn \widetilde{B}}_{\Delta_n}=x\right] 
		\le C_\eta \Delta_n^{1/2 - \beta} 
		\left(
		1+ \abs{\frac{y-z}{\sqrt{\Delta_n}}}^{(1-2\beta)\land 0}
		e^{-\frac{(y-z)^2}{\eta {\modarn \Delta_n}}}	e^{\eta\frac{(x-y)^2}{{\modarn \Delta_n}}}
		\right).
	\end{equation}
	Moreover, for $\beta \in [0,1/2)$ the inequality holds true also for $y=z$.
\end{lemma}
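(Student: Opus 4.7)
The plan is to reduce the statement to the case $\Delta_n=1$ and $z=0$ by Brownian scaling and translation, and then invoke \eqref{E: moment LT int cond xy} from Lemma \ref{L: divers majo TL}.

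First I would introduce the rescaled process $\widehat{B}_t := \widetilde{B}_{\Delta_n t}/\sqrt{\Delta_n}$ for $t\in[0,1]$, which is again a Brownian motion with diffusion coefficient $\sqrt{1+\eta}$. The occupation time formula shows that, as measures in $s$, $L^a_s(\widetilde{B})=\sqrt{\Delta_n}\,L^{a/\sqrt{\Delta_n}}_{s/\Delta_n}(\widehat{B})$. The change of variables $s=\Delta_n u$ then gives
\begin{equation*}
\int_0^{\Delta_n}\frac{dL_s^z(\widetilde{B})}{s^\beta}=\Delta_n^{1/2-\beta}\int_0^1\frac{dL_u^{z/\sqrt{\Delta_n}}(\widehat{B})}{u^\beta}.
\end{equation*}
Translating by $z$, the process $\widehat{B}':=\widehat{B}-z/\sqrt{\Delta_n}$ is still a BM with coefficient $\sqrt{1+\eta}$, and $L_u^{z/\sqrt{\Delta_n}}(\widehat{B})=L_u^0(\widehat{B}')$. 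Conditioning on $\widetilde{B}_{\Delta_n}=x$ reduces to conditioning on $\widehat{B}'_1=(x-z)/\sqrt{\Delta_n}$, with starting point $(y-z)/\sqrt{\Delta_n}$.

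Applying \eqref{E: moment LT int cond xy} with $\tilde{y}:=(y-z)/\sqrt{\Delta_n}$, $\tilde{x}:=(x-z)/\sqrt{\Delta_n}$ and some auxiliary small parameter $\eta_0>0$ (in place of $\eta$ in that lemma) yields
\begin{equation*}
\E_{\tilde{y}}\!\left[\int_0^1\frac{dL_u^0(\widehat{B}')}{u^\beta}\,\Big|\,\widehat{B}'_1=\tilde{x}\right]\le C_{\eta_0}\bigl(1+|\tilde{y}|^{(1-2\beta)\wedge 0}e^{-\tilde{y}^2/\eta_0+\eta_0\tilde{x}^2}\bigr).
\end{equation*}
Multiplying by the scaling factor $\Delta_n^{1/2-\beta}$ recovers the prefactor in the target statement. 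For $y=z$ and $\beta\in[0,1/2)$, the bound reduces to $C\Delta_n^{1/2-\beta}$ using the corresponding case in \eqref{E: moment LT int cond xy}.

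The main obstacle is to absorb $e^{\eta_0\tilde{x}^2}$ into the stated bound $e^{-(y-z)^2/(\eta\Delta_n)}e^{\eta(x-y)^2/\Delta_n}$, i.e.\ to rewrite it in terms of $(\tilde{x}-\tilde{y})^2$ and $\tilde{y}^2$ while preserving a genuine Gaussian decay in $\tilde{y}$. Young's inequality gives $\tilde{x}^2\le(1+\kappa)(\tilde{x}-\tilde{y})^2+(1+\kappa^{-1})\tilde{y}^2$ for any $\kappa>0$, whence
\begin{equation*}
-\tilde{y}^2/\eta_0+\eta_0\tilde{x}^2\le\eta_0(1+\kappa)(\tilde{x}-\tilde{y})^2-\bigl[1/\eta_0-\eta_0(1+\kappa^{-1})\bigr]\tilde{y}^2.
\end{equation*}
Choosing $\eta_0=\eta_0(\eta)$ sufficiently small and $\kappa$ suitably (so that $\eta_0(1+\kappa)\le\eta$ and $1/\eta_0-\eta_0(1+\kappa^{-1})\ge1/\eta$) gives the required exponential bound. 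Returning to the original variables via $\tilde{x}-\tilde{y}=(x-y)/\sqrt{\Delta_n}$ and $\tilde{y}=(y-z)/\sqrt{\Delta_n}$, we obtain the stated estimate with a constant $C_\eta$ depending only on $\eta$.
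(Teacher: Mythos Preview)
Your approach is essentially the same as the paper's: Brownian scaling to reduce to $\Delta_n=1$, translation to reduce to $z=0$, and then an appeal to \eqref{E: moment LT int cond xy}. The paper's proof stops there, simply stating that ``the lemma follows in the case $z=0$ by using \eqref{E: moment LT int cond xy}'' and that the general $z$ is obtained by translation.

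You go one step further than the paper by noticing that \eqref{E: moment LT int cond xy} produces the exponent $\eta_0\tilde{x}^2$, whereas the target statement involves $\eta(\tilde{x}-\tilde{y})^2$; the paper glosses over this conversion entirely. Your Young-inequality argument to pass from one to the other is sound. The only point to be careful about is that in Lemma \ref{L: divers majo TL} the parameter $\eta$ appearing in the exponents of \eqref{E: moment LT int cond xy} is \emph{a priori} the same $\eta$ that fixes the diffusion coefficient $\sqrt{1+\eta}$, so it is not formally a free parameter $\eta_0$ that you may shrink. However, an inspection of the proof of \eqref{E: moment LT int cond xy} shows that the exponents there arise from a Young inequality and a choice of a cutoff $\varepsilon$, both of which can be tuned so that the bound holds with any prescribed small $\eta_0>0$ in place of $\eta$ (at the cost of enlarging the multiplicative constant). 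With that remark, your argument closes the gap and is in fact more explicit than the paper's own proof.
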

\begin{proof}
	If $({\modarn \widetilde{B}_{\revar u}})_{u \in [0,\Delta_n]}$ is a Brownian motion with 
	initial value $y$, we set
	${\modarn \widehat{B}_u= \Delta_n^{-1/2}{\modarn \widetilde{B}}_{u\Delta_n}}$ which is a Brownian motion 
	starting from $y\Delta_n^{-1/2}$. We have by change of variable,
	$$
	\int_0^{\Delta_n} \frac{dL^0_s({\modarn \widetilde{B}})}{s^\beta} = \Delta^{1/2-\beta}_n \int_0^1  \frac{dL^0_s({\modarn \widehat{B}})}{s^\beta}.
	$$
	We deduce
	$$
	\E_y\left[
	\int_0^{\Delta_n} 	\frac{d L^0_s({\modarn \widetilde{B}})}{s^\beta}  \mid {\modarn \widetilde{B}}_{\Delta_n}=x\right] 
	= \Delta_n^{1/2-\beta}	\E_{\frac{y}{\Delta_n^{1/2}}}\left[
	\int_0^1	\frac{d L^0_s({\modarn \widehat{B}})}{s^\beta}  \mid {\modarn \widehat{B}}_{1}=\frac{x}{\Delta_n^{1/2}}\right],
	$$
	and the {\modarn lemma} follows in the case $z=0$ by using \eqref{E: moment LT int cond xy} in Lemma \ref{L: divers majo TL}. The case $z \neq 0$ is obtained by translation.
\end{proof}
}

{\modrev
	\section{Malliavin calculus, some basic tools}
{	\revar
\label{S: Appendix Malliavin}	
}
	
	We have seen that the main idea to derive the lower bound in Theorem \ref{th: borne inf discrete} is to use Malliavin calculus technique to represent a score function in a treatable way. For this reason we introduce some basic facts on Malliavin calculus, needed for our computations. We refer to Nualart \cite{Nualart} for more details. \\
\subsection{Notations and basic properties}	 \label{s: Malliavin sub section recall}
	Consider a filtered probability space $(\Omega, \mathcal{F}, (\mathcal{F}_t), \mathbb{P})$, associated with a $1$- dimensional Brownian motion $\{ {\revarn B}_t, \, t\in [0, {\revar \Delta}] \}$ on a
	 finite interval $[0, {\revar \Delta}]$, {\revar where $0<\Delta\le 1$.} The underlying Hilbert space we consider is $H:= L^2([0, {\revar \Delta}], \R)$. For any $h \in H = L^2([0, {\revar \Delta}], \R)$ we denote as ${\revarn B}(h)$ the Ito integral $\int_0^{\revar \Delta} h(t) d{\revarn B}_t$. Moreover, we denote as {\revar $C^\infty_p(\R^m,\R)$} the set of all infinitely continuously differentiable functions $f: {\revar \R^m} \rightarrow \R$ such that $f$ and all its partial derivatives have polynomial growth. Let $\mathcal{S}$ denote the class of smooth random variables such that a random variable $F \in \mathcal{S}$ has the form 
	\begin{equation}{\label{eq: F in Malliavin}}
		\revar{ F = f({\revarn B}(h_1),\dots,{\revarn B}(h_m)),} 
	\end{equation}
	where $f$ belongs to {\revar $C^\infty_p(\R^m,\R)$} and $h_1,\dots,h_m \in H = L^2([0, {\revar \Delta}], \R)$ {\revar and $m\ge1$.} The derivative of a smooth random variable $F \in \mathcal{S}$ of the form \eqref{eq: F in Malliavin} is the $H$-valued random variable given by $DF = (D_t F)_{t \in [0, {\revar \Delta}]}$, 
	\begin{equation}
		\label{eq: Dt F in Malliavin} D_t F := {\revar \sum_{i=1}^m\frac{\partial f}{\partial x_{i}} ({\revarn B}(h_1),\dots,{\revarn B}(h_m)) h_i(t).}
	\end{equation}
	Roughly speaking, the scalar product $< DF, h>_H$ is the derivative at $\varepsilon$ of the random variable $F$ composed with the shifted process $\{ W(g) + \varepsilon \bracket{g}{h}  \, g \in H\}$. \\
	The following integration-by-parts formula holds true for any $F$ smooth random variable and $h \in H$: 
	$$\E[\bracket{DF}{h}] = \E[F {\revarn B}(h)].$$
	Moreover {\revarn by Chapter 1.2 in \cite{Nualart},} the operator $D$ is closable from $L^p(\Omega)$ to $L^p(\Omega, H)$ for any $p \ge 1$. We denote the domain of $D$ in $L^p(\Omega)$ by $\mathbb{D}^{1, p}(\R)$, which means that $\mathbb{D}^{1, p}(\R)$ is the closure of the class of smooth random variables $\mathcal{S}$ with respect to the norm 
	$$\left \| F \right \|_{\mathbb{D}^{1, p}(\R)} := (\E[|F|^p] + \E[ \left \| D F \right \|_H^p])^{\frac{1}{p}}.$$
	{\revar A crucial property is the chain rule formula for the Malliavin derivative (Proposition 1.2.3. in \cite{Nualart}). If $\varphi: \mathbb{R}\to \mathbb{R}$ is $\mathcal{C}^1$ with a bounded derivative and $F \in \mathbb{D}^{1,p}(\R)$, $p \ge 1$, then $\varphi(F) \in  \mathbb{D}^{1,p} (\R)$ and 
		\begin{equation}
			\label{eq: chain rule Malliavin}
			D_t \varphi(F)=\varphi'(F) D_t F.
		\end{equation}
	 It is possible to define higher order derivatives. The second order derivative $D^2_{s,t}F$ of the simple functional $F$ is obtained by differentiating the expression \eqref{eq: Dt F in Malliavin}, considering that $D_tF$ is a simple functional taking values in $H$, which yields to the expression
		\begin{equation*}
			D^2_{s,t} F := {\revar \sum_{1\le i,j\le m}\frac{\partial^2 f}{\partial x_{i}x_j} ({\revarn B}(h_1),\dots,{\revarn B}(h_m)) h_i(t)h_j(s).}
		\end{equation*}
		By iteration, the derivative $D^k_{s1,\dots,s_k} F$ is defined for a simple functional $F \in \mathcal{S}$ and the operator $D^k$ from $\mathcal{S}\subset L^p(\Omega) $ to $L^p(\Omega,H^{\otimes k})$ is closable and can be extended from $\mathcal{S}$ to $\mathbb{D}^{k,p}(\R)$ by closure under the norm defined by
		\begin{equation*}
			\norm{F}_{\mathbb{D}^{k,p}(\R)}^p:= \E[\abs{F}^p] + \sum_{j=1}^k \E[ \norm{D^j F}_{H^{\otimes j}}^p ]=\E[\abs{F}^p] + \sum_{j=1}^k \E \left[ \left( \int_{[0,{\revar \Delta}]^j} |D^j_{s_1,\dots,s_j}F|^2 ds_1\dots ds_j\right)^{p/2}\right].
		\end{equation*}
		The space of infinitely differentiable variables, in the Malliavin sense, is defined by $\mathbb{D}^\infty (\R)=\cap_{k\ge1}\cap_{p\ge1} \mathbb{D}^{k,p}(\R)$.}
	
	We can now introduce the divergence operator, defined as the adjoint of the derivative operator. As the underlying Hilbert space $H$ is an $L^2$ space we interpret the divergence operator as a stochastic integral and we call it Skorohod integral. Indeed, in the Brownian motion case, it coincides with the generalization of the Ito stochastic integral to anticipating integrands introduced by Skorohod \cite{Sko}.
	\begin{definition} \label{D: Skorohod} 
		The Skorohod integral $\delta$ is a linear operator on $L^2(\Omega, H)$ with values in $L^2(\Omega)$ such that
		\begin{enumerate}
			\item The domain of $\delta$, denoted by $Dom(\delta)$, is the set of $H$-valued square integrable random variables $u \in L^2(\Omega, H)$ such that
			$$|\E[\bracket{ DF}{ u }]| = |\E[\int_0^{\revar \Delta} D_tF u_t dt]| \le c \left \| F \right \|_2 $$
			for all $F \in \mathbb{D}^{1, 2}$; where $c$ is some constant depending on $u$. 
			\item If $u$ belongs to $Dom (\delta)$, then $\delta(u)$ is the element of $L^2(\Omega)$ characterized by
			\begin{equation}{\label{eq: m2}}
				\E[F \delta(u)] = \E[\bracket{ DF}{ u }] (= \E[\int_0^{\revar \Delta} D_tF  u_t dt])
			\end{equation}
			for any $F \in \mathbb{D}^{1, 2}(\R)$. 
		\end{enumerate}
	\end{definition}
	We observe that, taking $F= 1$ in \eqref{eq: m2}, we obtain $\E[\delta(u)]=0$ for any $u \in Dom(\delta)$. }	

	{\revar It is possible to extend the definition of the Malliavin derivative for $H$-valued variables $u$, in a way analogous to the case of real valued random variables $F$. This enables to define a set $\mathbb{D}^{k,p}(H)\subset L^{p}(\Omega,H)$ of $H$-valued variables admitting $k$ derivatives in $L^p$, endowed with the norm
		\begin{align*}
			\norm{u}_{\mathbb{D}^{k,p}(H)}^p&= \E[\norm{u}_{H}^p] + \sum_{j=1}^k \E[ \norm{\norm{D^j u}_{H}}_{H^{\otimes j}}^p ]
			=\E[\norm{u}_H] + \sum_{j=1}^k \E \left[ \left( \int_{[0,{\revar \Delta}]^{j+1}} |D^j_{s_1,\dots,s_j}u_r|^2 drds_1\dots ds_j\right)^{p/2}\right]
			\\
			&=\norm{u}_{L^p(\Omega,H)}^p + \sum_{j=1}^k \norm{D^j u}_{L^p(\Omega,H^{\otimes (j+1)})}^p.
		\end{align*}
		The following proposition gives a smoothness criteria on $u$ which ensures that $u \in Dom(\delta)$, and provides useful formulae.
	}  
{\modrev
	\begin{proposition}{\label{prop: properties Skorohod operator}}
		\begin{enumerate}
			\item For any $p > 1$ the space of the {\revar Malliavin differentiable variables $\mathbb{D}^{1, p}(H)$ is included in} $Dom(\delta)$ and it is 
			$$\left \| \delta(u) \right \|_p 
			{\revar	\le c_p \norm{u}_{\mathbb{D}^{1,p}(H)}}.$$
			{\revar More generally, the operator $\delta$ is bounded from $ \mathbb{D}^{k, p}(H) $ to  $\mathbb{D}^{k-1, p}(\R)$, for $k\ge 1$, $p>1$.}
			\item If $u$ is an adapted process belonging to $L^2([0, {\revar \Delta}] \times \Omega, \R)$, then the Skorohod integral coincides with the Ito integral, i.e.
			$$\delta(u) = \int_0^\Delta u_t d{\revarn B}_t.$$
			\item If $F$ belongs to $\mathbb{D}^{1, 2}$ then for any $u \in Dom(\delta)$ for which $\E[F^2 \int_0^{\revar \Delta} u_t^2 dt] < \infty$, it is 
			$$\delta(Fu) = F \delta(u) - \int_0^{\revar \Delta} D_tF u_t dt$$
			whenever the right hand side belongs to $L^2(\Omega)$. 
		\end{enumerate}
	\end{proposition}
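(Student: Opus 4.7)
The plan is to prove the three assertions in order, establishing duality and continuity properties first and then deducing the product rule and the identification with the It\^o integral. All three statements are classical; the strategy I would follow mirrors Chapter 1 of Nualart \cite{Nualart}, and I would treat smooth elementary processes first before invoking closure arguments.

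For the first point, I would start with $p=2$. Given $u\in\mathbb{D}^{1,2}(H)$ I would represent $u_t = \sum_n I_n(f_n(\cdot, t))$ via chaos decomposition and compute
\begin{equation*}
\E[\delta(u)^2] = \E[\norm{u}_H^2] + \E\Big[\int_0^\Delta\int_0^\Delta D_s u_t\, D_t u_s\, ds\, dt\Big],
\end{equation*}
which is bounded by $\norm{u}_{\mathbb{D}^{1,2}(H)}^2$ by Cauchy--Schwarz on the second term. Verifying the inclusion $\mathbb{D}^{1,2}(H)\subset\text{Dom}(\delta)$ amounts to checking that $F\mapsto \E[\langle DF, u\rangle_H]$ is $L^2$-continuous, which follows from this same bound. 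The extension to general $p>1$ is the main technical obstacle: it requires Meyer's inequalities, which I would prove (or invoke) via the hypercontractivity of the Ornstein--Uhlenbeck semigroup to control $\norm{\delta(u)}_p$ by $\norm{u}_{\mathbb{D}^{1,p}(H)}$. The higher order boundedness $\delta:\mathbb{D}^{k,p}(H)\to\mathbb{D}^{k-1,p}(\R)$ follows by iterating, using the commutation relation $D_t\delta(u) = u_t + \delta(D_t u)$.

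For the third point (which I would do before the second, as it is used there), I take $F\in\mathcal{S}$ smooth and $u$ a smooth elementary $H$-valued process. For any test functional $G\in\mathcal{S}$, the defining duality gives
\begin{align*}
\E[G\, \delta(Fu)] &= \E[\langle DG, Fu\rangle_H] = \E[\langle D(FG), u\rangle_H] - \E[\langle G\, DF, u\rangle_H]\\
&= \E[FG\, \delta(u)] - \E\Big[G \int_0^\Delta D_t F\, u_t\, dt\Big],
\end{align*}
where I used the Leibniz rule $D(FG)=FDG + GDF$. Since $G$ is arbitrary, this identifies $\delta(Fu)=F\delta(u)-\langle DF, u\rangle_H$ on smooth elements. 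Extension to $F\in\mathbb{D}^{1,2}$ and $u\in\text{Dom}(\delta)$ with the stated integrability follows by approximation and the closability already established.

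For the second point, I would take an elementary adapted process $u_t = F\,\mathbbm{1}_{(a,b]}(t)$ with $F$ smooth and $\mathcal{F}_a$-measurable. By the product rule from part 3 applied to $u = F\cdot \mathbbm{1}_{(a,b]}$ together with the fact that $D_t F = 0$ for $t>a$ (since $F$ is $\mathcal{F}_a$-measurable, a standard consequence of the definition of $D$ on Wiener chaos), one gets $\langle DF, \mathbbm{1}_{(a,b]}\rangle_H = 0$ and hence $\delta(u) = F\,\delta(\mathbbm{1}_{(a,b]}) = F(B_b-B_a) = \int_0^\Delta u_t\, dB_t$. The general adapted case follows by density of simple adapted processes in $L^2([0,\Delta]\times\Omega)$ and the $L^2$ continuity of both $\delta$ (from part 1) and the It\^o integral. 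The most delicate point to verify rigorously is the orthogonality $D_t F = 0$ for $t$ after the $\mathcal{F}$-measurability time of $F$, which is itself a statement I would first check on smooth adapted functionals and then extend by closure.
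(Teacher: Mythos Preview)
Your sketch is correct and follows the standard route from Nualart's Chapter 1. Note, however, that the paper does not prove this proposition at all: immediately after the statement it simply records that the first point is Proposition 1.5.7 of \cite{Nualart}, the second is Proposition 1.3.11 of \cite{Nualart}, and the third is Proposition 1.3.3 of \cite{Nualart}. The proposition is placed in a ``basic tools'' appendix and is treated as a recall, not as something requiring proof in the paper. So your proposal goes well beyond what the paper does; a one-line citation would have sufficed here.
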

{\revarn The first point in the proposition above can be found in Proposition 1.5.7 of \cite{Nualart}, the second point in Proposition 1.3.11 of \cite{Nualart}, and the third point in Proposition 1.3.3. \cite{Nualart}.}

	{\revar We recall that solutions of {\modr SDE} with smooth coefficients are smooth variables in the Malliavin sense. Assume that $X$ is solution to the {\modr SDE}
		\eqref{eq: model}, that the coefficients $a$, $b$, are $\mathcal{C}^\infty$ and that $a^{(k)}$, $b^{(k)}$ are bounded functions for any $k \ge 1$. Then, by Theorem 2.2.2 in \cite{Nualart}, $X_t\in\mathbb{D}^\infty(\R)=\cap_{p\ge 1, k \ge 1} \mathbb{D}^{k,p}(\R)$ and we have
		\begin{equation} \label{eq: control Malliavin SDE}
			\sup_{(r_1,\dots,r_k) \in [0,{\revar \Delta}]^k}\E\left[
			\sup_{0\le t \le {\revar \Delta}} \abs{ D^k_{r_1,\dots,r_k}(X_t) }^p \right] \le c_{p,\Delta,k}
		\end{equation}
		where the constant $c_{p,{\revar \Delta},k}$, for bounded $\Delta$, is upper bounded by a constant $c_{p,k}$ that only depends on $p$, $\abs{a(0)}$, $\abs{b(0)}$, $\sup_{l=1,\dots,k} \norm{a^{(l)}}_\infty$, and $\sup_{l=1,\dots,k} \norm{b^{(l)}}_\infty$. The control \eqref{eq: control Malliavin SDE} is the property (P1) in the proof of  Theorem 2.2.2 in \cite{Nualart}, where the dependence of $c_{p,T,k}$ on the first $k$ derivatives of the coefficient is a consequence of the expression (P2) in the proof of Theorem 2.2.2. in \cite{Nualart}.
	}
}
{\revar 
\subsection{Proof of formula \eqref{eq : Score by Malliavin}} \label{Ss: proof of score by Malliavin}
We follow closely the proof of Theorem 5 in \cite{Glo_Gob}, in our simpler one-dimensional situation, and relying on the properties recalled in Section \ref{s: Malliavin sub section recall}. As the coefficients of the {\modr SDE} \eqref{eq: model epsilon} are $\mathcal{C}^\infty$ we know that $(x,y,\epsilon)\mapsto p_{\Delta_n}^\epsilon(x,y)$ is smooth.
 For $\varphi : \mathbb{R} \to \mathbb{R}$ a $\mathcal{C}^\infty$ bounded function, we differentiate with respect to $\epsilon$ the relation 
$	\E_{x_0}[\varphi(X^\epsilon_{\Delta_n})]= \int_\mathbb{R} p^\epsilon_{\Delta_n}(x_0,y)\varphi(y) dy$. It yields to
$\E_{x_0}[\varphi'(X^\epsilon_{\Delta_n})\dot{X}_{\Delta_n}^\epsilon]= \int_\mathbb{R} \dot{p}^\epsilon_{\Delta_n}(x_0,y)\varphi(y) dy$. We now use the chain rule formula \eqref{eq: chain rule Malliavin} to write $\varphi'(X^\epsilon_{\Delta_n})=\frac{\bracket{D_\cdot(\varphi(X^\epsilon_{\Delta_n}))}{D_\cdot X^\epsilon_{\Delta_n}}}{\bracket{X^\epsilon_{\Delta_n}}{X^\epsilon_{\Delta_n}}}$ and thus
\begin{equation*}
\E_{x_0}[\varphi'(X^\epsilon_{\Delta_n})\dot{X}_{\Delta_n}^\epsilon]=
\E_{x_0}\left[\frac{\bracket{D_\cdot(\varphi(X^\epsilon_{\Delta_n}))}{D_\cdot X^\epsilon_{\Delta_n}}}{\bracket{X^\epsilon_{\Delta_n}}{X^\epsilon_{\Delta_n}}}
\dot{X}_{\Delta_n}^\epsilon\right]	
=\E_{x_0}\left[
\bracket{ D_\cdot(\varphi(X^\epsilon_{\Delta_n})) }
{ \frac{D_\cdot X^\epsilon_{\Delta_n}\dot{X}_{\Delta_n}^\epsilon}{\bracket{D_\cdot X^\epsilon_{\Delta_n}}{D_\cdot X^\epsilon_{\Delta_n}}}}\right].
\end{equation*}
We now apply \eqref{eq: m2}, with $u_\cdot= \frac{D_\cdot X^\epsilon_{\Delta_n} \dot{X}^\epsilon_{\Delta_n} }{\bracket{DX^\epsilon_\cdot}{DX^\epsilon_\cdot}}$, and deduce $\E_{x_0}[\varphi'(X^\epsilon_{\Delta_n})\dot{X}_{\Delta_n}^\epsilon]=\E_{x_0}[\varphi(X^\epsilon_{\Delta_n}) W_{x_0,\Delta_n,\epsilon}]$ with $W_{x_0,\Delta_n,\epsilon}$ given by \eqref{eq: Malliavin weight}. This is possible, as we show in the proof of Lemma \ref{L: maj grossiere Malliavin} that $u \in \mathbb{D}^{1,4}(H)$ which implies $u\in Dom(\delta)$. Now, by conditioning on $X^\varepsilon_{\Delta_n}$, we get
\begin{equation*}
\int_\mathbb{R} \dot{p}^\epsilon_{\Delta_n}(x_0,y)\varphi(y) dy=\E_{x_0}[\varphi(X^\epsilon_{\Delta_n}) W_{x_0,\Delta_n,\epsilon}]=\int_\mathbb{R} p^\epsilon_{\Delta_n}(x_0,y)\E_{x_0}[W_{x_0,\Delta_n,\epsilon} \mid X_{\Delta_n}^\epsilon=y]\varphi(y)dy .
\end{equation*}
The equation \eqref{eq : Score by Malliavin} follows.
\qed
}

\end{appendix}

\end{document}